\documentclass[12pt]{amsart}
\usepackage{latexsym}
\usepackage{amssymb, amsmath,mathrsfs,amsthm}
\usepackage{geometry}
\usepackage[OT2,T1]{fontenc}
\usepackage{setspace}
\usepackage{mathtools}

\usepackage{enumerate}
\usepackage{float, subfigure}
\usepackage[pagebackref,hypertexnames=false, colorlinks, citecolor=red, linkcolor=red]{hyperref}

\newtheorem{theorem}{Theorem}[section]
\newtheorem*{theorem*}{Theorem}
\newtheorem*{corollary*}{Corollary}
\newtheorem{lemma}[theorem]{Lemma}

\newtheorem{conjecture}[theorem]{Conjecture}
\newtheorem{corollary}[theorem]{Corollary}
\newtheorem{proposition}[theorem]{Proposition}
\usepackage{xcolor}

\DeclareSymbolFont{cyrletters}{OT2}{wncyr}{m}{n}
\DeclareMathSymbol{\Sha}{\mathalpha}{cyrletters}{"58}

\theoremstyle{remark}
\newtheorem{remark}[theorem]{Remark}
\newtheorem{example}[theorem]{Example}
\usepackage{chngcntr}
\counterwithin{figure}{section}
\numberwithin{equation}{section}

\begin{document}
\raggedbottom

\subjclass[2020]{Primary 47B32; 47A13; Secondary 47A12; 47B35; 32A08}
\keywords{Model spaces, Compressed shifts, Numerical range, Bidisk, Rational inner functions}

\title[Two-Variable Compressions of Shifts]{Two-Variable Compressions of Shifts, Toeplitz Operators, and Numerical Ranges}
	\date{\today}
\author{Kelly Bickel$^\dagger$, Katie Quertermous, and Matina Trachana}

\address{Kelly Bickel, Department of Mathematics, Bucknell University, 380 Olin Science Building, Lewisburg, PA 17837, USA.}
\email{kelly.bickel@bucknell.edu}
\address{Katie Quertermous, Department of Mathematics and Statistics, James Madison University, 60 Bluestone Dr. MSC 1911, Harrisonburg, VA 22807, USA.}
\email{querteks@jmu.edu}

\address{Matina Trachana, Department of Mathematics and Informatics, University of Cagliari, Via Ospedale 72, 09124, Cagliari, Italy.}
\email{mtrachana@outlook.com}

\thanks{$\dagger$ Research supported in part by National Science Foundation DMS grant \#2000088.}

\maketitle
\begin{abstract}
This paper studies two-variable compressions of shifts associated to rational inner functions on the bidisk; these
generalize the classical compressions of the shift associated to finite Blasckhe products and are unitarily equivalent to one variable matrix-valued Toeplitz operators. This paper proves that a rational inner function is almost completely determined by these Toeplitz operator symbols but provides examples showing that (unlike in the one-variable case) rational inner functions are not determined by the numerical ranges of their compressed shifts. This paper also investigates related questions including methods of constructing these compressed-shift Toeplitz operators and when the associated numerical ranges are open and closed. 
\end{abstract}


\section{Introduction}

\subsection{One-variable setting} Consider the set of finite Blaschke products $B$, which are rational functions of the form
\[ B(z) = \lambda \prod_{j = 1}^n \frac{z - a_j}{1 - \overline{a_j} z},\]
where $|\lambda|=1$ and  $|a_j| <1$ for $j=1, \dots, n$. This class of functions acts roughly like polynomials on the unit disk $\mathbb{D} : = \{ z\in \mathbb{C}: |z|<1\}$ and plays a significant role in function theory on $\mathbb{D}$, for example regarding problems involving interpolation, approximation, and factorization, see \cite{gmr18, Garnett81}. They are also canonical examples of inner functions, namely holomorphic functions on $\mathbb{D}$ whose radial boundary values on the unit circle $\mathbb{T} :=\{ \tau \in \mathbb{C}: |\tau| =1\}$ have absolute value $1$ almost everywhere. In this paper, we are interested in associated operators called \emph{compressions of shifts}.

Specifically,  let $H^2(\mathbb{D})$ denote the classical Hardy space on the unit disk, namely the Hilbert space of holomorphic functions on $\mathbb{D}$ satisfying
\[ f(z) = \sum_{n=0}^{\infty} a_{n} z^n \text{ with }  \| f\|_{H^2(\mathbb{D})}^2  := \sum_{n=0}^\infty |a_{n}|^2 < \infty,\]
and let $S$ denote the shift operator on $H^2$ given by $[Sf](z) = z f(z)$. For each finite Blaschke product $B$ (and actually each inner function $\theta$), one can define the associated model space $K_B:=  H^2(\mathbb{D}) \ominus B H^2(\mathbb{D})$ and its compression of the shift
\[ S_B := P_B S |_{K_B},\]
where $P_{B}$ is the orthogonal projection from $H^2$ onto $K_B$. The study of compressions of the shift has received significant attention, partially due to the fact that these act as ``model operators'' for a large class of operators and partially because they satisfy some surprising but beautiful properties, see \cite{DGSV1, gmr16, gmr18, SBK10}. 

To see an example of such a property, recall that for any bounded operator $A$ on a Hilbert space $\mathcal{H}$, its numerical range $W(A)$ and numerical radius $w(A)$ are defined as follows: 
\[W(A) = \{\langle Ax, x\rangle: x \in \mathcal{H}, \|x\| = 1\} \ \text{ and } \  w(A) = \sup_{z\in W(A)} |z|.\]
The numerical range of an operator shows up in a variety of applications and both approximates the operator's spectrum and encodes additional information about the operator; for example an operator $A$ is self-adjoint if and only if $W(A) \subseteq \mathbb{R}$. See \cite{gr97} for additional properties and uses of numerical ranges.

For compressions of shifts, Gau and Wu in \cite{gw98} and Mirman in \cite{m98} showed that for each finite Blaschke product $B$ with $\deg B =n$, the numerical range $W(S_B)$ satisfies a beautiful geometric condition called the Poncelet property: for each point $\lambda \in \mathbb{T}$, there is an $(n+1)$-gon inscribed in $\mathbb{T}$ that circumscribes the boundary of $W(S_B)$ and has $\lambda$ as a vertex. This result and related techniques imply further properties of $W(S_B)$, for example that its closure is exactly the intersection of the numerical ranges of its unitary-$1$ dilations.

Further, because $\dim K_B = \deg B$, one can also view $S_B$ as a matrix. Specifically, the matrix representation of $S_B$ with respect to an orthonormal basis called the Takenaka-Malmquist-Walsh basis is exactly this $n\times n$, upper-triangular matrix, see \cite{gmr18, gw03}:
{\small \begin{equation} \label{eqn:SB} M_B = \left[\begin{array}{cccc}
a_1 & \sqrt{1 - |a_1|^2}\sqrt{1 - |a_2|^2} & \ldots & (\prod_{k = 2}^{n -
1}( -\overline{a_k})) \sqrt{1 - |a_1|^2}\sqrt{1 - |a_n|^2}\\ &&&\\ 0&a_2 &
\ldots & (\prod_{k = 3}^{n - 1}( -\overline{a_k})) \sqrt{1 -
|a_2|^2}\sqrt{1 - |a_n|^2}\\ &&&\\ \ldots & \ldots &\ldots &\ldots \\
&&&\\ 0 & 0 &0   & a_n
\end{array} \right].\end{equation}} 

From $\eqref{eqn:SB}$, we can see that $M_B$ only depends on the zeros of $B$, and the zeros of $B$ are exactly its eigenvalues. From this, one can deduce that if $B_1, B_2$ are finite Blaschke products, then $M_{B_1}$ will be unitarily similar to $M_{B_2}$ if and only if $B_1$ and $B_2$ have the same zero sets.  More generally, 
the following result encodes how uniquely the operator $S_B$ and its numerical range $W(S_B)$ depend on the choice of finite Blaschke product $B$:

\begin{theorem} \label{thm:unique1} Let $B_1$, $B_2$ be finite Blaschke products. Then the following are equivalent:
\begin{itemize}
\item[(i)]  $B_1= \lambda B_2$ for some $\lambda \in \mathbb{T}$;
\item[(ii)]  $S_{B_1}$ and $S_{B_2}$ are unitarily equivalent;
\item[(iii)] $W(S_{B_1}) = W(S_{B_2})$;
\item[(iv)] $\deg B_1 = \deg B_2$ and $W(S_{B_1}) \subseteq W(S_{B_2})$.
\end{itemize}
\end{theorem}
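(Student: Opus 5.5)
We will prove the chain (i)$\Rightarrow$(ii)$\Rightarrow$(iii)$\Rightarrow$(iv)$\Rightarrow$(i).

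\textbf{(i)$\Rightarrow$(ii).} If $B_1=\lambda B_2$ with $\lambda\in\mathbb{T}$, then $B_1H^2=B_2H^2$. Hence $K_{B_1}=K_{B_2}$ and $P_{B_1}=P_{B_2}$, so $S_{B_1}=S_{B_2}$; in particular they are unitarily equivalent.

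\textbf{(ii)$\Rightarrow$(iii).} Numerical ranges are invariant under unitary equivalence.

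\textbf{(iii)$\Rightarrow$(iv).} We only need $\deg B_1=\deg B_2$. By Gau--Wu and Mirman, $W(S_B)$ is a compact convex set whose boundary has the $(n+1)$-Poncelet property, where $n=\deg B$. So it suffices to show that a convex set cannot be $(n+1)$- and $(m+1)$-Poncelet for $n\neq m$. The degenerate cases are handled directly:
\begin{itemize}
\item $n=1$: $W(S_B)$ is a single point $a_1$.
\item $n\ge 2$: $W(S_B)$ has nonempty interior. Indeed, $M_B$ is not normal, since its off-diagonal entries $\sqrt{1-|a_i|^2}\sqrt{1-|a_{i+1}|^2}$ are nonzero.
\end{itemize}
For $n,m\ge2$, take the boundary curve $C$ with its Poncelet map on $\mathbb{T}$. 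Start at $\lambda\in\mathbb{T}$ and draw successive tangent chords to $C$, moving counterclockwise. The rotation number of this circle map is determined by $C$. The $(n+1)$-Poncelet property says every orbit closes after $n+1$ steps with winding number one, so the rotation number is $1/(n+1)$. This forces $n=m$.

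Alternatively, we can avoid the rotation-number argument by recovering $n$ from the boundary behaviour of $W$. Every support line of $W(S_B)$ meets it exactly as the supporting line of the curve associated to the degree-$(n-1)$ algebraic Kippenhahn curve of $M_B$. The Poncelet structure is the cleaner route, and I will use it.

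\textbf{(iv)$\Rightarrow$(i).} This is the main step. Let $n=\deg B_1=\deg B_2$ and assume $W(S_{B_1})\subseteq W(S_{B_2})$. Each Poncelet $(n+1)$-gon $P_\lambda$ of $B_2$ has vertex $\lambda$ and the remaining vertices at the solutions of $B_2(z)=\mu$ other than $\lambda$ (suitably normalized). Its sides are tangent to $\partial W(S_{B_2})$. Since $W(S_{B_1})$ lies inside $W(S_{B_2})$, it lies in the closed polygon $P_\lambda$. On the other hand, $W(S_{B_1})$ is inscribed in its own $(n+1)$-gon $Q_\lambda$ with vertex $\lambda$, and the sides of $Q_\lambda$ are tangent to $\partial W(S_{B_1})$.

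The plan is to compare the two polygons sharing the vertex $\lambda$. The two sides of $P_\lambda$ at $\lambda$ are tangent to the larger set, so they lie ``outside'' those of $Q_\lambda$. Following the polygon around, a monotonicity argument on the chord map shows the vertices of $Q_\lambda$ advance no faster than those of $P_\lambda$. But both polygons must close up at $\lambda$ after $n+1$ steps. This forces $P_\lambda=Q_\lambda$ for every $\lambda$.

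Hence the tangent lines coincide for every $\lambda$, so $\partial W(S_{B_1})=\partial W(S_{B_2})$, the envelope of these chords. The vertex sets $\{z: B_j(z)=B_j(\lambda)\}$ then agree for all $\lambda$. This means $B_1$ and $B_2$ have the same level-set partitions of $\mathbb{T}$, which gives $B_1=\varphi\circ B_2$ for a degree-one circle map $\varphi$.

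Finally we pin down $\varphi$. Equality of the numerical ranges gives equality of the Blaschke curves and their foci. The foci of the Kippenhahn curve are the eigenvalues $a_j$, i.e.\ the zeros. So the two products have the same zeros, and hence $\varphi$ is a rotation, which is (i).

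I expect the monotonicity/closing argument, and the passage from equal numerical ranges to equal zeros via foci, to be the main obstacles.
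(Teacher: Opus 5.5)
You have the right overall chain. Steps (i)$\Rightarrow$(ii)$\Rightarrow$(iii), the rotation-number argument for $\deg B_1=\deg B_2$, and the comparison of chord maps in (iv)$\Rightarrow$(i) are sound in outline. On the last of these, you have the direction reversed. The chord tangent to the \emph{smaller} set $W(S_{B_1})$ cuts off a larger cap, so its vertices advance at least as fast, not ``no faster''. This is harmless: both lifted chord maps are strictly increasing and send $x\mapsto x+1$ after $n+1$ steps, so a strict inequality at one point is impossible and the two maps coincide.

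The genuine gap is the passage from ``$P_\lambda=Q_\lambda$ for all $\lambda$'' to $B_1=\lambda B_2$. First, the vertices are misidentified. The Poncelet $(n+1)$-gon through $\lambda$ has as vertices the $n+1$ roots of $zB(z)=\lambda B(\lambda)$, which are the eigenvalues of a unitary $1$-dilation of $S_B$. The set $\{z: B(z)=B(\lambda)\}$ has only $n$ points. So what you actually get is that $b_j(z):=zB_j(z)$, $j=1,2$, have the same fibres on $\mathbb{T}$. That gives $b_1=\varphi\circ b_2$ for some homeomorphism $\varphi$ of $\mathbb{T}$, and a priori nothing more.

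Second, your way of pinning down $\varphi$ does not work. The eigenvalues of $M_B$ are the foci of the \emph{entire} Kippenhahn curve of $M_B$, but $W(S_B)$ only records its outer boundary. For $n\ge 3$ the curve has inner branches, coming from the non-extreme eigenvalues of $\mathrm{Re}(e^{-i\theta}M_B)$, which $W(S_B)$ does not see, and the curve may be reducible. For $B(z)=z^3$ it is the circle of radius $\tfrac{\sqrt2}{2}$ together with the point $0$. So ``equal numerical ranges give equal foci'' is essentially the statement being proved; it is immediate only for $n=2$, where $W$ is an ellipse with foci $a_1,a_2$. Had it been available, it would give the zeros directly and make the polygon comparison unnecessary.

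The missing ingredient is that $\varphi$ extends to a disc automorphism. One way to get this:
\begin{itemize}
\item[1.] Set $g(w):=\frac{1}{n+1}\sum_{b_2(z)=w} b_1(z)$, with preimages counted with multiplicity. It is holomorphic on $\mathbb{D}$ by the removable singularity theorem, continuous on $\overline{\mathbb{D}}$, and equal to $\varphi$ on $\mathbb{T}$.
\item[2.] Hence $g$ is a finite Blaschke product whose boundary map is a homeomorphism, so $g$ is a disc automorphism.
\item[3.] The maximum principle gives $g\circ b_2=b_1$.
\item[4.] Since $b_1(0)=b_2(0)=0$, we get $g(0)=0$, so $g(w)=\mu w$ and $B_1=\mu B_2$.
\end{itemize}
Alternatively, the points where the sides of a single polygon touch $\partial W$ determine the weights of the unitary dilation, and hence the characteristic polynomial of $M_{B_1}$.

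For reference, the paper does not prove these equivalences itself. It reads off (i)$\Leftrightarrow$(ii) from \eqref{eqn:SB} and cites \cite[Theorem 4.1 and Lemma 4.2]{gw98} for (iii) and (iv).
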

The equivalence of $(i)$ with $(ii)$ follows immediately from the matrix representation of $S_B$ in \eqref{eqn:SB}. The more subtle equivalencies of $(i)$ with $(iii)$ and $(iv)$ are due to Gau 
and Wu, see \cite[Theorem 4.1 and Lemma 4.2]{gw98}.
In general, square matrices $M_1$ and $M_2$ of the same size can have the same numerical range without being unitarily similar to one another. Thus, this is demonstrating a special property of compressions of the shift.

\subsection{Two-variable setting} 

 In this paper, we will investigate compressions of shifts in the two-variable setting and explore analogues of these uniqueness results and properties of numerical ranges. Specifically, let $\mathbb{D}^2$ denote the unit bidisk given by
$\mathbb{D}^2 = \{z= (z_1, z_2) \in \mathbb{C}^2: |z_1|, |z_2|<1\}.$
The two-variable generalizations of finite Blashke products on $\mathbb{D}^2$ are called \emph{rational inner functions} (RIFs); these are rational functions that are holomorphic on $\mathbb{D}^2$ with radial boundary values on $\mathbb{T}^2$ with modulus $1$ almost everywhere. 
Here are two examples: 
\begin{equation} \label{eqn:ex1} \phi(z_1, z_2) =  z_1 z_2^2 \left( \frac{z_1- \frac{1}{2}}{1- \frac{z_1}{2}}\right)  \left( \frac{z_2- \frac{i}{3}}{1+ i\frac{z_2}{3}}\right)   \ \ \text{ and } \ \ \theta(z_1, z_2) = \frac{2z_1 z_2-z_1-z_2}{2-z_1-z_2}.\end{equation}
The first example $\phi$ in \eqref{eqn:ex1} shows that we can build some RIFs using one-variable finite Blaschke products. The second example $\theta$ shows that RIFs do not always decompose into nice linear factors like finite Blaschke products. Moreover, they can have singularities on the boundary of $\mathbb{D}^2$, as evidenced by the fact that $\theta$ does not exist at $(1,1)$.  Given a rational function or polynomial $f$, we use $\deg_j f$ to denote the largest power of $z_j$ in the formula for $f$ and $\deg f = (\deg_1 f, \deg _2 f)$. So in \eqref{eqn:ex1}, $\deg \phi = ( 2, 3)$. 

RIFs do possess some  guaranteed structure.
 Rudin and Stout \cite{Rud69, RS65}  showed that for each RIF $\theta$, there is a  polynomial $p$ with no zeros in $\mathbb{D}^2$, an ordered pair of nonnegative integers $(m, n)$ with $(m,n) \geq {\rm{deg}} \,  p,$ and $\lambda \in \mathbb{T}$ such that 
\begin{equation} \label{eqn:RIF}  \theta(z_1, z_2) = \lambda\frac{ \tilde{p}(z_1, z_2)}{p(z_1, z_2)},\end{equation}
where  $\tilde{p}(z)$  is a reflection polynomial of $p$ defined by 
\[\tilde{p}(z_1, z_2) = z_1^m z_2^n \overline{ p\left (\tfrac{1}{\overline{z}_1}, \tfrac{1}{\overline{z}_2}\right)}.\]  Moreover, any function of the form \eqref{eqn:RIF} with $\tilde{p}$ a polynomial is an RIF.  Note that we have chosen to incorporate the full values of $m$ and $n$ into the definition of the reflection $\tilde{p}$  as was done in \cite{bg17} to simplify notation. 

The polynomial $p$ can actually be chosen uniquely (up to a constant multiple) if we further require that $p$ be \emph{atoral}, which in two variables means that $p$ has at most finitely many zeros on $\mathbb{T}^2$, see \cite[Theorem 3.3 and Section $4$]{ams06}. Since we have assumed that $p$ has no zeros in $\mathbb{D}^2$, the condition that $p$ be atoral is equivalent to  requiring that $p$ and $\tilde{p}$ have no common factors \cite{ams06, k15} and implies that $p$ has no zeros on the faces $\mathbb{D} \times \mathbb{T}$ and $\mathbb{T} \times \mathbb{D}$  of the bidisk, \cite[Theorem 4.9.1]{Rud69}.  Since $p$ has no zeros in $\mathbb{D}^2$, ${\rm deg} \,  \tilde{p} =(m,n) \geq {\rm deg} \, p$. Thus  $m$ will be the highest power of $z_1$ and $n$ will be the highest power of $z_2$ appearing in $\theta$, so $\deg \theta = (m,n).$  Throughout the paper, when we write $\theta=\lambda\frac{\tilde{p}}{p}$ is an RIF of degree $(m,n)$, we will be assuming that $p$ and $\tilde{p}$ are the atoral polynomial and reflection described above. Although RIFs are more complicated than finite Blaschke products, they still play similar roles in both interpolation and approximation applications on the bidisk, see \cite{am01, Rud69}.

Moving towards a definition of two-variable compressions of shifts, let $H^2(\mathbb{D}^2)$ denote the two-variable Hardy space on $\mathbb{D}^2$, namely the Hilbert space of functions holomorphic on $\mathbb{D}^2$ of the form 
\[ f(z) = \sum_{m,n=0}^{\infty} a_{m,n} z_1^mz_2^n \text{ with }  \| f\|_{H^2(\mathbb{D}^2)}^2 := \sum_{m,n=0}^\infty |a_{m,n}|^2 < \infty.\]
As in the one-variable case, we fix an RIF $\theta$ and define an associated two-variable model space by 
\[ \mathcal{K}_\theta :=H^2(\mathbb{D}^2) \ominus \theta H^2(\mathbb{D}^2).\] 
In this setting, there are now two shift operators $S_{z_1}, S_{z_2}$ defined on $H^2(\mathbb{D}^2)$ by $[S_{z_j}(f)](z) = z_j f(z)$ for $j=1,2$ and thus, two compressions of the shift
\[  \hat{S}^j_\theta := P_\theta S_{z_j} |_{\mathcal{K}_\theta},\]
on each $\mathcal{K}_\theta$, where $P_\theta$ is the orthogonal projection from $H^2(\mathbb{D}^2)$ onto $\mathcal{K}_\theta$. In the context of numerical ranges, these operators are not particularly interesting because generically the closure of each $W(\hat{S}^j_\theta)$ is $\overline{\mathbb{D}},$ see \cite{bg17}. For a setting that more closely mirrors the one-variable situation, we need to further decompose $\mathcal{K}_\theta$. The key tool available is called an \emph{Agler decomposition}. As proved in \cite{bsv05}, each $\mathcal{K}_\theta$ decomposes as
\[\mathcal{K}_{\theta} = \mathcal{S}_1 \oplus \mathcal{S}_2,\] 
where each $ \mathcal{S}_j$ is a closed subspace of $\mathcal{K}_\theta$ (and hence of $H^2(\mathbb{D}^2)$) invariant under the shift $S_{z_j}$.  These decompositions give the following alternative compressions of shifts
 \begin{equation} \label{eqn:Stheta} S^1_\theta = P_{\mathcal{S}_2} S_{z_1} |_{\mathcal{S}_2}  \ \ \text{ and } \ \  S^2_\theta = P_{\mathcal{S}_1} S_{z_2} |_{\mathcal{S}_1}, \end{equation}
 where $ P_{\mathcal{S}_j} $ is the orthogonal projection from $H^2(\mathbb{D}^2)$ onto $\mathcal{S}_j$. The first author and P. Gorkin studied these two-variable compressions of the shifts and their numerical ranges in \cite{bg17}, but a number of questions remained open. One key result from that paper is the following, which appears as Theorem 3.2 in  \cite{bg17}:

\begin{theorem} \label{thm:M} Let $\theta$ be an RIF of degree $(m,n)$ with $m \geq 1$. Then
there is an $m \times m$ matrix-valued function $M^1_\theta$ with entries continuous on $\overline{\mathbb{D}}$ and rational in $\overline{z_2}$ such that $S^1_{\theta}$ is unitarily equivalent to the matrix-valued Toeplitz operator $T_{M^1_\theta}$ defined by
\[T_{M^1_\theta} \overrightarrow{f}  = P_{H_2^2(\mathbb{D})^m}(M^1_{\theta} \overrightarrow{f}) \quad \text{ for }  \overrightarrow{f} \in H^2_2(\mathbb{D})^m,\]
where $H^2_2(\mathbb{D})$ denotes the one-variable Hardy space with variable $z_2$  and $H^2_2(\mathbb{D})^m:=\bigoplus_{j = 1}^m H_2^2(\mathbb{D}).$
\end{theorem}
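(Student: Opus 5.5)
The plan is to use the explicit structure of the Agler subspaces for an RIF of degree $(m,n)$. By the results of \cite{bsv05} and their refinements (Knese, Bickel--Knese), one may choose the decomposition so that
\[ \mathcal{S}_2 = \mathcal{H}_2 \cdot H^2_2(\mathbb{D})\text{-type space}, \qquad \text{concretely } \mathcal{S}_2 = \Big\{ \sum_{j=1}^m f_j(z_2)\,\frac{E_j(z)}{p(z)} : f_j \in H^2_2(\mathbb{D}) \Big\}. \]
Here $\mathcal{S}_2$ is $S_{z_2}$-invariant (the paper's convention is that $S^1_\theta$ compresses $S_{z_1}$ to $\mathcal{S}_2$). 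The functions $E_1,\dots,E_m$ are polynomials with $\deg E_j \le (m-1,n)$ coming from a sums-of-squares formula
\[ p\overline{p} - \tilde p\overline{\tilde p} = (1-|z_1|^2)\textstyle\sum |F_i|^2 + (1-|z_2|^2)\sum |E_j|^2 .\]
First I would fix this decomposition and define $U:H^2_2(\mathbb{D})^m \to \mathcal{S}_2$ by $U\vec f = \sum_j f_j E_j/p$. Key step one is to show $U$ is unitary, i.e. that the map is isometric. For fixed $z_2 \in \mathbb{T}$, the functions $E_j(\cdot,z_2)/p(\cdot,z_2)$ form an orthonormal basis of the one-variable model space $K_{\theta(\cdot,z_2)}$, which has dimension $m$ for a.e. $z_2$. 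Integrating over $z_2$ then gives $\|U\vec f\|^2=\int_{\mathbb T}\|\vec f(z_2)\|^2$, and surjectivity follows from the definition of $\mathcal{S}_2$.

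Step two is to compute $U^*S^1_\theta U$. For each $z_2$, multiplication by $z_1$ acts on $E_j/p$ as
\[ z_1 \frac{E_j}{p} = \sum_k m_{kj}(z_2)\frac{E_k}{p} + (\text{component orthogonal to } \mathcal{S}_2). \]
Slice by slice, the matrix $M^1_\theta(z_2)=[m_{kj}(z_2)]$ is the matrix of the one-variable compressed shift $S_{\theta(\cdot,z_2)}$ in the orthonormal basis $\{E_j(\cdot,z_2)/p(\cdot,z_2)\}$. That gives
\[ m_{kj}(z_2)=\int_{\mathbb T} z_1 \frac{E_j(z_1,z_2)\overline{E_k(z_1,z_2)}}{|p(z_1,z_2)|^2}\,|dz_1| .\]
Using the polynomial identity above, this can be rewritten as a rational expression in $z_2,\overline{z_2}$. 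Since $z_2\overline{z_2}=1$ on $\mathbb T$, it becomes rational in $\overline{z_2}$ alone.

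Step three identifies the compression. With $P_{\mathcal S_2}$ pulled back by $U$, we have $U^*P_{\mathcal{S}_2}S_{z_1}U\vec f = P_{H^2_2(\mathbb{D})^m}(M^1_\theta \vec f)$. The fiberwise projection yields multiplication by $M^1_\theta$ in $L^2(\mathbb T)^m$, and the subsequent projection onto the analytic part is exactly the Toeplitz compression. I must check that the part of $z_1E_j/p$ orthogonal to $\mathcal S_2$ within $L^2$ fibers matches $P_{\mathcal S_2}$ in $H^2(\mathbb D^2)$; that is, that $\mathcal{S}_2$ corresponds to the full $H^2_2$-span and not a larger $L^2$ one.

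The main obstacle is continuity on $\overline{\mathbb D}$ (on $\mathbb T$ in $z_2$). The one-variable slice $\theta(\cdot,z_2)$ can drop degree at the finitely many boundary singular points of $\theta$, where $p(\cdot,z_2)$ has zeros on $\mathbb T$. I would handle this using the atoral structure. Since the $E_j$ are polynomials and the Gram identity holds on all of $\mathbb T^2$, the entries can be written as $\overline{z_2}$-polynomial quotients whose denominators are nonvanishing on $\overline{\mathbb D}$, which requires checking the appropriate reflected polynomial in $z_2$ has no zeros in the closed disk. The concrete device I would try first is a transfer-function/unitary realization
\[ \theta = A + B z_1(\ldots) \]
from the Agler decomposition. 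It should give $M^1_\theta(z_2)=\big(D^*+\overline{z_2}\,\cdot\big)$-type formula $A_1 + \overline{z_2}B_1(I-\overline{z_2}D_1)^{-1}C_1$ with $D_1$ a contraction having no unimodular eigenvalues. That formula would give continuity and rationality in $\overline{z_2}$ at once.
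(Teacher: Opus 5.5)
\textbf{There is a genuine gap: your argument never shows that the symbol is conjugate-analytic.}

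Steps one through three are essentially sound, apart from two small points. First, your sum-of-squares identity has the weights swapped. The $E_j$, of degree $\le(m-1,n)$ and spanning $\mathcal{S}_2\ominus z_2\mathcal{S}_2$, must carry $1-|z_1|^2$. That pairing is what makes the slices at $z_2=\tau\in\mathbb{T}\setminus E_\theta$ an orthonormal basis of $\mathcal{K}_{\theta_\tau}$. Second, no special decomposition needs to be chosen, since every decomposition admits such a basis. The check you flag in step three is immediate. The remainder of $z_1E_j(\cdot,\tau)/p(\cdot,\tau)$ after projecting onto $\mathcal{K}_{\theta_\tau}$ lies in $\theta_\tau H^2$. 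So $\sum_j f_j$ times these remainders is orthogonal in $L^2(\mathbb{T}^2)$ to the whole module $\{\sum_j g_j(z_2)E_j/p : g_j\in L^2(\mathbb{T})\}$, which contains $\mathcal{S}_2$.

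The problem is what these steps deliver: $S^1_\theta\cong T_M$ for a bounded symbol $M$ known only almost everywhere on $\mathbb{T}$ through slice integrals. The theorem asks for $M^1_\theta$ continuous on $\overline{\mathbb{D}}$ and rational in $\overline{z_2}$. That forces every entry to be the conjugate of a rational function holomorphic near $\overline{\mathbb{D}}$, i.e.\ the symbol must be conjugate-analytic. Rewriting $\int_{\mathbb{T}} z_1E_j\overline{E_k}\,|p|^{-2}\,dm(z_1)$ using $\overline{z_2}=1/z_2$ does give a rational function of $\overline{z_2}$ on $\mathbb{T}$. It says nothing about poles in $|\overline{z_2}|\le 1$; compare $\tau+\overline{\tau}=\overline{\tau}+1/\overline{\tau}$. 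Nothing in your argument shows that the positive-frequency Fourier coefficients of $M$ vanish. So the real obstacle is conjugate-analyticity on the whole disk, not continuity at the finitely many exceptional points. Continuity is automatic once the entries are known to be conjugates of rational $H^2$ functions. The realization formula that would settle this is only asserted (``should give''), and the absence of unimodular eigenvalues of $D_1$ is left unproved.

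\textbf{The missing idea, on which Theorem~\ref{thm:Mconstructive} rests, is that $\mathcal{S}_2=\mathcal{K}_\theta\ominus\mathcal{S}_1$ is invariant under $S_{z_1}^*$.} The reason is that $\mathcal{K}_\theta$ is $S_{z_1}^*$-invariant, and for $f\in\mathcal{S}_2$ and $g\in\mathcal{S}_1$ one has $\langle S_{z_1}^*f,g\rangle=\langle f,z_1g\rangle=0$. Hence $S_{z_1}^*(E_i/p)=\sum_j h_{ij}(z_2)\,E_j/p$ with $h_{ij}\in H^2_2(\mathbb{D})$. Since $S_{z_1}^*$ commutes with multiplication by functions of $z_2$, $U^*(S^1_\theta)^*U$ is multiplication by the analytic matrix $[h_{ji}]$ on $H^2_2(\mathbb{D})^m$. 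Taking adjoints gives $S^1_\theta\cong T_{M^1_\theta}$ with $M^1_\theta=[\overline{h_{ij}}]$, with no fiberwise projection at all.

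Rationality comes from comparing coefficients of $z_1^k$ in $\sum_j h_{ij}E_j=(E_i\,p(0,z_2)-E_i(0,z_2)\,p)/(z_1\,p(0,z_2))$. This is a linear system whose polynomial coefficient matrix is invertible at each $\tau\in\mathbb{T}\setminus E_\theta$, because the slices $E_j(\cdot,\tau)$ are linearly independent. A rational function in $H^2$ has no poles on $\overline{\mathbb{D}}$, which gives continuity.

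Your slice formula is then the consequence recorded after Theorem~\ref{thm:Mconstructive}: $M^1_\theta(\tau)$ is the matrix of $S_{\theta_\tau}$ in the basis $\beta_{2,\tau}$. Your transfer-function idea is the same fact in coordinates. The lurking isometry gives $Q=\hat C(z_2)+z_1\hat D(z_2)Q$ for $Q=(E_1/p,\dots,E_m/p)^T$, i.e.\ $S_{z_1}^*Q=\hat D(z_2)Q$ with $\hat D$ analytic on $\mathbb{D}$. That identity is exactly what you would have to derive.
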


So, these two-variable compressions of shifts are unitarily equivalent to one-variable matrix-valued Toeplitz operators. This fact will play an important role in the current paper, and the symbols $M_\theta^1$ will play a role analogous to the matrices $M_B$ in \eqref{eqn:SB} from the one-variable situation. By restricting to $\mathcal{S}_1$ instead of $\mathcal{S}_2$, one can analogously obtain symbols $M_\theta^2$ associated to the compressed shifts $S^2_\theta$.

Before proceeding, one should note that in Theorem \ref{thm:M}, the operator $S^1_{\theta}$ depends upon the chosen decomposition of $\mathcal{K}_{\theta}$ into the $S_{z_1}$-invariant subspace $\mathcal{S}_1$ and the $S_{z_2}$-invariant subspace $\mathcal{S}_2$. In general, there are multiple ways of choosing such decompositions. Meanwhile, the formula for $M^1_\theta$ further depends on a choice of orthonormal basis for the subspace $\mathcal{S}_2 \ominus S_{z_2} \mathcal{S}_2$; this relationship is explained in Section \ref{sec:2var}. As in \cite{bg17}, we will often suppress these dependences and just write $M_\theta^1$, with the understanding that there are implicit dependences on the chosen decomposition and basis. However, at some points, these choices will be important, and then we will use the notation $M_{\theta, \beta_2}^1$ to represent the matrix-valued function coming from a particular decomposition $\mathcal{S}_1\oplus \mathcal{S}_2$ with associated orthonormal basis $\beta_2$ of $\mathcal{S}_2 \ominus S_{z_2} \mathcal{S}_2$.  We can use similar notation $M^2_{\theta, \beta_1}$ to emphasize the basis $\beta_1$ of $\mathcal{S}_1 \ominus z_1\mathcal{S}_1$ when needed.

Although it is more complicated than the one-variable matrix $M_B$, the matrix-valued function $M_\theta^1$ has a natural interpretation in terms of the basis vectors of $\mathcal{S}_2$ and can sometimes be computed exactly. For instance, in Example \ref{ex:Mtheta_2}, we consider
\[\theta(z_1,z_2)=  \frac{2z^2_1z_2-z^2_1-z_1z_2}{2-z_1-z_2}\]
and find a decomposition of $\mathcal{K}_\theta$ as $\mathcal{K}_\theta = \mathcal{S}_1 \oplus \mathcal{S}_2$ such that  
\[M^1_{\theta}=\begin{bmatrix}0 & 0 \\ \frac{\sqrt{2}(1-\overline{z}_2)}{2-\overline{z_2}} & \frac{1}{2-\overline{z}_2}\end{bmatrix}. \]
We give more details about ways of computing $M^1_\theta$ in Section \ref{sec:2var}.

In this paper, we are motivated by this relationship to one-variable matrix-valued Toeplitz operators and build on the work from \cite{bg17} to further explore the structure and properties of the two-variable compressions of the shift defined in \eqref{eqn:Stheta}. However, it is worth noting that other two-variable shifts, shift-invariant subspaces, and related objects have been extensively studied in the literature, see, for example, \cite{bkds22, BL17, bcs23, dy00, lyz23, zLYS22, y02}. 

\subsection{Overview of Paper and Key Results}
In Section \ref{sec:2var}, we provide more intuition and details about the two variable situation. This includes additional explanations about Agler decompositions, a constructive version of Theorem \ref{thm:M}, and an example illustrating how to find $M_\theta^1$ for a given degree $(2,1)$ RIF $\theta$.

In Section \ref{sec:1n}, we study RIFs $\theta$ with $\deg \theta = (1,n)$. In this case, Theorem \ref{thm:M} says that $M^1_\theta$ is a $1\times 1$ function and thus, $S_\theta^1$ is unitarily equivalent to a scalar-valued Toeplitz operator. Section \ref{sec:1n} explores the implications of that observation. First, in Proposition \ref{prop:n1}, we identify a formula for the symbol of the associated Toeplitz operator. We then use known results about Toeplitz operators to characterize the spectrum, numerical range, and numerical radius of the compressed shift $S_\theta^1$ and deduce a number of its additional properties. For example, we establish a uniqueness result in Proposition \ref{prop:n1B}, characterize when $W(S_\theta^1)$ is open in Remark \ref{rem:open}, and identify which functions $M^1_\theta$ can be associated to some compressed shift $S^1_\theta$ in Proposition \ref{prop:psi}. 

For general RIFs,  the matrix-valued function $M^1_\theta$  is the two-variable analogue of \eqref{eqn:SB}.  To facilitate further study of this matrix-valued function, we construct $M^1_{\theta}$ for several classes of examples of RIFs $\theta$ in Section \ref{sec:product}.  Specifically, we consider RIFs $\theta$ that are  finite products of simpler RIFs and extend an algorithm from  \cite{bg17} to build decompositions of $\mathcal{K}_{\theta}$ and the corresponding $M^i_{\theta}$ from decompositions and corresponding matrix-valued functions for the RIF factors of $\theta$.  After analyzing the possible decompositions for degree $(1,0)$ and degree $(1,1)$ RIFs, we apply this algorithm to finite products of RIFs of these degrees, including arbitrary degree $(m,0)$ RIFs and RIFs of the form $\theta(z)=B(z_1z_2)$ for  a single-variable finite Blaschke product $B$. In this latter case, $M^1_{\theta}$ can be built from the matrix $M_B$ in \eqref{eqn:SB}.

The one-variable results in Theorem \ref{thm:unique1} and the results in the degree $(1,n)$ case imply three natural questions. First, in Section \ref{sec:M}, we consider \textbf{Question 1:} \emph{If $\theta$ and $\phi$ are two RIFs, what conditions on the matrix-valued functions $M^j_\theta$ and  $M^j_\phi$ guarantee that $\theta =\lambda \phi$ for some $\lambda \in \mathbb{T}$?}  To that end, we first study when $\theta$ and $\phi$ differ by inner factors involving $z_2$ and then characterize that in terms of the structure of $M_\theta^1$ and $M_\phi^1$. This appears as Theorem \ref{thm:uniqueness} and has the following corollary, which answers Question 1:

\begin{corollary*} \ref{cor:Munique}. Let $\theta, \phi$ be RIFs on $\mathbb{D}^2$ and $M_\theta^j, M_\phi^j$ be defined as above for $j=1,2$. Then $\theta=\lambda \phi,$ for some $\lambda \in \mathbb{T}$ if and only if there are unitary-valued functions $U_1, U_2$ on $\mathbb{T}$ such that for each $\tau \in \mathbb{T}$ and $j=1,2,$ 
$M^j_\theta(\tau) = U_j(\tau) M^j_\phi(\tau) U_j(\tau)^*.$
\end{corollary*}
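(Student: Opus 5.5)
The plan is to reduce both directions to the one-variable Theorem \ref{thm:unique1}, applied slice by slice on $\mathbb{T}$. The key intermediate claim concerns an RIF $\theta$ of degree $(m,n)$ with $m\ge 1$. For all but finitely many $\tau\in\mathbb{T}$ (possibly with $\tau$ replaced by $\overline{\tau}$, depending on how the symbol is conjugated), the matrix $M^1_\theta(\tau)$ should be unitarily similar to the compressed shift matrix $M_{B_\tau}$ in \eqref{eqn:SB}. Here $B_\tau$ is the one-variable finite Blaschke product $B_\tau(z_1)=\theta(z_1,\tau)$, which has degree $m$. The symmetric claim for $M^2_\theta$ uses the slices $z_2\mapsto\theta(\tau,z_2)$.

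To justify this claim I would use the constructive form of Theorem \ref{thm:M} from Section \ref{sec:2var}. There $M^1_{\theta,\beta_2}$ is written through the orthonormal basis $\beta_2$ of $\mathcal{S}_2\ominus S_{z_2}\mathcal{S}_2$. Its entries are obtained by evaluating the relation $z_1 e_i = \sum_j \overline{(\cdot)}\, e_j + (\text{terms in } \mathcal{S}_1 \text{ and } \theta H^2)$ on the slice $z_2=\tau$. Restricting the basis functions to $z_2=\tau$ should give an orthonormal basis of the one-variable model space $K_{B_\tau}$. In that basis $M^1_\theta(\tau)$ is the matrix of $S_{B_\tau}$. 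This is the Agler-kernel fact that the slices of $\mathcal{S}_2\ominus z_2\mathcal{S}_2$ reproduce $K_{\theta(\cdot,\tau)}$. This is the step I expect to be the main obstacle. It is presumably exactly what Theorem \ref{thm:uniqueness} packages, so I would cite it if its statement allows. The hard parts are controlling the finitely many boundary singularities of $\theta$ on $\mathbb{T}^2$ and showing that the result does not depend on the chosen decomposition or basis. Both should follow from the continuity of $M^1_\theta$ on $\overline{\mathbb{D}}$ together with the fact that a change of basis of $\beta_2$ conjugates by a constant unitary.

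Given the claim, the forward direction is immediate. If $\theta=\lambda\phi$, then $B^\theta_\tau=\lambda B^\phi_\tau$ for every slice. By Theorem \ref{thm:unique1} (i)$\Rightarrow$(ii), $M^j_\theta(\tau)$ and $M^j_\phi(\tau)$ are both unitarily similar to the same matrix $M_{B_\tau}$, which yields unitaries $U_j(\tau)$. At the finitely many exceptional $\tau$, unitary similarity passes to limits by compactness of the unitary group and continuity of the symbols. If $m=0$ or $n=0$, the corresponding symbol is empty and the condition is vacuous.

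For the converse, pointwise unitary similarity and Theorem \ref{thm:unique1} (ii)$\Rightarrow$(i) give $\theta(\cdot,\tau)=\lambda(\tau)\phi(\cdot,\tau)$ for a.e. $\tau\in\mathbb{T}$. The sizes of the symbols match, so $\deg_1\theta=\deg_1\phi$. Equivalently, the eigenvalues give equal zero sets of the slices. Thus $\theta/\phi$ is independent of $z_1$ on these slices, hence $\theta(z)=\psi_2(z_2)\phi(z)$ for a rational function $\psi_2$ that is unimodular on $\mathbb{T}$. The same argument with $M^2$ gives $\theta=\psi_1(z_1)\phi$. Therefore $\psi_1(z_1)=\psi_2(z_2)$ is constant, necessarily some $\lambda\in\mathbb{T}$.
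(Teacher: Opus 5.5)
Your proposal is correct and is essentially the paper's argument. The paper proves one direction by citing Theorem \ref{thm:uniqueness}. For the other direction it reuses that theorem's slice argument to show $\theta/\phi$ depends only on $z_2$ (and symmetrically only on $z_1$), and the slice fact $M^1_\theta(\tau)\cong M_{\theta_\tau}$ that you flag as the main obstacle is already established in Section \ref{sec:2var}. The minor differences are that the paper gets $U(\tau)$ at the exceptional points from an explicit Cramer's-rule formula with removable singularities rather than your (equally valid) compactness limit, and that it writes out the identity-theorem step behind ``$\theta/\phi$ is constant on slices, hence a function of $z_2$ alone,'' which you assert without proof.
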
 

This shows that for any decomposition of $\mathcal{K}_\theta$, the symbols $M_\theta^1, M_\theta^2$ corresponding to the two compressed shifts almost uniquely identify $\theta$. Then in Corollary \ref{cor:2decomps}, we apply Theorem \ref{thm:uniqueness} to the case where $\theta = \phi$ but the symbols $M^1_\theta$ and $\widetilde{M}_\theta^1$ originate from different decompositions of $\mathcal{K}_\theta$. Example \ref{ex:Mtheta_2} illustrates finding such $\mathcal{K}_\theta$ decompositions, computing $M_\theta^1,$ and studying various properties of the unitary-valued functions $U_1, U_2$, such as the fact that they cannot always be constant.  

In Section \ref{sec:W}, we investigate \textbf{Question 2:}  \emph{If $\theta$ and $\phi$ are two RIFs, what does the numerical range equality $W(S_\theta^j) =W(S_\phi^j)$ for $j=1,2$ tell us about the relationship between $\theta$ and $\phi$?} Unfortunately, this relationship appears to be much more complicated than in the one variable situation (where $B_1 = \lambda B_2$). While Section \ref{sec:W} also discusses some partial results and examples showing that the answer will not be straightforward, the core of the section is Example \ref{ex:nonuniquest}. This example shows that the following two classes of degree $(2,2)$ RIFs for $t \in(0,1)$:
\[ \phi_t(z) =  \left( \frac{z_1 z_2 -t}{1-t z_1z_2}\right)^2 \ \ \text{ and } \  \ \psi_{t(2-t)}(z) =  z_1z_2\left( \frac{z_1 z_2 - t(2-t)}{1-t(2-t) z_1z_2}\right) \]
have identical numerical ranges $W(S^j_{\phi_t}) = W(S^j_{\psi_{t(2-t)}})$ for $j=1,2.$ However, these functions do not appear to satisfy an obvious relationship.

Lastly, Section \ref{sec:open} considers \textbf{Question 3:} \emph{If $\theta$ is an RIF, when is $W(S_\theta^j)$ open and when is $W(S_\theta^j)$ closed?} In Conjecture \ref{conj:open}, we suggest that the degree $(1,n)$ result likely holds in the general case; namely, if $\theta$ is a product of finite Blaschke products in $z_1$ and $z_2$ separately, then each $W(S_\theta^j)$ is closed, but otherwise, each $W(S_\theta^j)$ is open. Proposition \ref{prop:open} and Example \ref{ex:open1} discuss the situation when $\theta$ is a product of such finite Blaschke products. Specifically, Proposition \ref{prop:open} shows that for such $\theta$, there is always at least one pair $W(S_\theta^1), W(S_\theta^2)$ that must be closed, but Example \ref{ex:open1} shows that handling the general case (all $M_\theta^j$ coming from all possible decompositions of $\mathcal{K}_\theta$) is quite challenging. Meanwhile, Theorem \ref{thm:open} establishes the following partial result about openness:

\begin{theorem*}  \ref{thm:open}. Let $\theta =\lambda \frac{\tilde{p}}{p}$ be an RIF on $\mathbb{D}^2$. If there is no single straight line that intersects the numerical ranges $W(M^1_{\theta}(\tau))$ for all $\tau \in \mathbb{T}$, then $W(S_\theta^1)$ must be open.
\end{theorem*}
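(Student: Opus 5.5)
By Theorem \ref{thm:M}, $S^1_\theta$ is unitarily equivalent to the matrix-valued Toeplitz operator $T_{M^1_\theta}$ on $H^2_2(\mathbb{D})^m$, so $W(S^1_\theta)=W(T_{M^1_\theta})$. We argue by contradiction: suppose $W(T_{M^1_\theta})$ is not open. It is convex, so it contains a boundary point $w_0$, and since $W(T_{M^1_\theta})$ lies in the bounded convex set $\overline{W(T_{M^1_\theta})}$, there is a supporting line $L$ through $w_0$. After rotating and translating, i.e. replacing $M^1_\theta$ by $e^{i\alpha}(M^1_\theta - w_0 I)$, we may take $w_0=0$ and
\[
\operatorname{Re} W(T_{M})\ge 0, \qquad M:=e^{i\alpha}(M^1_\theta-w_0I).
\]
Since $0\in W(T_M)$, there is a unit vector $\vec f\in H^2_2(\mathbb{D})^m$ with $\langle T_M\vec f,\vec f\rangle=0$. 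The aim is to show this forces $L$ to meet every $W(M^1_\theta(\tau))$, contradicting the hypothesis.

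Set $A(\tau)=\operatorname{Re}M(\tau)=\tfrac12\big(M(\tau)+M(\tau)^*\big)$, which is continuous on $\mathbb{T}$ by Theorem \ref{thm:M}. Then $\operatorname{Re}T_M=T_A$ is a selfadjoint Toeplitz operator, and $T_A\ge 0$. The first step is the standard fact that $T_A\ge 0$ implies $A(\tau)\ge 0$ for every $\tau$. To see it, test $T_A$ against normalized reproducing kernels $k_\lambda v$ with $v\in\mathbb{C}^m$; as $\lambda\to\tau$ nontangentially, $\langle T_A k_\lambda v,k_\lambda v\rangle\to\langle A(\tau)v,v\rangle$ by continuity of $A$. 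So every $W(M(\tau))$ lies in the closed right half-plane.

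Next we use the zero. Writing
\[
0=\operatorname{Re}\langle T_M\vec f,\vec f\rangle=\int_{\mathbb{T}}\langle A(\tau)\vec f(\tau),\vec f(\tau)\rangle\,dm(\tau),
\]
the integrand is nonnegative, so $\langle A(\tau)\vec f(\tau),\vec f(\tau)\rangle=0$ for a.e. $\tau$. Because $\vec f\neq 0$ is in $H^2$, $\vec f(\tau)\neq 0$ for a.e. $\tau$ (an $H^2$ function cannot vanish on a set of positive measure). Hence for a.e. $\tau$ the point
\[
\frac{\langle M(\tau)\vec f(\tau),\vec f(\tau)\rangle}{\|\vec f(\tau)\|^2}
\]
lies in $W(M(\tau))$ and on the imaginary axis, which is the line $L$ after our normalization. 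So $L$ meets $W(M(\tau))$ for a.e. $\tau$.

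It remains to pass from a.e. $\tau$ to all $\tau$, and this is the main obstacle. Since $A(\tau)\ge0$, the set $W(M(\tau))$ meets $L$ exactly when $\lambda_{\min}(A(\tau))=0$. The function $\tau\mapsto\lambda_{\min}(A(\tau))$ is continuous because $A$ is, so a zero set of full measure is dense and closed, hence all of $\mathbb{T}$. (Alternatively, $\det A(\tau)$ is continuous and vanishes a.e.) Therefore $L$ meets $W(M(\tau))$ for every $\tau$, and undoing the rotation and translation, the line $e^{-i\alpha}L+w_0$ meets $W(M^1_\theta(\tau))$ for all $\tau\in\mathbb{T}$. This contradicts the hypothesis, so $W(S^1_\theta)$ is open.
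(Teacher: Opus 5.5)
Your proposal is correct and follows essentially the same route as the paper. You both normalize a point of $W(T_{M^1_\theta})\cap\partial W(T_{M^1_\theta})$ to $0$ with a supporting half-plane. You both get pointwise semidefiniteness of the real part of the symbol (the paper uses the imaginary part) by testing against functions concentrating at a boundary point; your normalized reproducing kernels play the role of the paper's $\vec{x}/(z-(1+\epsilon)\tau_0)$. You both then use a unit vector attaining $0$, a.e.\ nonvanishing of $H^2$ boundary values, and continuity. Your $\lambda_{\min}$ argument simply makes explicit the paper's brief continuity step from almost every $\tau$ to every $\tau$.
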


Example \ref{ex:open2} illustrates this theorem and provides additional support for Conjecture \ref{conj:open}.


\section*{Acknowledgements} 

Special thanks to Pamela Gorkin and Meredith Sargent for many useful discussions and insights related to both the overall direction and specific questions addressed in this paper.  As this work was initiated at the conference \emph{Working Groups for Women in Operator Theory} hosted by the Lorentz Center in Leiden, the authors would also like to thank the Lorentz Center staff and the workshop organizers for creating such a stimulating and collaborative environment.


\section{Two-variable Details} \label{sec:2var}

In this section, we provide some background on Agler decompositions of RIFs and how to use them to determine the matrix-valued function $M_\theta^1$ from Theorem \ref{thm:M}. 
 Throughout, let $\theta=\lambda \frac{\tilde{p}}{p}$ be an RIF of degree $(m,n)$.  Then, as proved in \cite{bsv05} and further studied in  \cite{b12},
 the model space $\mathcal{K}_{\theta}$ decomposes as \begin{align*} \mathcal{K}_{\theta}=\mathcal{S}_1^{\max} \oplus \mathcal{S}_2^{\min} = \mathcal{S}_1^{\min} \oplus \mathcal{S}_2^{\max},\end{align*} where $\mathcal{S}_1^{\max}$ (respectively $\mathcal{S}_2^{\max}$) is the largest, necessarily closed, subspace of $\mathcal{K}_{\theta}$ that is invariant under $S_{z_1}$ (respectively $S_{z_2}$), $\mathcal{S}_2^{\min}=\mathcal{K}_{\theta} \ominus \mathcal{S}_1^{\max}$,  and $\mathcal{S}_1^{\min}=\mathcal{K}_{\theta} \ominus \mathcal{S}_2^{\max}$. Somewhat surprisingly,  each $\mathcal{S}_j^{\min}$ is invariant under $S_{z_j}$.  These decompositions are canonical in the sense that if $\mathcal{K}_{\theta}=\mathcal{S}_1 \oplus \mathcal{S}_2$ is any orthogonal decomposition of $\mathcal{K}_{\theta}$ in which each $\mathcal{S}_j$ is $S_{z_j}$-invariant (we often write this as $z_j$-invariant), then $\mathcal{S}_j^{\min} \subseteq \mathcal{S}_j \subseteq \mathcal{S}_j^{\max}$.  

We now fix such an orthogonal  decomposition $\mathcal{K}_{\theta} = \mathcal{S}_1 \oplus \mathcal{S}_2$ in which each $\mathcal{S}_j$ is a closed $S_{z_j}$-invariant subspace.  It is well known that $\mathcal{K}_{\theta}$ is a reproducing kernel Hilbert space with reproducing kernel \begin{align*} K_{\theta}(z, w):=\frac{1 - \theta(z)\overline{\theta(w)}}{(1-z_1\overline{w_1})(1-z_2\overline{w_2})}.\end{align*} 
As discussed in the proofs of \cite[Theorems 2.5 and 3.7]{b12}, there exists a pair of holomorphic, positive semi-definite kernels $K_1, K_2 : \mathbb{D}^2 \times \mathbb{D}^2 \rightarrow \mathbb{C}$ such that each $\mathcal{S}_j$ is a reproducing kernel Hilbert space with kernel  $\frac{K_j}{1-z_j\overline{w_j}}.$ We write this as
\begin{align}\label{eqn:Skernels} \mathcal{S}_1 = \mathcal{H} \left(\frac{K_1(z,w)}{1-z_1\overline{w_1}}\right) \quad \text{and} \quad \mathcal{S}_2=\mathcal{H}\left(\frac{K_2(z,w)}{1-z_2\overline{w_2}}\right).\end{align}
An explicit formula for $K_j$ is given by $K_j(z, w)=(1-z_j\overline{w_j})(P_{\mathcal{S}_j}\left(K_{\theta}(\cdot, w)\right)(z)$ and 
\begin{align*} K_{\theta}(z,w)=\frac{K_1(z, w)}{1-z_1\overline{w_1}} + \frac{K_2(z,w)}{1-z_2\overline{w_2}}. \end{align*} 
This implies that  the pair of kernels $(K_1, K_2)$ satisfies the equation
 \begin{align}\label{Agler} 1-\theta(z)\overline{\theta(w)}=(1-z_1\overline{w_1})K_2(z,w) + (1-z_2\overline{w_2})K_1(z,w)\end{align}
 for all $z, w \in \mathbb{D}^2$. More generally, equation \eqref{Agler} is called an \emph{Agler decomposition of $\theta$} and any holomorphic positive  semi-definite kernels $(K_1, K_2)$ that satisfy \eqref{Agler} are called \emph{Agler kernels of $\theta$}.  If we start with a pair of Agler kernels $(K_1, K_2)$ of $\theta$, then the spaces $\mathcal{S}_1$ and $\mathcal{S}_2$ defined by \eqref{eqn:Skernels} are necessarily contractively contained in $\mathcal{K}_{\theta}$ and each  $\mathcal{S}_j$ is $S_{z_j}$-invariant. 
 However, unless the Agler decomposition of $\theta$ is unique, one must also verify that $\mathcal{S}_1 \cap \mathcal{S}_2=\{0\}$ in order to apply a corollary of Aronszajn's Sum of Kernels Theorem to conclude that $\mathcal{S}_1$ and $\mathcal{S}_2$ are orthogonal subspaces that are closed in the $H^2(\mathbb{D}^2)$  norm and  $\mathcal{K}_{\theta}=\mathcal{S}_1 \oplus \mathcal{S}_2$. See, for example, \cite{ag89, bsv05,  b12,  k11A, k11B} for more information about Agler kernels and \cite{PR} for general facts about reproducing kernel Hilbert spaces.

Continuing with our fixed decomposition $\mathcal{K}_{\theta}=\mathcal{S}_1 \oplus \mathcal{S}_2$ and the corresponding kernels $K_1, K_2$ that satisfy \eqref{eqn:Skernels}, note that $\mathcal{H}(K_2)=\mathcal{S}_2 \ominus z_2\mathcal{S}_2$.  As indicated in \cite[Theorem 2.2]{bg20}, $\dim(\mathcal{S}_2 \ominus z_2\mathcal{S}_2)=m$, and, provided that $m \geq 1$,  there exists an orthonormal basis of $\mathcal{S}_2 \ominus z_2\mathcal{S}_2$ of the form \begin{align} \label{eqn:S2basis} \beta_{2}=\left\{\frac{q_1}{p},  \ldots, \frac{q_m}{p}\right\} \text{where all the }  \, q_i \, \text{are  polynomials with} \,  {\rm deg} \, q_i \leq (m-1, n). \end{align}   Recall that we can recover the kernel $K_2$ from this basis via the formula $K_2(z, w) = \sum_{i=1}^m \frac{q_i(z)\overline{q_i(w)}}{p(z)\overline{p(w)}}$.  
We can also  recover the full subspace $\mathcal{S}_2$ via the unitary operator $\mathcal{U}_{\beta_{2}}: H_2^2(\mathbb{D})^m \rightarrow \mathcal{S}_2$ given by \begin{align} \label{unitaryS2}\mathcal{U}_{\beta_2}\vec{f}:=\sum_{i=1}^m \frac{q_i}{p}f_i \quad \text{for} \quad \vec{f}=(f_1, \ldots, f_m) \in H^2_2(\mathbb{D})^m.\end{align}  As in Theorem \ref{thm:M}, $H^2_2(\mathbb{D})$ denotes the one-variable Hardy space with variable $z_2$  and $H^2_2(\mathbb{D})^m:=\bigoplus_{j = 1}^m H_2^2(\mathbb{D}).$  It is also worth noting that if $m=0$, then $\theta$ has a unique Agler decomposition, see \cite[Corollary 1.16]{k10}. In this case, setting $\mathcal{S}_1:=\mathcal{K}_{\theta}$ and $\mathcal{S}_2:=\{0\}$ gives one (and hence all) such decompositions.   
While we are primarily focused  on $\mathcal{S}_2$, a similar analysis can be done for $\mathcal{S}_1$ to establish the existence of an orthonormal basis of $\mathcal{H}(K_1)=\mathcal{S}_1 \ominus z_1 \mathcal{S}_1$ of the form 
\begin{align} \label{eqn:S1basis} \beta_1=\left\{\frac{r_1}{p}, \ldots, \frac{r_n}{p} \right\} \text{where all the }   \, r_i \, \text{are polynomials with} \, {\rm deg} \, r_i \leq (m, n-1),\end{align} which can be used to define a unitary operator $\mathcal{U}_{\beta_1}: H^2_1(\mathbb{D})^n \rightarrow \mathcal{S}_1$ in an analogous fashion to \eqref{unitaryS2}. As expected, $H^2_1(\mathbb{D})$ denotes the one-variable Hardy space with variable $z_1.$ 

If we start with any pair $(K_1, K_2)$  of Agler kernels  of $\theta$, as noted in \cite[Theorem 2.1]{bg20}, $\mathcal{H}(K_1)$ and $\mathcal{H}(K_2)$ are both finite-dimensional and there exist  polynomials $r_1, \ldots, r_s, q_1, \ldots, q_t$ satisfying  ${\rm deg} \, r_i \leq (m, n-1)$ and  ${\rm deg} \, q_j \leq (m-1, n)$ such that 
\begin{align} K_1(z,w ) = \sum_{i=1}^{s}\frac{r_i(z)\overline{r_i(w)}}{p(z)\overline{p(w)}} \, \,  \, \text{and} \, \, \,  K_2(z,w)=\sum_{j=1}^{t}\frac{q_j(z)\overline{q_j(w)}}{p(z)\overline{p(w)}}. \end{align} 
Proofs of these facts can be found in \cite{bsv05,  b12, k10}.  For additional information about Agler decompositions of RIFs and related objects, including refined decompositions, construction methods, and interesting examples, we refer the readers to \cite{cw99, gw04, k08, k15, Kum89}. 
If all elements of the set 
\[\left\{z_1^k\frac{r_i}{p}, z_2^{\ell}\frac{q_j}{p} \, : |, k, \ell \in \mathbb{N} \cup \{0\}, 1 \leq i \leq s, 1 \leq j \leq t\right\}\] 
are pairwise orthogonal in $H^2(\mathbb{D}^2)$, then one can show that $\mathcal{H} \left(\frac{K_1(z,w)}{1-z_1\overline{w_1}}\right) \cap \mathcal{H}\left(\frac{K_2(z,w)}{1-z_2\overline{w_2}}\right)=\{0\}$ so defining $\mathcal{S}_1$ and $\mathcal{S}_2$ by \eqref{eqn:Skernels}  yields an orthogonal decomposition $\mathcal{K}_{\theta}=\mathcal{S}_1 \oplus \mathcal{S}_2$ of the desired form.  It then follows that  $s=n$, $t=m$, $\beta_1 = \{\frac{r_1}{p}, \ldots, \frac{r_n}{p}\}$ is an orthonormal basis of  $\mathcal{S}_1 \ominus z_1\mathcal{S}_1$ of form \eqref{eqn:S1basis}, and  $\beta_2 = \{\frac{q_1}{p}, \ldots, \frac{q_m}{p}\}$ is an orthonormal basis of  $\mathcal{S}_2 \ominus z_2\mathcal{S}_2$ of form \eqref{eqn:S2basis}.  The normality of the vectors in $\beta_1$ and $\beta_2$ follows from the discussion in \cite[Section 5.3]{PR}. 

Returning to the general setting and our fixed decomposition $\mathcal{K}_{\theta}= \mathcal{S}_1 \oplus \mathcal{S}_2$ and the basis $\beta_2$ given by \eqref{eqn:S2basis}, we note that the subspace $\mathcal{S}_2$ is invariant under the backward shift $S^{1 \, *}_{\theta}=S_{z_1}^*|_{\mathcal{S}_2}$ by \cite[Proposition 3.4]{b12}.  Thus, $S_{z_1}^*\frac{q_i}{p}$ can be written in the form of the sum in \eqref{unitaryS2} in a unique way for each $\frac{q_i}{p} \in \beta_2$.  
We can now state a constructive version of Theorem \ref{thm:M} that summarizes the proof of \cite[Theorem 3.2]{bg17}:

\begin{theorem} \label{thm:Mconstructive} Suppose $\theta=\lambda \frac{\tilde{p}}{p}$ is an RIF of degree $(m,n)$ with $m \geq 1$ and the decomposition $\mathcal{K}_{\theta}=\mathcal{S}_1 \oplus \mathcal{S}_2$ and basis $\beta_2=\{\frac{q_1}{p}, \ldots, \frac{q_m}{p}\}$ are as described above.  For each $1 \leq i \leq m$, let $h_{i1}, \ldots, h_{im}$ be the one-variable functions in $H^2_2(\mathbb{D})$ that satisfy \begin{align*} S_{z_1}^*\left(\frac{q_i}{p}\right) = \sum_{j=1}^m \frac{q_j}{p}h_{ij}.\end{align*}  Then each function $h_{ij}$ is a rational function of $z_2$ that is continuous on $\overline{\mathbb{D}}$ and \begin{align*} S_{\theta}^1 = \mathcal{U}_{\beta_2}T_{M_{\theta}^1}\mathcal{U}_{\beta_2}^* \end{align*} where $T_{M_{\theta}^1}$ is the matrix-valued Toeplitz operator with symbol $M_{\theta}^1:=\left[\, \,  \overline{h_{ij}}\, \, \right]$ and $\mathcal{U}_{\beta_2}$ is the unitary operator defined by \eqref{unitaryS2}.
\end{theorem}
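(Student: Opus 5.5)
The plan is to compute the adjoint $S^{1\,*}_\theta$ in the coordinates given by $\mathcal{U}_{\beta_2}$ and show that it equals $T_{M^1_\theta}^* = T_{(M^1_\theta)^*}$; taking adjoints then gives the statement.

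First we dispose of the regularity claim about the $h_{ij}$. Since $\mathcal{S}_2$ is invariant under $S_{z_1}^*$ by \cite[Proposition 3.4]{b12} and $\mathcal{U}_{\beta_2}$ is unitary onto $\mathcal{S}_2$, the functions $h_{ij}\in H^2_2(\mathbb{D})$ exist and are unique. To see that they are rational in $z_2$ and continuous on $\overline{\mathbb{D}}$, we compute $S_{z_1}^*(q_i/p)$ explicitly: it equals $(q_i/p - q_i(0,z_2)/p(0,z_2))/z_1$, which is rational with denominator $z_1 p(z)p(0,z_2)$. Because $p$ has no zeros on $\overline{\mathbb{D}}\times\mathbb{D}$ and only finitely many on $\mathbb{T}^2$, $p(0,\cdot)$ has no zeros in $\overline{\mathbb{D}}$ (using atorality and \cite[Theorem 4.9.1]{Rud69}). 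We then solve for the $h_{ij}$ by evaluating at $m$ generic values of $z_1$, or by comparing coefficients of $z_1^k$ in the polynomial identity $\sum_j q_j h_{ij} = p\cdot(\text{explicit rational})$. The $q_j$ are linearly independent over rational functions in $z_2$, since $\mathcal{U}_{\beta_2}$ is injective. Cramer's rule therefore expresses the $h_{ij}$ as rational functions of $z_2$. Continuity up to $\mathbb{T}$ is the delicate part. Here I would use that $h_{ij}=\langle \cdot\rangle$-type coefficients obtained from the isometry: the function $h_{ij}(z_2)$ is the $H^2_2$ function whose coefficients are the inner products $\langle S_{z_1}^*(q_i/p), z_2^k q_j/p\rangle$. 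Bounding these via the explicit rational structure shows the $h_{ij}$ are bounded rational functions, and a bounded rational function has no poles on $\overline{\mathbb{D}}$. Alternatively we can cite the corresponding argument in \cite[Theorem 3.2]{bg17}.

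Second, we verify the intertwining. For $\vec f=(f_1,\dots,f_m)\in H^2_2(\mathbb{D})^m$, each $f_i$ depends only on $z_2$, so $S_{z_1}^*$ commutes with multiplication by $f_i$. Indeed, on power series $S_{z_1}^*$ removes the $z_1$-free part and divides by $z_1$, and multiplication by a function of $z_2$ respects this. Hence
\[ S^{1\,*}_\theta\,\mathcal{U}_{\beta_2}\vec f = \sum_i f_i\, S_{z_1}^*\!\left(\tfrac{q_i}{p}\right) = \sum_j \tfrac{q_j}{p}\Big(\sum_i h_{ij} f_i\Big). \]
The vector $\big(\sum_i h_{ij}f_i\big)_j$ is exactly $H\vec f$, where $H:=[h_{ij}]^T=(M^1_\theta)^*$ with conjugation taken on $\mathbb{T}$. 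Since $H$ is analytic and bounded, $H\vec f\in H^2_2(\mathbb{D})^m$, so no projection is needed. Moreover $T_{(M^1_\theta)^*}=T_H$ equals $T_{M^1_\theta}^*$ by the standard matrix Toeplitz identity $T_\Phi^*=T_{\Phi^*}$. Thus $\mathcal{U}_{\beta_2}^* S^{1\,*}_\theta \mathcal{U}_{\beta_2} = T_{M^1_\theta}^*$.

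Third, we take adjoints, using that $S^1_\theta = P_{\mathcal{S}_2}S_{z_1}|_{\mathcal{S}_2}$ has adjoint $S_{z_1}^*|_{\mathcal{S}_2}$, which holds because $\mathcal{S}_2$ is $S_{z_1}^*$-invariant. This yields $S^1_\theta=\mathcal{U}_{\beta_2}T_{M^1_\theta}\mathcal{U}_{\beta_2}^*$.

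I expect the main obstacle to be the regularity of the $h_{ij}$ up to the boundary, namely excluding poles on $\mathbb{T}$. This is where the atorality of $p$ and the boundedness of $S_{z_1}^*$ must be combined carefully.
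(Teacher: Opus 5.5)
Your argument is the one the paper summarizes from \cite[Theorem 3.2]{bg17}. Since $S_\theta^{1\,*}=S_{z_1}^*|_{\mathcal{S}_2}$ and $S_{z_1}^*$ commutes with multiplication by functions of $z_2$, in the coordinates given by $\mathcal{U}_{\beta_2}$ the adjoint becomes multiplication by the analytic matrix $[h_{ij}]^T=(M^1_\theta)^*$, i.e.\ $T_{M^1_\theta}^*$, and then one takes adjoints. That part is correct, including the transpose bookkeeping. Your Cramer's-rule proof that the $h_{ij}$ are rational is also correct: after clearing denominators, a dependence of the $q_j$ over rational functions of $z_2$ would give a nonzero polynomial vector in the kernel of $\mathcal{U}_{\beta_2}$.

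The one step you have not actually proved is that the $h_{ij}$ have no poles on $\mathbb{T}$. You need this both for the stated continuity on $\overline{\mathbb{D}}$ and for $H\vec f\in H^2_2(\mathbb{D})^m$ in your second step. Your sketch does not establish it, because bounds on the Taylor coefficients $\langle S_{z_1}^*(q_i/p), z_2^k q_j/p\rangle$ do not control $\sup|h_{ij}|$.

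No delicate argument is needed, though. You already know $h_{ij}\in H^2_2(\mathbb{D})$, and a rational function in $H^2$ cannot have a pole at any $\tau\in\mathbb{T}$:
\begin{itemize}
\item By Fatou's lemma applied to $r\mapsto h_{ij}(r\zeta)$, we get $\int_{\mathbb{T}}|h_{ij}|^2\,dm\le\|h_{ij}\|_{H^2}^2<\infty$.
\item Near a pole of order $k\ge 1$ one would have $|h_{ij}(\zeta)|\ge c\,|\zeta-\tau|^{-k}$, and $|\zeta-\tau|^{-2}$ is not integrable on $\mathbb{T}$.
\end{itemize}
Hence all poles lie outside $\overline{\mathbb{D}}$, which gives continuity on $\overline{\mathbb{D}}$ and boundedness at once. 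No appeal to atorality or to boundedness of $S_{z_1}^*$ is required here.

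A small slip: to get $p(0,\cdot)\neq 0$ on $\overline{\mathbb{D}}$, the relevant fact is that $p$ has no zeros on the face $\mathbb{D}\times\mathbb{T}$, not on $\overline{\mathbb{D}}\times\mathbb{D}$. With the $H^2$ argument above you do not need this fact anyway; $p(0,\cdot)\not\equiv 0$ suffices.
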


Throughout this paper, the notation $M^1_{\theta}$, or $M^1_{\theta, \beta_2}$ when we wish to emphasize the choices of $\mathcal{S}_2$ and $\beta_2$, will always denote the matrix-valued function defined above.  If $m=0$, then Theorem \ref{thm:Mconstructive} does not apply. Instead, as discussed earlier, we must have $\mathcal{S}_2 =\{0\}$, which implies $S_{\theta}^1: \{0\} \rightarrow \{0\}$ and thus, $W(S_{\theta}^1) =\emptyset$.  Furthermore,  an analogous argument can be applied to $S^2_{\theta}$ if $n \geq 1$ using an orthonormal basis $\beta_1$ of $\mathcal{S}_1 \ominus z_1\mathcal{S}_1$ of form \eqref{eqn:S1basis}.  In this case, $S^2_{\theta}$ is unitarily equivalent via the unitary operator $\mathcal{U}_{\beta_1}$ to the matrix-valued Topelitz operator $T_{M_{\theta}^2}$ with symbol $M^2_{\theta}=M^2_{\theta, \beta_1}:=[\, \, \overline{\tilde{h}_{ij}} \, \, ]$ where for all $1 \leq i \leq n$, $\tilde{h}_{i1}, \ldots, \tilde{h}_{in} \in H^2_{1}(\mathbb{D})$ satisfy $S_{z_2}^*\left(\frac{r_i}{p}\right) = \sum_{j=1}^n \frac{r_j}{p}\tilde{h}_{ij}$.

We can connect the matrix-valued function $M^{1}_{\theta}$ to the one-variable case using slice functions.  Since $p$ has at most finitely many zeros on $\mathbb{T}^2$, the exceptional set for $\theta$, which is defined as   \begin{align} E_{\theta} :=\left\{\tau \in \mathbb{T} \, : \, p(t, \tau) = 0 \, \,  \text{for some} \, \,   t \in \mathbb{T}\right\}, \end{align}  is guaranteed to be finite.  Now fix an arbitrary $\tau \in \mathbb{T} \setminus E_{\theta}$.  The slice function $\theta_{\tau}$ defined by $\theta_\tau := \theta( \cdot, \tau)$ is a finite Blaschke product in the variable $z_1$  of degree $m$ and has a corresponding one-variable model space $\mathcal{K}_{\theta_{\tau}}$. By  \cite[Theorem 2.2]{bg17},  the restriction map $\mathcal{J}_{\tau}: \mathcal{H}(K_2) \rightarrow \mathcal{K}_{\theta_{\tau}}$ defined by $\mathcal{J}_{\tau}f:=f(\cdot, \tau)$ is a unitary operator, and thus  $\beta_{2, \tau} :=\{\mathcal{J}_{\tau}(\frac{q_i}{p}) \, : \, \frac{q_i}{p} \in \beta_2\} = \{\frac{q_1(\cdot, \tau)}{p(\cdot, \tau)}, \ldots, \frac{q_m(\cdot, \tau)}{p(\cdot, \tau)}\}$ is an orthonormal basis for $\mathcal{K}_{\theta_{\tau}}$. 
By the arguments in \cite[Theorems $3.2$ and $3.5$]{bg17}, the matrix $M_\theta(\tau)$ is the matrix representation of  the one variable compression of the shift $S_{\theta_\tau}$ with respect to the orthonormal basis $\beta_{2, \tau}$  since \[ 
\begin{aligned} 
\left \langle S_{\theta_\tau} \frac{q_j}{p}(\cdot, \tau),  \frac{q_i}{p}(\cdot, \tau) \right \rangle_{\mathcal{K}_{\theta_\tau}} &=  \left \langle  \frac{q_j}{p}(\cdot, \tau), S_{z_1}^* \frac{q_i}{p}(\cdot, \tau) \right \rangle_{\mathcal{K}_{\theta_\tau}}  \\
& = \sum_{k=1}^m \left \langle  \frac{q_j}{p}(\cdot, \tau), \frac{q_k}{p}(\cdot, \tau) h_{ik}(\tau) \right \rangle_{\mathcal{K}_{\theta_\tau}}  \\
&= \overline{h_{ij}(\tau)} = M^1_\theta(\tau)_{ij}.
\end{aligned}
 \]
Hence there exists a unitary matrix $U_{\tau}$ such that $M^1_{\theta}(\tau) = U_{\tau} M_{\theta_{\tau}}U_{\tau}^*$, where  $M_{\theta_{\tau}}$ is the matrix representation of $S_{\theta_{\tau}}$ with respect to the Takenaka-Malmquist-Walsh basis given by \eqref{eqn:SB}.  This shows that the eigenvalues of $M^1_{\theta}(\tau)$ are the zeros of $\theta_{\tau}$.

\begin{example} Let $p(z)=1-\frac{1}{2}z_1^2z_2$, and let $\theta$ be the degree $(2,1)$ RIF $\theta(z)=\frac{\widetilde{p}(z)}{p(z)}=\frac{z_1^2z_2-\frac{1}{2}}{1-\frac{1}{2}z_1^2z_2}$. Then straightforward computations show that 
\begin{align*} \frac{1-\theta(z)\overline{\theta(w)}}{(1-z_1\overline{w_1})(1-z_2\overline{w_2})}=\frac{r_1(z)\overline{r_1(w)}}{p(z)\overline{p(w)}(1-z_1\overline{w_1})} + \frac{q_1(z)\overline{q_1(w)}+q_2(z)\overline{q_2(w)}}{p(z)\overline{p(w)}(1-z_2\overline{w_2})}\end{align*}  for $r_1(z)=\frac{\sqrt{3}}{2}z_1^2$, $q_1(z)=\frac{\sqrt{3}}{2},$ and $q_2(z)=\frac{\sqrt{3}}{2}z_1$.  Therefore, \begin{align*} K_1(z,w)=\frac{r_1(z)\overline{r_1(w)}}{p(z)\overline{p(w)}} \, \, \text{and} \,  \,  K_2(z,w)=\frac{q_1(z)\overline{q_1(w)}}{p(z)\overline{p(w)}}+\frac{q_2(z)\overline{q_2(w)}}{p(z)\overline{p(w)}}
\end{align*} satisfy \eqref{Agler} and thus form a pair of Agler kernels for $\theta$.    The power series expansions at $0$ of $z_1^j\frac{r_1}{p}$, $z_2^k\frac{q_1}{p}$, and $z_2^{\ell}\frac{q_2}{p}$ have the forms \begin{align*} \sum_{i=0}^{\infty} a_iz_1^{2(i+1)+j}z_2^i,  \quad \sum_{i=k}^{\infty}b_iz_1^{2(i-k)}z_2^i,  \quad \text{and} \quad \sum_{i=\ell}^{\infty} c_iz_1^{2(i-\ell)+1}z_2^i,\end{align*} respectively, where the scalars $a_i, b_i$, and $c_i$ depend on the choices of $j$, $k$, and $\ell$.  Comparing powers, it is clear that the elements of $\{z_1^j\frac{r_1}{p}, z_2^k\frac{q_1}{p}, z_2^{\ell}\frac{q_2}{p} \, : \, j, k, \ell \in \mathbb{N} \cup \{0\}\}$ will be pairwise orthogonal in $H^2(\mathbb{D}^2)$ since they have no terms with matching pairs of powers.  Thus, we obtain the orthogonal decomposition $\mathcal{K}_{\theta}=\mathcal{S}_1 \oplus \mathcal{S}_2$ where \begin{align*} \mathcal{S}_1:=\mathcal{H}\left(\frac{r_1(z)\overline{r_1(w)}}{p(z)\overline{p(w)}(1-z_1\overline{w_1})}\right) \quad  and \quad \mathcal{S}_2:=\mathcal{H}\left(\frac{q_1(z)\overline{q_1(w)}+q_2(z)\overline{q_2(w)}}{p(z)\overline{p(w)}(1-z_2\overline{w_2})}\right)\end{align*} are closed subspaces that are $S_{z_1}$-invariant and $S_{z_2}$-invariant respectively and $\beta_1=\left\{\frac{r_1}{p}\right\}$ and $\beta_2=\left\{\frac{q_1}{p}, \frac{q_2}{p}\right\}$ are orthonormal bases of $\mathcal{S}_{1} \ominus z_1\mathcal{S}_1$ and $\mathcal{S}_2 \ominus z_2\mathcal{S}_2$ respectively.

To find $M^1_{\theta}$ with respect to $\beta_2$, first  recall that the adjoint of the shift $S_{z_1}^*$ is defined by 
\begin{align} \label{eqn:shiftformula} \big(S_{z_1}^*f \big)(z_1, z_2) = \frac{f(z_1, z_2) - f(0,z_2)}{z_1},\end{align}
for $f \in H^2(\mathbb{D}^2).$  Thus, \begin{align*} 
S_{z_1}^*\frac{q_1}{p} &= \frac{\frac{\sqrt{3}}{2(1-\frac{1}{2}z_1^2z_2)}-\frac{\sqrt{3}}{2(1-0)}}{z_1}= \frac{\sqrt{3}z_1z_2}{4(1-\frac{1}{2}z_1^2z_2)}=0\left(\frac{q_1}{p}\right)+\frac{1}{2}z_2\left(\frac{q_2}{p}\right)\, \, \text{and} \\
S_{z_1}^*\frac{q_2}{p} &=\frac{\frac{\sqrt{3}z_1}{2(1-\frac{1}{2}z_1^2z_2)}-\frac{0}{2(1-0)}}{z_1}=\frac{\sqrt{3}}{2(1-\frac{1}{2}z_1^2z_2)}=1\left(\frac{q_1}{p}\right) + 0 \left(\frac{q_2}{p}\right), \end{align*} 
so, by Theorem \ref{thm:Mconstructive}, $M^1_{\theta}(z_2)=\begin{bmatrix} 0 & \frac{1}{2}\overline{z_2} \\ 1 & 0 \end{bmatrix}$. It is easy to see that the exceptional set $E_{\theta}$ is empty, and  that, for any $\tau \in \mathbb{T}$, the eigenvalues of $M^1_{\theta}(\tau)$ are indeed the zeros of $\theta_{\tau}(z_1)=\frac{z_1^2\tau-\frac{1}{2}}{1-\frac{1}{2}z_1^2\tau}.$

\end{example}


\section{The Degree $(1,n)$ Case} \label{sec:1n}
In this section, we explore two-variable compressed shifts and their properties for RIFs with degree $(1,n)$.  First, Theorem \ref{thm:Mconstructive} has the following simple interpretation when $\theta$ is an RIF of degree $(1,n)$:
\begin{proposition} \label{prop:n1} 
Let $\theta =\lambda \frac{\tilde{p}}{p}$ be an RIF of degree $(1,n)$. Then the matrix $M^1_{\theta}$ from Theorem \ref{thm:Mconstructive} is given by
\begin{equation} \label{eqn:psi} \overline{M^1_\theta(z_2)}  = \frac{(-S^*_{z_1}p)(z_2)}{p(0, z_2)}.
\end{equation}
This formula for $M_\theta^1$ holds for any decomposition of $\mathcal{K}_\theta$ into $\mathcal{S}_1 \oplus \mathcal{S}_2$ and orthonormal basis $\beta_2$ of $\mathcal{S}_2 \ominus z_2 \mathcal{S}_2$.
 \end{proposition}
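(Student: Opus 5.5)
The plan is a direct computation with Theorem \ref{thm:Mconstructive}, using the fact that when $m=1$ everything is scalar. Since $\deg \theta = (1,n)$, we have $m=1$, so $\mathcal{S}_2 \ominus z_2\mathcal{S}_2$ is one-dimensional. By \eqref{eqn:S2basis} it has an orthonormal basis $\beta_2 = \{\frac{q}{p}\}$ with $\deg q \leq (0,n)$. Hence $q = q(z_2)$ is a polynomial in $z_2$ alone, and $q \not\equiv 0$ because $\frac{q}{p}$ has norm one. Likewise, since $\deg_1 p \leq 1$, I write
\[ p(z_1,z_2) = p(0,z_2) + z_1 p_1(z_2), \qquad \text{where } p_1(z_2) = (S_{z_1}^* p)(z_2) \]
by \eqref{eqn:shiftformula}.

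Next I compute $S_{z_1}^*\frac{q}{p}$ using \eqref{eqn:shiftformula}. Because $q$ does not depend on $z_1$,
\[ S_{z_1}^*\frac{q}{p} = \frac{1}{z_1}\left( \frac{q(z_2)}{p(z)} - \frac{q(z_2)}{p(0,z_2)}\right) = \frac{q(z_2)\,(p(0,z_2)-p(z))}{z_1\, p(z)\, p(0,z_2)} = \frac{q}{p}\cdot \frac{-p_1(z_2)}{p(0,z_2)} . \]
So the single function in Theorem \ref{thm:Mconstructive} is $h_{11} = -\frac{p_1}{p(0,\cdot)}$. The representation in \eqref{unitaryS2} is unique, so this is the $h_{11}$ of that theorem, and conjugating gives $\overline{M^1_\theta(z_2)} = \frac{(-S^*_{z_1}p)(z_2)}{p(0,z_2)}$, which is \eqref{eqn:psi}.

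The step that needs care is checking that $h_{11}$ is a legitimate element of $H^2_2(\mathbb{D})$, continuous on $\overline{\mathbb{D}}$. This amounts to showing that $p(0,z_2) \neq 0$ for $|z_2| \leq 1$.
\begin{itemize}
\item For $|z_2|<1$, this holds because $p$ has no zeros on $\mathbb{D}^2$.
\item For $|z_2| = 1$, I will use the fact recalled in the introduction that an atoral $p$ with no zeros on $\mathbb{D}^2$ has no zeros on the face $\mathbb{D}\times\mathbb{T}$, applied at the point $(0,\tau)$.
\end{itemize}
Thus $h_{11}$ is a rational function of $z_2$ whose poles lie off $\overline{\mathbb{D}}$.

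Finally, independence from the choices is visible from the formula. The right-hand side involves only $p$, which is fixed up to a constant multiple by the atoral normalization, and that constant cancels in the quotient. In particular it does not involve $q$. So any decomposition $\mathcal{K}_\theta = \mathcal{S}_1\oplus\mathcal{S}_2$, and any choice of unit vector $\frac{q}{p}$ (including a different unimodular multiple), yields the same $M^1_\theta$.
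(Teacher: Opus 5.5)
Your proposal is correct and follows the same route as the paper: since $\deg q\le(0,n)$, $q$ depends only on $z_2$, and applying \eqref{eqn:shiftformula} to $\frac{q}{p}$ gives $S_{z_1}^*\frac{q}{p}=\frac{q}{p}\cdot\frac{-(S^*_{z_1}p)(z_2)}{p(0,z_2)}$, after which Theorem \ref{thm:Mconstructive} yields \eqref{eqn:psi}. Your extra checks, namely that $p(0,z_2)\neq 0$ on $\overline{\mathbb{D}}$ (which places $h_{11}$ in $H^2_2(\mathbb{D})$ so that uniqueness applies) and that the formula does not involve $q$ (which gives independence from the decomposition and basis), spell out what the paper's short proof leaves implicit.
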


\begin{proof} Let $\{ \tfrac{q}{p}\}$ be a basis of $\mathcal{S}_2 \ominus z_2 \mathcal{S}_2$ with $\deg q \le (0,n)$.  Then applying \eqref{eqn:shiftformula}, \[\left(  S_{z_1}^*\tfrac{q}{p} \right)(z) = q(z_2)\left( \frac{ \tfrac{1}{p(z)} - \tfrac{1}{p(0, z_2)}}{z_1}\right)= \frac{q}{p}(z) \cdot \frac{-(S^*_{z_1}p)(z_2)}{p(0, z_2)}.\]
Thus, by Theorem  \ref{thm:Mconstructive}, $S_\theta^1$ is unitarily equivalent to the Toeplitz operator $T_{{M}^1_\theta}$, where $M^1_\theta$ is defined in \eqref{eqn:psi}.
\end{proof}

This allows us to immediately answer a number of questions about $S_\theta^1$ in the degree $(1,n)$ case.

\begin{corollary} \label{cor:n1nr} Let $\theta =\lambda \frac{\tilde{p}}{p}$ be an RIF of degree $(1,n)$, let $M^1_\theta$ be defined as in \eqref{eqn:psi}, and let $M^1_\theta(\mathbb{D})= \{ M^1_\theta(z): z\in \mathbb{D}\}$. Then the following hold:
\begin{itemize}
\item[i.] The spectrum of $S_\theta^1$ is  the closure of $M^1_\theta(\mathbb{D})$.
\item[ii.] The numerical range of $S_\theta^1$ is the convex hull of $M^1_\theta(\mathbb{D})$.
\item[iii.] The numerical radius of $S_\theta^1$ is the $\displaystyle \sup_{z\in \mathbb{D}} |M^1_\theta(z)|$.
\end{itemize}
\end{corollary}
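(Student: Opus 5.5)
The plan is to reduce everything to classical facts about scalar Toeplitz operators with analytic (or co-analytic) symbols. By Proposition \ref{prop:n1} and Theorem \ref{thm:Mconstructive}, $S^1_\theta$ is unitarily equivalent to the scalar Toeplitz operator $T_{M^1_\theta}$ on $H^2_2(\mathbb{D})$. Its symbol is $M^1_\theta=\overline{h}$, where
\[ h(z_2)=\frac{-(S^*_{z_1}p)(z_2)}{p(0,z_2)}. \]

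\textbf{Step 1: regularity of $h$.} Since $\deg_1 p\le 1$, the numerator $-(S^*_{z_1}p)(z_2)$ is a polynomial in $z_2$. The denominator $p(0,z_2)$ has no zeros on $\overline{\mathbb{D}}$: at points of $\mathbb{D}$ because $p$ has no zeros in $\mathbb{D}^2$, and at points of $\mathbb{T}$ because atoral $p$ has no zeros on $\mathbb{D}\times\mathbb{T}$. Hence $h$ is a rational function holomorphic on a neighborhood of $\overline{\mathbb{D}}$. The statement's $M^1_\theta(z)$ for $z\in\mathbb{D}$ should be read as $\overline{h(z)}$, i.e.\ the natural co-analytic extension of the symbol, and I will use that reading throughout. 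In particular
\[ T_{M^1_\theta}=T_{\overline h}=T_h^*, \]
and $T_h$ is simply multiplication by $h$ on $H^2$.

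\textbf{Step 2: spectrum.} For $h\in H^\infty$, $T_h-\lambda=T_{h-\lambda}$ is invertible exactly when $h-\lambda$ is invertible in $H^\infty$. This is the standard fact that an analytic Toeplitz operator with no bounded inverse symbol is not bounded below (use normalized reproducing kernels $k_z$ near points where $h(z)\to\lambda$), and the Wiener-type sufficiency for bounded-below symbols. So $\sigma(T_h)=\overline{h(\mathbb{D})}$. Taking adjoints gives
\[ \sigma(S^1_\theta)=\overline{\{\overline{h(z)}\}}=\overline{M^1_\theta(\mathbb{D})}, \]
which is (i).

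\textbf{Step 3: numerical range.} For the inclusion $\supseteq$, let $k_z$ be the normalized Szeg\H{o} kernel. Then
\[ \langle T_{\overline h}k_z,k_z\rangle=\langle k_z,T_hk_z\rangle=\overline{h(z)}, \]
so $M^1_\theta(\mathbb{D})\subseteq W$, and convexity of $W$ (Toeplitz--Hausdorff) gives $\mathrm{conv}\,M^1_\theta(\mathbb{D})\subseteq W$.

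For the reverse inclusion, split into two cases. If $h$ is constant, both sides are a single point. If $h$ is nonconstant, I will invoke the Brown--Halmos/Klein theorem: the numerical range of a Toeplitz operator with nonconstant symbol $\varphi$ is the interior of the closed convex hull of the essential range of $\varphi$. Here the essential range is $\overline{h(\mathbb{T})}$. By the maximum principle and continuity of $h$ on $\overline{\mathbb{D}}$, the closed convex hull of $\overline{h(\mathbb{T})}$ equals $\overline{\mathrm{conv}\,h(\mathbb{D})}$. Moreover, $\mathrm{conv}\,h(\mathbb{D})$ is open, since $h(\mathbb{D})$ is open by the open mapping theorem and the convex hull of an open set is open. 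An open convex set equals the interior of its closure, so $W(T_h)=\mathrm{conv}\,h(\mathbb{D})$. Conjugating then gives (ii).

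\textbf{Step 4: numerical radius.} Part (iii) follows from (ii), since $\sup$ of $|\cdot|$ over a convex hull equals the $\sup$ over the generating set.

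The main obstacle is Step 3: correctly citing the Klein/Brown--Halmos description of $W(T_\varphi)$ and matching interior versus hull. The delicate point is that $W$ is open and so excludes boundary values. If that citation is awkward, my fallback is a direct argument. Write
\[ \langle T_{\overline h}f,f\rangle=\int_{\mathbb{T}}\overline h\,|f|^2\,dm, \]
which is a nontrivial average of values of $\overline{h}$. When $h$ is nonconstant, such an average cannot lie on a supporting line of $\mathrm{conv}\,\overline{h(\mathbb{T})}$, because $\mathrm{Re}(e^{i\alpha}h)$ is harmonic and nonconstant and so cannot equal its maximum on a set of positive measure. This shows $W$ lies in the open hull, and combined with the inclusion $\supseteq$ from the kernel argument it gives (ii).
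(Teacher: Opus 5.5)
Your proposal is correct and follows the paper's route. Like the paper, you reduce via Proposition \ref{prop:n1} to the scalar co-analytic Toeplitz operator $T_{\overline{h}}$ and invoke the standard spectral and numerical-range facts for such operators (the paper simply cites Klein's Corollary 2 and the discussion before it); your kernel, open-mapping and interior-of-hull arguments fill in the details. One small slip in Step 2: it is false that $T_{h-\lambda}$ fails to be bounded below whenever $h-\lambda$ is not invertible in $H^\infty$ (for instance $T_{z}$ is an isometry), so the reproducing-kernel argument should instead be applied to the adjoint, where $T_{\overline{h}}k_z=\overline{h(z)}k_z$ shows directly that every $\overline{h(z)}$ is an eigenvalue of $T_{\overline{h}}$.
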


\begin{proof} These follow immediately from known properties of co-analytic Toeplitz operators with bounded symbols, see, for example,  Corollary $2$ and the discussion before it in \cite{klein}.\end{proof} 

\begin{example} To illustrate Corollary \ref{cor:n1nr}, consider the following RIFs:
\begin{equation} \label{eqn:exphi} \theta(z_1, z_2) = \frac{2z_1 z_2-z_1-z_2}{2-z_1-z_2} \ \  \text{ and } \ \ \phi(z_1, z_2) = \frac{ 4z_1z_2^2-z_2^2-3z_1z_2-z_2+z_1}{4-z_1-3z_2-z_1z_2+z_2^2}.\end{equation}
A simple computation using \eqref{eqn:psi} reveals that 
\[ M^1_\theta(z_2) = \frac{1}{2-\bar{z}_2} \ \text{ and } M^1_\phi(z_2) = \frac{1+\bar{z}_2}{4-3\bar{z}_2+\bar{z}_2^2}.\]
The closures of $M^1_\theta(\mathbb{D})$ and $M^1_\phi(\mathbb{D})$ are both graphed below in Figure \ref{fig:Mtheta}.
\begin{figure}[ht] 
    \subfigure[Closure of $M^1_\theta(\mathbb{D})$]
      {\includegraphics[width=0.4 \textwidth]{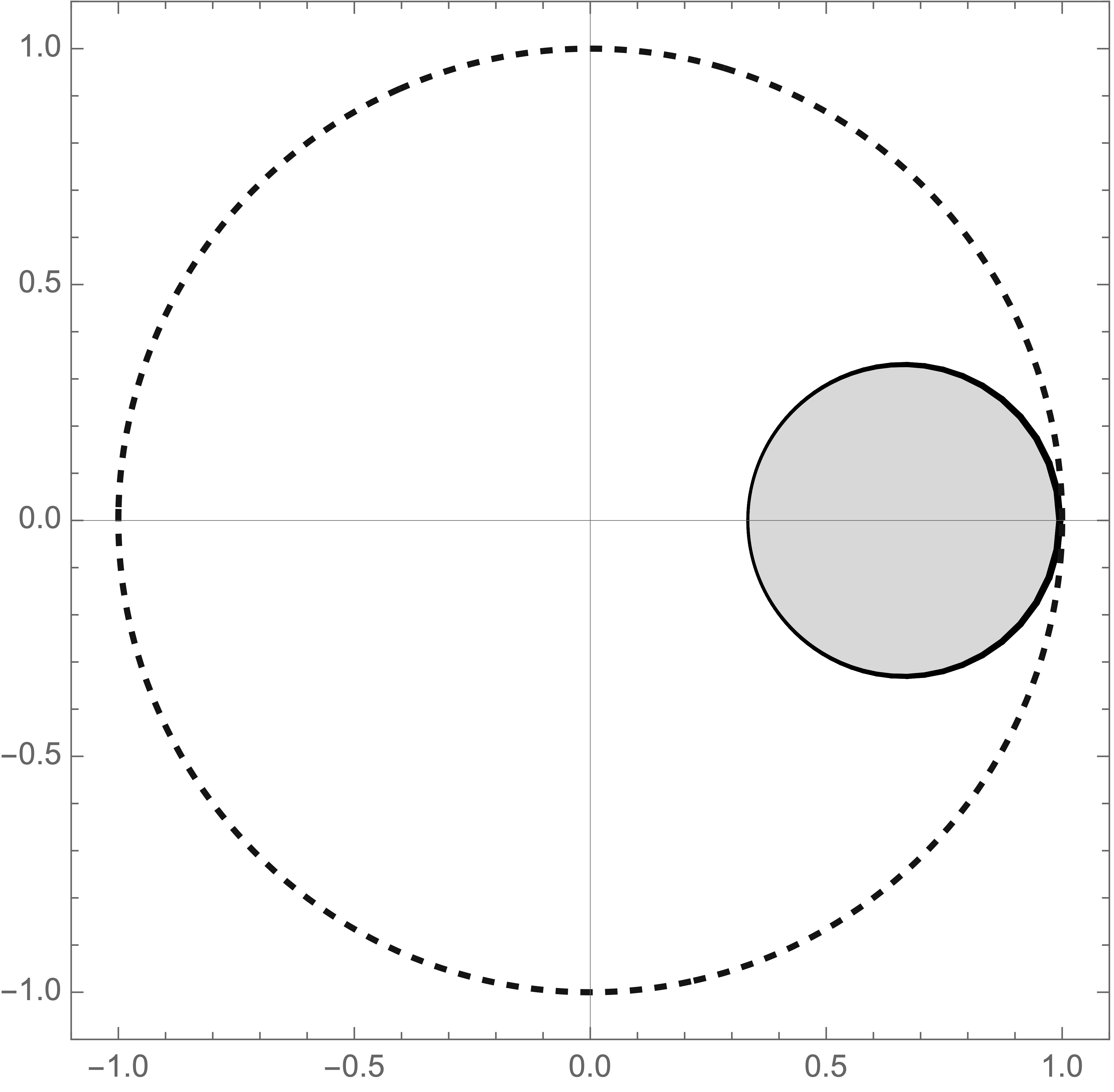}}
    \quad 
    \subfigure[Closure of $M^1_\phi(\mathbb{D})$]
      {\includegraphics[width=0.4 \textwidth]{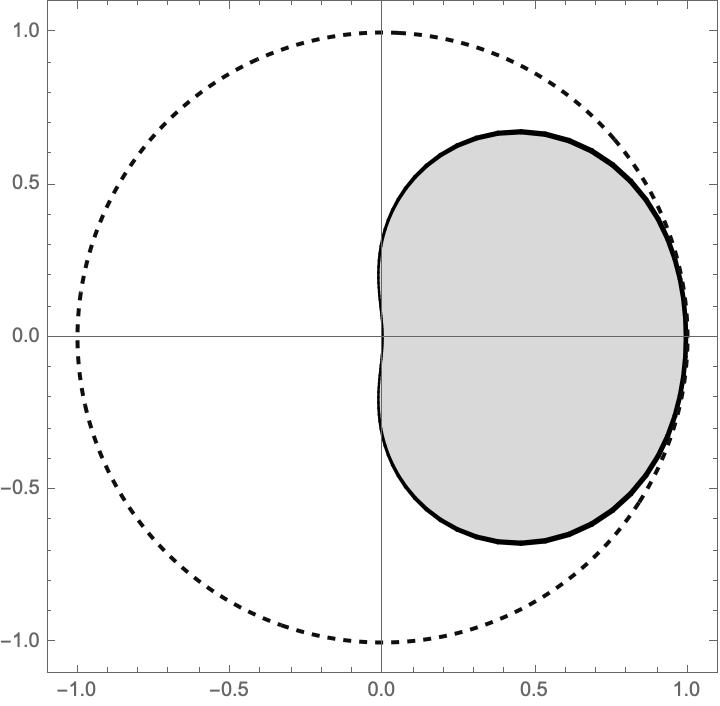}}
       \caption{\textsl{The closures of $M^1_\theta(\mathbb{D})$ and $M^1_\phi(\mathbb{D})$ in $\overline{\mathbb{D}}$ for $\theta$ and $\phi$ from \eqref{eqn:exphi}.}}\label{fig:Mtheta}
\end{figure}
 By Corollary \ref{cor:n1nr}, these plots align with the spectra of $S_\theta^1$ and $S_\phi^1$ and taking the convex hulls of their interiors gives the numerical ranges $W(S_\theta^1)$ and $W(S_\phi^1)$. The figures also indicate that the numerical radii satisfy $w(S_\theta^1)=1$ and $w(S_\phi^1)=1$. This aligns with \cite[Theorem 3.7]{bg17}, since both $\theta$ and $\phi$ have a singularity at $(1,1)$ on $\mathbb{T}^2$. One can also show that $M^1_\theta(\mathbb{D})$ (and hence $W(S_\theta^1)$) is the open Euclidean disk centered at $\tfrac{2}{3}$ with radius $\tfrac{1}{3}$. 
 
This observation about $W(S_\theta^1)$ for $\theta$ from \eqref{eqn:exphi} generalizes in the following way: \emph{each open Euclidean disk in $\mathbb{D}$ can be realized as  $W(S_\theta^1)$ for some RIF $\theta$ with degree $(1,1)$}. To see this, fix $r \in (0,1)$ and $c \in \mathbb{D}$ such that the open disk centered at $c$ with radius $r$, denoted here by $D_r(c)$, is inside $\mathbb{D}$. Define a polynomial $p(z) = 1 -\bar{c}z_1 - r z_1z_2$. Then $p$ does not vanish on $\mathbb{D}^2$ and it is easy to check that $p, \tilde{p}$ have no common factors. Define the RIF
\begin{equation} \label{eqn:cr} \theta(z) = \frac{\tilde{p}}{p}(z) = \frac{z_1z_2 -cz_2 -r}{1 -\bar{c}z_1 -r z_1 z_2}.\end{equation}
Then a short computation gives $M^1_\theta(z_2) =  c + r \bar{z}_2,$ which in turn implies that   
\[ W(S_\theta^1)= M^1_\theta(\mathbb{D})= D_r(c).\]
\end{example}

As discussed in the following remark, Corollary \ref{cor:n1nr} implies that the numerical range $W(S_\theta^1)$ is generally open for an RIF $\theta$ with degree $(1,n)$. 

\begin{remark} \label{rem:open} In the one-variable case, a compression of the shift $S_B$ associated to a finite Blaschke product $B$ acts on a finite-dimensional space. Hence,  the numerical range $W(S_B)$ is always closed. The two variable situation is very different.

Let $\theta =\lambda \frac{\tilde{p}}{p}$ be an RIF of degree $(1,n)$.  Corollary \ref{cor:n1nr} says that $W(S_\theta^1)$ is the convex hull of  $M^1_\theta(\mathbb{D}).$
When $M^1_\theta$ is constant,  $M^1_\theta(\mathbb{D})$ (and hence $W(S_\theta^1)$) is clearly closed. Note that $M^1_\theta$ is constant if and only if there is a polynomial $p_1$ and a constant $k$ with
\[ p(z_1, z_2) = p_1(z_2) + z_1 k p_1(z_2) = p_1(z_2) (1 + k z_1).\]
Since $\deg \theta = (1, n)$, $p$ also being of that form is equivalent to $\theta$ being the product of a  degree $1$ Blaschke product in $z_1$ and a degree $n$ Blaschke product in $z_2.$ However, once $\theta$ is not of that form (so $M^1_\theta$ is nonconstant), the open mapping theorem implies that $M^1_\theta(\mathbb{D})$ and hence, its convex hull $W(S_\theta^1)$, is open. 

Thus, the degree $(1,n)$ situation bifurcates into two cases: if  $\theta$ is the product of a degree $1$ Blaschke product in $z_1$ and a degree $n$ Blaschke product in $z_2$, then $W(S_\theta^1)$ is closed. Otherwise,  $W(S_\theta^1)$ is open. This answers Question $3$ for degree $(1,n)$ RIF $\theta$.
\end{remark}

We can also use the formula from Proposition \ref{prop:n1} to begin our investigation of Question $1$ in the setting of degree $(1,n)$ RIF $\theta$.

\begin{proposition} \label{prop:n1B} Let $\theta =\lambda_1 \frac{\tilde{p}}{p}$ and $\phi =\lambda_2 \frac{\tilde{q}}{q}$ be RIFs of degree $(1,n)$ and $(1, k)$  respectively. Then $M^1_\theta= M^1_\phi$  if and only if there exist finite Blaschke products $B_1$, $B_2$ such that $\theta(z) B_1(z_2) = \phi(z) B_2(z_2)$.
\end{proposition}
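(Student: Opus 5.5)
The plan is to make everything explicit through the formula in Proposition \ref{prop:n1}. Since $\deg_1 p = \deg_1 q \le 1$, write
\[ p(z) = p_0(z_2) + z_1 p_1(z_2), \qquad q(z) = q_0(z_2) + z_1 q_1(z_2). \]
Then $S^*_{z_1}p = p_1$, and \eqref{eqn:psi} gives $\overline{M^1_\theta} = -p_1/p_0$ and $\overline{M^1_\phi} = -q_1/q_0$. The condition $M^1_\theta = M^1_\phi$ is therefore equivalent to the single identity $p_1 q_0 = q_1 p_0$ of one-variable polynomials.

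\textbf{The ``if'' direction.} Suppose $\theta B_1 = \phi B_2$. The first step is to reduce to the case where this identity transfers to $p$ and $q$. Write $B_j = \mu_j \tilde b_j / b_j$ with $b_j$ polynomials in $z_2$ having no zeros in $\overline{\mathbb{D}}$. Then
\[ \tilde p\, \tilde b_1\, q\, b_2 = c\, \tilde q\, \tilde b_2\, p\, b_1 \]
for a unimodular constant $c$.

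A cleaner route is to use the factorization of the denominators directly. Since $p$ is atoral, $p$ and $\tilde p$ share no factor. After cancelling common $z_2$-only factors, the denominator of $\theta B_1$ in lowest terms is $p b_1 / \gcd$, and similarly on the other side. Comparing denominators shows that $p\, r_1(z_2) = q\, r_2(z_2)$ for some one-variable polynomials $r_1, r_2$, up to a constant. Here one must check that $\tilde p$ cannot absorb a $z_2$-factor of $b_2$ in a way that changes the $z_1$-dependence. This holds because any factor of $b_2$ has no zeros on $\mathbb{D}$, while zeros of factors of $\tilde p$ coming from a $z_2$-only factor would be reflections.

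Once $p r_1 = q r_2$ is established, it gives $p_0 r_1 = q_0 r_2$ and $p_1 r_1 = q_1 r_2$. Hence $p_1/p_0 = q_1/q_0$, so $M^1_\theta = M^1_\phi$.

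\textbf{The ``only if'' direction.} Suppose $p_1 q_0 = q_1 p_0$. Let $g = \gcd(p_0, p_1)$ and $h = \gcd(q_0, q_1)$. Then $p = g\,(a_0 + z_1 a_1)$ and $q = h\,(c_0 + z_1 c_1)$ with coprime pairs, and the identity forces $(a_0, a_1) = \kappa (c_0, c_1)$ for a constant $\kappa$. Thus $p = \kappa\, g(z_2)\, P$ and $q = h(z_2)\, P$ for the common polynomial $P = c_0 + z_1 c_1$.

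Since $p$ has no zeros in $\mathbb{D}^2$ and, being atoral, no zeros on $\mathbb{D}\times\mathbb{T}$, the polynomials $g$ and $h$ have no zeros in $\mathbb{D}$. They may, however, have zeros on $\mathbb{T}$, and this needs care. The reflections then satisfy $\tilde p = \tilde g\, \tilde P$ (reflected at the appropriate degrees), and likewise for $\tilde q$. This gives
\[ \theta = \lambda_1 \frac{\tilde g}{g}\cdot \frac{\tilde P}{P}\, z_2^{a}, \qquad \phi = \lambda_2 \frac{\tilde h}{h}\cdot \frac{\tilde P}{P}\, z_2^{b} \]
for suitable integers $a, b$.

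Atorality excludes zeros of $g$ on $\mathbb{T}$: a factor $(z_2 - \tau)$ would be shared with $\tilde p$ up to a constant, giving a common factor of $p$ and $\tilde p$. Hence $\tilde g/g$ and $\tilde h/h$ are finite Blaschke products. Taking $B_1 = z_2^{b}\,\tilde h/h$ and $B_2 = z_2^{a}\,\tilde g/g$, adjusted by unimodular constants absorbed into one of them, yields $\theta B_1 = \phi B_2$.

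\textbf{Main obstacle.} I expect the delicate point to be the bookkeeping of reflection degrees and the $z_2^{a}$ monomial factors, together with the lowest-terms denominator comparison in the ``if'' direction. I would handle that direction most safely as follows. Write both $\theta$ and $\phi$ in the factored form $(\text{one-variable Blaschke in } z_2)\times \tilde P/P$, where $P$ has no $z_2$-only factor. This form follows from the same $\gcd$ argument applied to each function separately. Then $\theta B_1 = \phi B_2$ forces the $z_1$-dependent parts $\tilde P_\theta/P_\theta$ and $\tilde P_\phi/P_\phi$ to agree up to a one-variable inner factor. Comparing zero sets in $z_1$ for fixed generic $z_2$ then gives $P_\theta = P_\phi$ up to a constant.
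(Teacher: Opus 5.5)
Your proof is correct. Both directions go through an identity $p\,r_1(z_2)=q\,r_2(z_2)$, as in the paper, but the ``only if'' direction is organized differently.

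\textbf{The ``if'' direction.} This is essentially the paper's argument. The paper observes that $r_1p$ and $r_2q$ are both atoral denominators of the same RIF, so uniqueness of atoral denominators gives $r_1(z_2)p=c\,r_2(z_2)q$. You instead compare lowest-terms denominators after using atorality of $p$ to show that any cancellation is a $z_2$-only factor. In fact nothing cancels, since $pb_1$ is itself atoral. Either way, any such factor is simply absorbed into $r_1,r_2$, so your worry about $\tilde p$ ``absorbing'' a factor of $b_2$ is unnecessary.

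\textbf{The ``only if'' direction.} Here the routes genuinely differ.
\begin{itemize}
\item The paper notes that $p_1q_0=q_1p_0$ is the same as $q(0,z_2)\,p(z)=p(0,z_2)\,q(z)$ (add $p_0q_0$ to both sides of $z_1p_1q_0=z_1q_1p_0$). It then takes $p(0,z_2)$ and $q(0,z_2)$, which are zero-free on $\overline{\mathbb{D}}$, as the Blaschke denominators, with monomials $z_2^{\ell_j}$ to fix the reflection degrees. This is shorter and needs no gcds.
\item You split off the contents $g=\gcd(p_0,p_1)$ and $h=\gcd(q_0,q_1)$ to get $p=\kappa gP$ and $q=hP$ with a common primitive part $P$. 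This costs some bookkeeping but buys more structure. It shows that $\theta$ and $\phi$ have literally the same $z_1$-dependent factor $\tilde P/P$. It also builds $B_1,B_2$ from the genuine one-variable inner factors $\tilde h/h$ and $\tilde g/g$, so they are minimal. By contrast, the paper's choice generally carries a redundant common factor $\tilde c_0/c_0$ on both sides; for instance, when $\theta=\phi$ it yields $B_1=B_2=\tilde p_0/p_0$ rather than constants.
\end{itemize}

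\textbf{A minor redundancy.} The zero-freeness of $p$ on $\mathbb{D}\times\mathbb{T}$, which you already cite, directly rules out zeros of $g$ and $h$ on $\mathbb{T}$. Your separate atorality argument for this is correct but not needed.
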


\begin{proof} By Proposition \ref{prop:n1}, the functions $M^1_\theta$ and $M^1_\phi$ are equal if and only if
\begin{equation} \label{eqn:Mpq} \frac{-(S^*_{z_1}p)(z_2)}{p(0, z_2)} =  \frac{-(S^*_{z_1}q)(z_2)}{q(0, z_2)}.\end{equation}

For the forward direction, assume \eqref{eqn:Mpq} and observe that it implies
\begin{equation} \label{eqn:qp} q(0, z_2) \left(p(0, z_2) +z_1(S^*_{z_1}p)(z_2)\right) = p(0, z_2) \left(q(0, z_2) +z_1(S^*_{z_1}q)(z_2)\right),\end{equation}
It's worth noting that \eqref{eqn:qp} implies that $p$ and $q$ have the same degree in $z_1$. Set  $r_1(z_2):=q(0,z_2)$ and $r_2(z_2):=p(0,z_2)$.  
Then \eqref{eqn:qp} is equivalent to
\begin{equation}   \label{eqn:qpr2} r_1(z_2) p(z) = r_2(z_2) q(z).\end{equation}
Since $r_1, r_2$ have no zeros on $\overline{\mathbb{D}}$, they are denominators  of finite Blaschke products. Set
\[ C_1(z_2) = \frac{\tilde{r}_1(z_2)}{r_1(z_2)} \ \ \text{ and } C_2(z_2) = \frac{\tilde{r}_2(z_2)}{r_2(z_2)},\]
where the $\tilde{}$ operation is with respect to the degree of each $r_j$ in $z_2.$
 Then \eqref{eqn:qpr2} implies that there are monomials $\gamma_1 z_2^{\ell_1}$ and $\gamma_2  z_2^{\ell_2}$ with $\gamma_1, \gamma_2 \in \mathbb{T}$ so that if we set 
 \[ B_1(z_2) = \gamma_1 z_2^{\ell_1}C_1(z_2)  \ \ \text{ and }  \ \ B_2(z_2) = \gamma_2 z_2^{\ell_2}C_2(z_2),\]
we obtain 
 \[ B_1(z_2)\theta(z)  =\gamma_1z_2^{\ell_1} \frac{\tilde{r}_1(z_2)\tilde{p}(z)}{r_1(z_2) p(z)} =  \gamma_2z_2^{\ell_2}\frac{\tilde{r}_2(z_2)\tilde{q}(z)}{r_2(z_2) q(z)} =B_2(z_2)\phi(z).\]
 
 For the other direction, assume $\theta(z) B_1(z_2) = \phi(z) B_2(z_2)$ for some finite Blaschke products $B_1(z_1)=\lambda_1 \frac{\tilde{r}_1(z_2)}{r_1(z_2)}$ and $B_2(z_2)= \lambda_2\frac{\tilde{r}_2(z_2)}{r_2(z_2)},$ where the $\tilde{}$ operation is with respect to $\deg(B_j)$ for each $j$ and $\lambda_1, \lambda_2 \in \mathbb{T}$. Then because $r_1p$ and $r_2q$ have no zeros on $\mathbb{D}^2$ and at mostly finitely many on $\mathbb{T}^2$, they are atoral. Since they are atoral denominator polynomials for the same RIF, there must be some nonzero constant $c$ with 
\[  r_1(z_2) p(z) = c r_2(z_2) q(z).\]  From this, one can rearrange terms to conclude that \eqref{eqn:Mpq} holds.
\end{proof} 

Lastly, we can use  Proposition \ref{prop:n1} to obtain the following characterization of the symbols $M^1_\theta$ associated to degree $(1,n)$ RIFs.  

\begin{proposition} \label{prop:psi} A one-variable scalar function $f(z_2)$ satisfies $\bar{f} = M^1_\theta$ for some RIF $\theta$ with $\deg_1( \theta) \ge 1$ if and only if $f$ satisfies the following three properties:
\begin{itemize}
\item[i.] $f$ is rational in $z_2$ and continuous on $\overline{\mathbb{D}}$,
\item[ii.] $|f|<1$ on $\mathbb{D}$,
\item[iii.] $f$ is not a finite Blaschke product.
\end{itemize}
 \end{proposition}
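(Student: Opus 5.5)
The plan is to use the explicit formula of Proposition~\ref{prop:n1} in both directions. Write a degree $(1,n)$ denominator as $p(z)=p_0(z_2)+z_1p_1(z_2)$, so that $p_0=p(0,\cdot)$, $p_1=S^*_{z_1}p$, and
\[
\overline{M^1_\theta}=-\frac{p_1}{p_0}.
\]
Since $M^1_\theta$ is a scalar exactly when $m=1$, the statement concerns RIFs of degree $(1,n)$. So I would show two things: every $f=-p_1/p_0$ arising from an atoral $p$ with no zeros in $\mathbb{D}^2$ has properties i--iii, and every $f$ with i--iii can be written in this form.

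\textbf{Necessity.} First, $p_0$ has no zeros on $\overline{\mathbb{D}}$. Indeed, $p$ has no zeros in $\mathbb{D}^2$, and, being atoral, none on the face $\mathbb{D}\times\mathbb{T}$. Hence $f$ is rational and continuous on $\overline{\mathbb{D}}$, which is i.

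For ii, fix $z_2\in\mathbb{D}$ with $p_1(z_2)\neq 0$. The only zero of $p(\cdot,z_2)$ is $z_1=-p_0(z_2)/p_1(z_2)=1/f(z_2)$. This point lies outside $\mathbb{D}$, so $|f|\le 1$ on $\mathbb{D}$. Now suppose $|f(z_2)|=1$ at some interior point. The maximum modulus principle then forces $f\equiv c$ with $|c|=1$. This gives $p=p_0(z_2)(1-cz_1)$, which vanishes on $\{\bar c\}\times\mathbb{D}\subset\mathbb{T}\times\mathbb{D}$, contradicting atorality. So $|f|<1$ on $\mathbb{D}$.

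For iii, suppose $f$ were a finite Blaschke product. Then for every $\tau\in\mathbb{T}$ the point $(1/f(\tau),\tau)$ lies in $\mathbb{T}^2$ and is a zero of $p$. That is infinitely many zeros on $\mathbb{T}^2$, again contradicting atorality.

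\textbf{Sufficiency.} Write $f=-a/b$ in lowest terms, where $b$ has no zeros on $\overline{\mathbb{D}}$ by i. If $f\equiv 0$, take $\theta=z_1$. Then $p=1$, $\beta_2=\{1\}$ and $S^*_{z_1}1=0$, so $M^1_\theta=0$. Otherwise set
\[
p(z)=b(z_2)+z_1a(z_2),\qquad (m,n)=(1,\max(\deg a,\deg b)),\qquad \theta=\tilde p/p .
\]
Then $\tilde p$ is a polynomial, and $\deg_1\theta=1$ because $a\neq 0$.

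\emph{No zeros in $\mathbb{D}^2$.} On $\{z_2\in\mathbb{D}\}$ we have $b\neq0$, and a zero would force $z_1=1/f(z_2)$. By ii this point lies outside $\overline{\mathbb{D}}$, so there is no zero there.

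\emph{Finitely many zeros on $\mathbb{T}^2$.} A zero $(t,\tau)\in\mathbb{T}^2$ requires $|f(\tau)|=1$. Since $|f|^2-1$ restricted to $\mathbb{T}$ is a real rational function of $\tau$, it either vanishes at finitely many points or identically. If $|f|\equiv1$ on $\mathbb{T}$, then $f$, being rational, analytic and continuous on $\overline{\mathbb{D}}$, is a finite Blaschke product or a unimodular constant. The first is excluded by iii and the second by ii. So $p$ has finitely many zeros on $\mathbb{T}^2$ and is atoral, hence $p$ and $\tilde p$ have no common factor. Therefore $\theta$ is an RIF of degree $(1,n)$ in the normalized form used throughout the paper, and Proposition~\ref{prop:n1} gives $\overline{M^1_\theta}=-S^*_{z_1}p/p(0,\cdot)=-a/b=f$.

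\textbf{Main obstacle.} The delicate step is the interplay between condition ii and atorality, namely the strictness $|f|<1$ together with excluding $|f|\equiv1$ on $\mathbb{T}$. This is where the maximum modulus argument and the finiteness of $\{\tau\in\mathbb{T}:|f(\tau)|=1\}$ have to be stated carefully. The remaining steps are routine once the formula from Proposition~\ref{prop:n1} is in hand.
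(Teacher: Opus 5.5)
Your proof is correct and follows the paper's approach. Both directions rest on the formula $\overline{M^1_\theta}=-S^*_{z_1}p/p(0,\cdot)$ from Proposition~\ref{prop:n1}, and the converse builds $p(z)=b(z_2)+z_1a(z_2)$ (the paper's $f_2(z_2)-z_1f_1(z_2)$) and sets $\theta=\tilde p/p$.

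There are two minor differences. The paper gets $|f|<1$ directly from $p$ having no zeros on $\mathbb{T}\times\mathbb{D}$, where you use the maximum modulus principle. The paper proves atorality by using the irreducibility of $p$ to force $\tilde p=\lambda p$, which would make $f$ a Blaschke product; you instead count the zeros on $\mathbb{T}^2$ directly, which avoids needing irreducibility.
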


\begin{proof} 
For the forward direction, assume that $\overline{M^1_\theta} =f$ for some RIF $\theta = \lambda \frac{\tilde{p}}{p}$. By Theorem \ref{thm:Mconstructive}, $\theta$ necessarily has degree $(1,n)$ for some $n$ and $f$ is rational in $z_2$ and continuous on $\overline{\mathbb{D}}$. Writing $f =\frac{f_1}{f_2}$ with no common factors, \eqref{eqn:psi} implies that 
\[ \frac{f_1(z_2)}{f_2(z_2)}=  \frac{(-S^*_{z_1}p)(z_2)}{p(0, z_2)},\]
for all but at most finitely many $z_2$. 
Since $f_1$ and $f_2$ have no common factors, this implies that there is a polynomial $r$ so that
\[ -(S^*_{z_1}p)(z_2) = r(z_2) f_1(z_2) \ \ \text{ and } p(0, z_2) =r(z_2)  f_2(z_2).\]
Rearranging these equations gives $p(z) = r(z_2)\left( f_2(z_2)-z_1 f_1(z_2)\right).$
Since $p$ has no zeros on $(\mathbb{T} \times \mathbb{D}) \cup \mathbb{D}^2$, we must have $|f_1|<|f_2|$ on $\mathbb{D}$ and similarly, since $p$ has at most finitely many zeros on $\mathbb{T}^2$, $f$ cannot be a finite Blaschke product.

For the backwards direction, assume $f$ satisfies properties $(i)$-$(iii)$ and write $f = \frac{f_1}{f_2}$ with no common factors. If $f_1 \equiv 0$, set $\theta = z_1$, so $p\equiv1$. Otherwise, define
\[ p(z) = f_2(z_2) - z_1 f_1(z_2).\]
Then $p$ is an irreducible polynomial of degree $(1,n)$ for some $n$. By assumption (ii), $p$ has no zeros on $\mathbb{D}^2$.  We claim that $p$ is atoral. To see this, write
\[ \tilde{p}(z) = z_1 \tilde{f_2}(z_2) - \tilde{f_1}(z_2),\] 
where $\tilde{f}_2$ and $\tilde{f}_1$ are both computed using $\deg_2(p)$. By way of contradiction, assume that $p$ not atoral. Then 
$p$ and $\tilde{p}$ have a common factor. Since $p$ is irreducible, this implies
\[ z_1 \tilde{f_2}(z_2) - \tilde{f_1}(z_2) = \tilde{p}(z) = \lambda p(z) = \lambda \left( f_2(z_2) - z_1 f_1(z_2)\right), \]
for some $\lambda \in \mathbb{C}$. Since $|\tilde{p}(\tau)| = | p(\tau)|$ for all $ \tau \in \mathbb{T}^2$, we must have $\lambda \in \mathbb{T}$. Then $-\lambda f_1 = \tilde{f}_2$ and so $f = -\bar{\lambda} \frac{\tilde{f_2}}{f_2}$
is an RIF with $\deg_1(f) =0$ and hence, is a finite Blaschke product in $z_2$. Since that is not allowed, $p, \tilde{p}$ have no common factors and thus, $p$ is atoral.  Setting $\theta = \frac{\tilde{p}}{p}$ gives  $\overline{M^1_\theta} =f$.
\end{proof} 


\section{$M^1_{\Theta}$ for Products of Rational Inner Functions}  \label{sec:product}

To facilitate the study of compressions of the shift for more general RIFs, we now develop tools to allow us to compute the matrices $M^1_{\theta}$ for a variety of types of RIFs that will be key examples in our investigations.  Specifically,  we consider finite products of RIFs.  This work extends results of Bickel and Gorkin \cite{bg17} who studied the case in which  each factor is degree (1,1) and has a singularity on $\mathbb{T}^2$.

In the following, let $r \in \mathbb{N}$, $\Theta= \prod_{t=1}^r \theta_t$, where each $\theta_t=\lambda_t\frac{\tilde{p_t}}{p_t}$ is a degree $(m_t, n_t)$ RIF with corresponding orthogonal decomposition $\mathcal{K}_{\theta_t}=\mathcal{S}^{\theta_t}_1 \oplus \mathcal{S}^{\theta_t}_2$ such that $\mathcal{S}^{\theta_t}_1, \mathcal{S}_2^{\theta_t}$ are respectively $S_{z_1}$- and $S_{z_2}$-invariant.  For $1 \leq t \leq r$, assume that $m_t \geq 1$ and fix an orthonormal basis $\beta^{\theta_t}_2=\{f^t_1, \ldots, f^t_{m_t}\}$ for $\mathcal{S}^{\theta_t}_2 \ominus z_2\mathcal{S}^{\theta_t}_2.$ 

 To obtain a decomposition of $\mathcal{K}_{\Theta}$ as $\mathcal{S}_1 \oplus \mathcal{S}_2$, we notice that although Proposition 4.3 in \cite{bg17} was stated for degree $(1,1)$ functions $\theta_t$ each having a singularity on $\mathbb{T}^2$, most of its proof does not utilize those restrictions and thus establishes the following more general result.

\begin{proposition} \label{prop:productdecomp} 
In the above setting, $\displaystyle{\mathcal{K}_{\Theta} =\bigoplus_{t=1}^{r}\left(\prod_{k=1}^{t-1}\theta_k\right)\!\mathcal{K}_{\theta_t}}.$ For $i=1,2$, define \begin{align}\label{prodS2} \mathcal{S}_i:=\bigoplus_{t=1}^r \left(\left(\prod_{k=1}^{t-1} \theta_k\right)\!\mathcal{S}^{\theta_t}_i\right).\end{align}
Then $\mathcal{K}_{\Theta}=\mathcal{S}_1 \oplus \mathcal{S}_2$, $\mathcal{S}_1$ and $\mathcal{S}_2$ are respectively $S_{z_1}$- and $S_{z_2}$-invariant, and 
\begin{align}\label{prodbasis}\beta_2:=\left\{f_1^1, \ldots, f^1_{m_1}, \theta_1f^{2}_1, \ldots, \theta_1f^2_{m_2}, \ldots, \left(\prod_{k=1}^{r-1}\theta_k\right)\!f^r_1, \ldots,  \left(\prod_{k=1}^{r-1}\theta_k\right)\!f^r_{m_r}\right\}\end{align} is an orthonormal basis for
\begin{align*}\mathcal{S}_2 \ominus z_2\mathcal{S}_2 = \bigoplus_{t=1}^r \left(\prod_{k=1}^{t-1}\theta_k\right)\!\left(\mathcal{S}_2^{\theta_t} \ominus z_2\mathcal{S}^{\theta_t}_2\right).  
\end{align*}
\end{proposition}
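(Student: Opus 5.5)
The plan is to first prove the model-space decomposition by induction on $r$, and then push the Agler decompositions of the factors through it. Write $\Phi_t := \prod_{k=1}^{t-1}\theta_k$, with $\Phi_1 \equiv 1$. The key identity is
\[
\mathcal{K}_{\phi\psi} = \mathcal{K}_\phi \oplus \phi\,\mathcal{K}_\psi
\]
for inner $\phi,\psi$. Its proof runs as follows.
\begin{itemize}
\item Multiplication by an inner function $\phi$ is an isometry on $H^2(\mathbb{D}^2)$, because $|\phi|=1$ a.e. on $\mathbb{T}^2$.
\item Hence $\phi H^2 = \phi\mathcal{K}_\psi \oplus \phi\psi H^2$, and $\phi\mathcal{K}_\psi$ is closed.
\item Consequently $H^2 = \mathcal{K}_\phi \oplus \phi\mathcal{K}_\psi \oplus \phi\psi H^2$, and taking the orthogonal complement of $\phi\psi H^2$ gives the identity.
\end{itemize}
Iterating this with $\phi = \theta_1$ and $\psi = \theta_2\cdots\theta_r$ yields $\mathcal{K}_\Theta = \bigoplus_t \Phi_t \mathcal{K}_{\theta_t}$.

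Next, we substitute $\mathcal{K}_{\theta_t} = \mathcal{S}^{\theta_t}_1 \oplus \mathcal{S}^{\theta_t}_2$ into each summand. Multiplication by $\Phi_t$ is an isometry, so each $\Phi_t\mathcal{S}^{\theta_t}_i$ is closed. The summands $\Phi_t\mathcal{S}^{\theta_t}_1$ and $\Phi_t\mathcal{S}^{\theta_t}_2$ are orthogonal to each other, and both are orthogonal to every piece with a different index $t$. Regrouping the pieces by $i$ therefore gives $\mathcal{K}_\Theta = \mathcal{S}_1 \oplus \mathcal{S}_2$, with each $\mathcal{S}_i$ a closed orthogonal direct sum.

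For invariance, multiplication by $z_i$ commutes with multiplication by $\Phi_t$. Thus $z_i \Phi_t \mathcal{S}^{\theta_t}_i = \Phi_t (z_i\mathcal{S}^{\theta_t}_i) \subseteq \Phi_t\mathcal{S}^{\theta_t}_i$. Summing over $t$ shows that $\mathcal{S}_i$ is $S_{z_i}$-invariant.

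For the wandering subspace, $z_2\mathcal{S}_2 = \bigoplus_t \Phi_t z_2\mathcal{S}^{\theta_t}_2$, since $z_2$ acts summand by summand. Within each summand, $\Phi_t\mathcal{S}^{\theta_t}_2 \ominus \Phi_t z_2\mathcal{S}^{\theta_t}_2 = \Phi_t(\mathcal{S}^{\theta_t}_2 \ominus z_2\mathcal{S}^{\theta_t}_2)$, because multiplication by $\Phi_t$ is isometric. Since the summands are mutually orthogonal, taking the orthogonal complement summand by summand gives the stated formula for $\mathcal{S}_2 \ominus z_2\mathcal{S}_2$. Multiplication by $\Phi_t$ carries the orthonormal basis $\beta^{\theta_t}_2$ to an orthonormal set. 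Taking the union over the mutually orthogonal summands produces the orthonormal basis $\beta_2$ in \eqref{prodbasis}.

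The only step requiring care is the orthogonal-complement computation, namely showing that $(A\oplus B)\ominus(A'\oplus B') = (A\ominus A')\oplus(B\ominus B')$ when $A'\subseteq A$, $B'\subseteq B$, and $A\perp B$. This is elementary. I therefore expect no real obstacle beyond verifying that each $\mathcal{K}_{\theta_t}$ decomposition transfers isometrically. This is exactly where the restrictions in \cite{bg17}, to degree $(1,1)$ factors with singularities, play no role.
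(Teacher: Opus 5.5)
Your proof is correct and follows essentially the same route the paper relies on. The paper gives no new proof; it observes that the proof of Proposition 4.3 in \cite{bg17} carries over without the degree $(1,1)$ and singularity hypotheses. That argument is the decomposition $\mathcal{K}_{\phi\psi}=\mathcal{K}_\phi\oplus\phi\,\mathcal{K}_\psi$, followed by transporting each factor's splitting $\mathcal{K}_{\theta_t}=\mathcal{S}^{\theta_t}_1\oplus\mathcal{S}^{\theta_t}_2$ under the isometry of multiplication by $\prod_{k<t}\theta_k$, and your write-up carries out these general steps (closedness, invariance, and computing $\mathcal{S}_2\ominus z_2\mathcal{S}_2$ summand by summand) in full.
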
 

Note that if, for all $1 \leq t \leq r$, $n_t \geq 1$ and $\beta_1^{\theta_t}=\{f_1^{t, 1}, \ldots, f_{n_t}^{t, 1}\}$ is an orthonormal basis for $\mathcal{S}_1^{\theta_t} \ominus z_1\mathcal{S}_1^{\theta_t}$, then it also holds that  \begin{align}\label{prodS1} \beta_1:= \left\{f_1^{1,1}, \ldots, f^{1,1}_{n_1}, \theta_1f^{2,1}_1, \ldots, \theta_1f^{2,1}_{n_2}, \ldots, \left(\prod_{k=1}^{r-1}\theta_k\right)\!f^{r,1}_1, \ldots,  \left(\prod_{k=1}^{r-1}\theta_k\right)\!f^{r,1}_{n_r}\right\}\end{align}  is an orthonormal basis for $\mathcal{S}_{1} \ominus z_1 \mathcal{S}_1$ where $\mathcal{S}_1$ is defined by \eqref{prodS2}.  Here, we are not assuming that each $m_t \geq 1$.

We can now find an explicit formula for $M^1_{\Theta}$ as a lower-triangular block matrix that incorporates the matrices $M^1_{\theta_t}$ of the individual factors as its diagonal blocks.

\begin{theorem}\label{thm:productM} Let $\Theta= \prod_{t=1}^r \theta_t$ where each $\theta_t=\lambda_t\frac{\tilde{p_t}}{p_t}$ is a degree $(m_t, n_t)$ RIF with $m_t \geq 1$, and suppose $\mathcal{S}_2^{\theta_1}, \ldots, \mathcal{S}_2^{\theta_r}$, $\beta^{\theta_1}_2, \ldots, \beta^{\theta_r}_2$, $\mathcal{S}_2$, and $\beta_2$ are as described above. 
For $1 \leq t \leq r$, let $M_{\theta_t, \beta^{\theta_t}_2}^1$ be the matrix-valued function from Theorem \ref{thm:Mconstructive} for $\theta_{t}$  with respect to $\beta^{\theta_t}_2$, and let $g_1^t, \ldots, g_{m_t}^t \in H^2_2(\mathbb{D})$ such that $S_{z_1}^*\theta_t= \sum_{j=1}^{m_t} g_j^t f_{j}^t.$

Then the matrix-valued function $M^1_{\Theta}$ from Theorem \ref{thm:Mconstructive} with respect to the basis $\beta_2$ is a block matrix $[H_{st}]_{s, t = 1}^r$ where the $m_s \times m_t$ block $H_{st}$ is given by \begin{align*} H_{st} = \left\{\begin{array}{ll}  0_{m_s \times m_t}, & \text{if } s< t \\  M_{\theta_s, \beta_2^{\theta_s}}^1, &  \text{if } s=t\\\left(\prod_{\ell=t+1}^{s-1}\overline{\theta_{\ell}(0, z_2)}\right)\left[\, \, \overline{f_i^s(0, z_2)g_j^t(z_2)}\, \, \right], & \text{if } s>t\end{array} \right.\end{align*}

\end{theorem}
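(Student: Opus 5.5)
By Theorem \ref{thm:Mconstructive}, it suffices to expand $S_{z_1}^*$ applied to each basis vector of $\beta_2$ in the form $\sum (\text{basis vector})\cdot h(z_2)$ and read off the coefficients. The $(i,j)$ entry of the $(s,t)$ block is the conjugate of the coefficient of the $j$th vector of block $t$ in the expansion of $S_{z_1}^*$ of the $i$th vector of block $s$. So the plan is to compute
\[ S_{z_1}^*\Big(\Big(\textstyle\prod_{k=1}^{s-1}\theta_k\Big) f_i^s\Big) \]
for each $s$ and $i$.

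The key tool is a product rule for the backward shift in $z_1$. From \eqref{eqn:shiftformula}, for $g$ bounded analytic and $f\in H^2(\mathbb{D}^2)$,
\[ S_{z_1}^*(gf) = g\, S_{z_1}^* f + f(0,z_2)\, S_{z_1}^* g. \]
Iterating with $g=\theta_1\cdots\theta_{s-1}$ gives
\[ S_{z_1}^*\Big(\prod_{k<s}\theta_k\, f_i^s\Big) = \prod_{k<s}\theta_k\, S_{z_1}^* f_i^s + f_i^s(0,z_2)\sum_{t=1}^{s-1}\Big(\prod_{k<t}\theta_k\Big)\Big(\prod_{\ell=t+1}^{s-1}\theta_\ell(0,z_2)\Big) S_{z_1}^*\theta_t. \]
To obtain this, peel off one factor at a time, using $(\theta_{t+1}\cdots\theta_{s-1}f_i^s)(0,z_2)=\prod_{\ell=t+1}^{s-1}\theta_\ell(0,z_2)\, f_i^s(0,z_2)$. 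I would do this by induction on $s$.

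Next, substitute the two known expansions. The first is $S_{z_1}^* f_i^s = \sum_j h^s_{ij} f_j^s$, which defines $M^1_{\theta_s,\beta_2^{\theta_s}} = [\overline{h^s_{ij}}]$. The second is the hypothesis $S_{z_1}^*\theta_t = \sum_j g_j^t f_j^t$. The first term then lies in the span of block $s$ with coefficients $h^s_{ij}$, which gives the diagonal block. The $t$-th summand is $\sum_j \big(\prod_{\ell=t+1}^{s-1}\theta_\ell(0,z_2)\, f_i^s(0,z_2)\, g_j^t(z_2)\big)\big(\prod_{k<t}\theta_k f_j^t\big)$, which after conjugation gives exactly $H_{st}$ for $t<s$. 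No terms from blocks $t>s$ appear, so $H_{st}=0$ there.

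Two justification points need checking. First, the coefficients should be functions of $z_2$ alone lying in $H^2_2(\mathbb{D})$; this holds because $\theta_\ell(0,z_2)$ and $f_i^s(0,z_2)$ are bounded or rational in $z_2$ only. Second, the representation must be unique, so that it coincides with the $h_{ij}$ of Theorem \ref{thm:Mconstructive}; this follows from $\mathcal{U}_{\beta_2}$ being unitary. There is also the existence of the expansion $S_{z_1}^*\theta_t = \sum_j g_j^t f_j^t$, which the statement takes as given. To make the statement self-contained, I would note that $S_{z_1}^*\theta_t\in\mathcal{K}_{\theta_t}$, since $\langle S_{z_1}^*\theta_t,\theta_t h\rangle = \langle \theta_t, z_1\theta_t h\rangle = \langle 1, z_1 h\rangle = 0$. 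Moreover, $S_{z_1}^*\theta_t$ is orthogonal to $\mathcal{S}_1^{\theta_t}$ and to $z_2\mathcal{S}_2^{\theta_t}$ by a similar computation using the invariances, so it lies in $\mathcal{S}_2^{\theta_t}\ominus z_2\mathcal{S}_2^{\theta_t}$ up to $z_2$-coefficients.

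The main obstacle I anticipate is the bookkeeping in the iterated product rule, specifically verifying that the evaluation at $z_1=0$ of the partial products produces exactly $\prod_{\ell=t+1}^{s-1}\overline{\theta_\ell(0,z_2)}$ with the correct index range. The rest is direct substitution.
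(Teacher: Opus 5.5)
Your proposal is correct and follows the same route as the paper. Both arguments use the iterated product rule $S_{z_1}^*(GH)=G\,S_{z_1}^*H+H(0,z_2)\,S_{z_1}^*G$, substitute the expansions of $S_{z_1}^*f_i^s$ and $S_{z_1}^*\theta_t$, and read off the conjugated coefficients using uniqueness of the representation. One correction to your side remark, which your main argument does not use: $S_{z_1}^*\theta_t$ does lie in $\mathcal{S}_2^{\theta_t}$, since it is in $\mathcal{K}_{\theta_t}$ and orthogonal to $\mathcal{S}_1^{\theta_t}$, as the paper argues. It need not be orthogonal to $z_2\mathcal{S}_2^{\theta_t}$, however. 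For example, for $\theta=z_1z_2$ with $\beta_2=\{1\}$ one has $S_{z_1}^*\theta=z_2\in z_2\mathcal{S}_2$, which is why the $g_j^t$ are in general non-constant functions of $z_2$.
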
 

\begin{proof}

By  Theorem \ref{thm:Mconstructive}, $M^1_{\Theta}$ is the block matrix $M^1_{\Theta}=[H_{st}]$ where the $m_s \times m_t$ blocks $H_{st}:=\left[\overline{h_{ij}^{st}}\right]$ are built from the unique functions $h^{st}_{ij} \in H^2_2(\mathbb{D})$ that satisfy 
\begin{align*} S_{z_1}^*\left(\left(\prod_{k=1}^{s-1}\theta_k\right)\!f_i^s\right)=\sum_{t=1}^r \sum_{j=1}^{m_t}h_{ij}^{st}\left(\prod_{k=1}^{t-1}\theta_k\right)\!f_j^t.\end{align*}
It is well-known that if $G, H \in H^2(\mathbb{D}^2)$ with $GH \in H^2(\mathbb{D}^2)$, then $S_{z_1}^*(GH)=GS_{z_1}^*(H)+H(0,z_2)S_{z_1}^*(G)$.  Applying this inductively,   we obtain 
 \begin{align*} S_{z_1}^*\left(\left(\prod_{k=1}^{s-1}\theta_k\right)\!f_i^s\right)=\left(\prod_{k=1}^{s-1}\theta_k\right)\!S_{z_1}^*(f_i^s) +  \sum_{t=1}^{s-1}\left(\prod_{k=1}^{t-1}\theta_k\right)\!\left[\left(\left(\prod_{\ell=t+1}^{s-1}\theta_{\ell}\right)\!f_i^s\right)\!(0, z_2)\right]\!S_{z_1}^*(\theta_t).\end{align*}
 
Straightforward computations show that  each $S_{z_1}^*(\theta_t) \in \mathcal{K}_{\theta_t}$ and $S_{z_1}^*(\theta_t)  \perp \mathcal{S}_{1}^{\theta_t}$ since $\mathcal{S}_{1}^{\theta_t}$ is a $S_{z_1}$-invariant subspace of $\mathcal{K}_{\theta_t}$, so $S_{z_1}^*(\theta_t) \in \mathcal{S}_{2}^{\theta_t}$. Thus, for each $t$, there exist functions $g_1^t, \ldots, g_{m_t}^t \in H_2^2(\mathbb{D})$ such that $S_{z_1}^*(\theta_t)=\sum_{j=1}^{m_t} g_j^t f_{j}^t.$ By construction, $M^1_{\theta_s, \beta^{\theta_s}_2}
=\left[\,  \overline{\hat{h}^s_{ij}} \, \right]$ where the functions  $\hat{h}^s_{ij} \in H^2_2(\mathbb{D})$ satisfy $S_{z_1}^*f_i^s = \sum_{j=1}^{m_s} \hat{h}^s_{ij}f_j^s$.  

Thus,  \begin{align*} S_{z_1}^*\left(\left(\prod_{k=1}^{s-1}\theta_k\right)\!f_i^s\right)& =\sum_{j=1}^{m_s}\hat{h}^s_{ij}\left(\prod_{k=1}^{s-1}\theta_k\right)f_j^s\\ & \quad + \sum_{t=1}^{s-1} \sum_{j=1}^{m_t} \left(\left(\left(\prod_{\ell=t+1}^{s-1}\theta_{\ell}\right)\!f_i^s\right)\!(0, z_2)\right)\!g_j^t(z_2)\left(\prod_{k=1}^{t-1}\theta_k\right)\!f_j^t.\end{align*}  Hence, for $1 \leq s, t \leq r,$ $1 \leq i \leq m_s$, and $1 \leq j \leq m_t$, the function \begin{align*} h_{ij}^{st}(z_2)=\left\{\begin{array}{ll} 0, & \text{if } s <t \\ \hat{h}^{s}_{ij}(z_2), & \text{if } s=t \\ 
\left(\prod_{\ell= t+1}^{s-1} \theta_{\ell}(0, z_2)\right)f_i^s(0, z_2)g_j^t(z_2), & \text{if } s >t\end{array}\right.,\end{align*} which yields the desired block matrix formula. \end{proof} 

Note that if $n_t \geq 1$ for $1 \leq t \leq r$, then  by applying an analogous argument using $S_{z_2}^*$, one can also obtain that for $\Theta$ described as above and $\beta_1$ given by \eqref{prodS1}, $M^2_{\Theta, \beta_1}=[\hat{H}_{st}]_{s,t=1}^r$ with $n_s \times n_t$ matrix blocks  \begin{align*} \hat{H}_{st} = \left\{\begin{array}{ll}  0_{n_s \times n_t}, & \text{if } s< t \\  M_{\theta_s, \beta_1^{\theta_s}}^2, &  \text{if } s=t\\\left(\prod_{\ell=t+1}^{s-1}\overline{\theta_{\ell}(z_1,0)}\right)\left[\overline{f_i^{s,1}(z_1, 0) g_j^{t,1}(z_1)}\right], & \text{if } s>t\end{array} \right.\end{align*} where, for $1 \leq t \leq r$, $g_{1}^{t,1}, \ldots, g_{n_t}^{t,1} \in H_1^2(\mathbb{D})$ satisfy $S_{z_2}^*\theta_t=\sum_{j=1}^{n_t} g_j^{t,1}f_{j}^{t,1}$.

Applying Theorem \ref{thm:productM} requires knowledge of the bases and corresponding matrix-valued functions associated to the building block functions $\theta_t$.  We can fully describe these for degree $(1,0)$ and degree $(1,1)$ RIFs $\theta$.  While the $1 \times 1$ matrix-valued functions $M^1_{\theta}$ are fully described by Proposition \ref{prop:n1} in this case, further investigations are required to specify the possible bases of $\mathcal{S}_2 \ominus z_2 \mathcal{S}_2$. The following work extends results in \cite{bg17} by providing an alternative approach to Example 3.3 and expanding Lemma 4.1, which considered degree $(1,1)$ RIFs with a singularity in $\mathbb{T}^2$.  For the degree (1,0) case, note that any degree $(m,0)$ RIF must be a finite Blaschke product in the variable $z_1$, so we can describe the general $(1,0)$ case via the following lemma.

\begin{lemma} \label{lemma:1_0} Suppose $a \in \mathbb{D}$, $\lambda \in \mathbb{T}$ and $\theta(z)=\lambda\frac{z_1-a}{1-\overline{a}z_1}$. If $\mathcal{K}_{\theta} = \mathcal{S}_1 \oplus \mathcal{S}_2$ is an orthogonal decomposition of $\mathcal{K}_{\theta}$ into closed subspaces that are respectively $S_{z_1}$- and $S_{z_2}$-invariant and $\beta_2$ is an orthonormal basis of  $\mathcal{S}_2 \ominus z_2\mathcal{S}_2$ of form \eqref{eqn:S2basis}, then $\mathcal{S}_1=\{0\}$, $\mathcal{S}_2=\mathcal{K}_{\theta},$ 
\begin{align*} M^1_{\theta} (z_2) =a,  \quad \quad  \beta_2=\left\{\frac{\delta \sqrt{1-|a|^2}}{1-\overline{a}z_1}\right\} \,\,  \text{for some} \, \,  \delta \in \mathbb{T}, \quad \text{and} \quad (S_{z_1}^* \theta)(z)=\lambda\frac{1-|a|^2}{1-\overline{a}z_1}.\end{align*}   

\end{lemma}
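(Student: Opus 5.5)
The plan is to start by identifying $\theta$ in the form \eqref{eqn:RIF}. Write $p(z)=1-\overline{a}z_1$. This polynomial has degree $(1,0)$, no zeros on $\overline{\mathbb{D}}^2$, and is atoral. With reflection taken with respect to $(m,n)=(1,0)$ we get $\tilde{p}(z)=z_1-a$, so $\theta=\lambda\frac{\tilde p}{p}$ has $m=1$ and $n=0$.

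\textbf{Step 1: the decomposition is forced.} By \cite[Theorem 2.2]{bg20} (recalled in Section \ref{sec:2var}), $\dim(\mathcal{S}_1\ominus z_1\mathcal{S}_1)=n=0$. Hence $\mathcal{S}_1=z_1\mathcal{S}_1$, so $\mathcal{S}_1=z_1^k\mathcal{S}_1$ for every $k$. Every element of $\mathcal{S}_1\subseteq H^2(\mathbb{D}^2)$ is then divisible by arbitrarily high powers of $z_1$, which forces $\mathcal{S}_1=\{0\}$ and therefore $\mathcal{S}_2=\mathcal{K}_\theta$. Alternatively, one can argue directly that $\mathcal{K}_\theta=\{g(z_2)/(1-\overline{a}z_1): g\in H^2_2(\mathbb{D})\}$. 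For the zero-dimension argument, one can instead cite the uniqueness of the Agler decomposition when $n=0$; this is the analogue of \cite[Corollary 1.16]{k10} with the roles of the variables swapped.

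\textbf{Step 2: identify $\beta_2$.} We have $\dim(\mathcal{S}_2\ominus z_2\mathcal{S}_2)=m=1$, and by \eqref{eqn:S2basis} the basis is $\{q/p\}$ with $\deg q\le(0,0)$, so $q$ is a constant $c$. Normalization gives $\|c/(1-\overline{a}z_1)\|_{H^2}^2=|c|^2/(1-|a|^2)=1$. Hence $c=\delta\sqrt{1-|a|^2}$ with $\delta\in\mathbb{T}$.

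\textbf{Step 3: the symbol and $S_{z_1}^*\theta$.} Proposition \ref{prop:n1} applies with $n=0$. Since $S_{z_1}^*p=-\overline{a}$ and $p(0,z_2)=1$, it gives $\overline{M^1_\theta}=\overline{a}$, so $M^1_\theta\equiv a$. The same value follows from the direct computation $S_{z_1}^*\frac{1}{1-\overline{a}z_1}=\frac{\overline{a}}{1-\overline{a}z_1}$. For the last claim, apply \eqref{eqn:shiftformula} to $\theta$:
\[S_{z_1}^*\theta=\lambda\,\frac{\frac{z_1-a}{1-\overline{a}z_1}+a}{z_1}=\lambda\frac{1-|a|^2}{1-\overline{a}z_1}.\]

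The only real obstacle is Step 1, namely justifying $\mathcal{S}_1=\{0\}$ cleanly. The dimension count above handles it, and Steps 2 and 3 are routine computation.
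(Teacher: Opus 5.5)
Your proof is correct. Steps 2 and 3 match the paper: the norm computation gives $|q|=\sqrt{1-|a|^2}$, and \eqref{eqn:shiftformula} is applied to $\theta$. You also spell out $M^1_\theta\equiv a$, which the paper's proof leaves implicit.

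The real difference is in how you force the decomposition. The paper works in the opposite order. Having found $\beta_2=\{c/p\}$, it rebuilds $K_2(z,w)=\frac{1-|a|^2}{(1-\overline{a}z_1)(1-a\overline{w_1})}$. It then notes that $K_2/(1-z_2\overline{w_2})$ is exactly the kernel $K_\theta$, by the one-variable identity $1-b(z_1)\overline{b(w_1)}=\frac{(1-|a|^2)(1-z_1\overline{w_1})}{(1-\overline{a}z_1)(1-a\overline{w_1})}$ for $b(z_1)=\frac{z_1-a}{1-\overline{a}z_1}$. So $\mathcal{S}_2=\mathcal{K}_\theta$ by \eqref{eqn:Skernels}, and hence $\mathcal{S}_1=\{0\}$.

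You instead eliminate $\mathcal{S}_1$ first, in one of two ways:
\begin{itemize}
\item via $\dim(\mathcal{S}_1\ominus z_1\mathcal{S}_1)=0$ together with $\bigcap_k z_1^kH^2(\mathbb{D}^2)=\{0\}$; or
\item via the description $\mathcal{K}_\theta=\{g(z_2)/(1-\overline{a}z_1)\}$, in which $z_1 g(z_2)/(1-\overline{a}z_1)\in\mathcal{K}_\theta$ forces $g=0$.
\end{itemize}

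The paper's route is a one-line kernel check. It reuses the reproducing-kernel machinery of Section \ref{sec:2var} and ties the decomposition to the basis just computed. Your direct-description route is more elementary, with no kernels and no cited dimension theorem. It also shows slightly more: $\mathcal{K}_\theta$ has no nonzero $z_1$-invariant subspace at all.

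The dimension-count version is also fine. However, it uses the dimension formula in the degenerate case $n=0$, and the paper handles the analogous case $m=0$ separately by citing \cite{k10}. So the direct description is the cleaner of your two justifications.
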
 

\begin{proof}Since ${\rm deg} \, \theta = (1,0)$, we know from Section \ref{sec:2var}  that  $\dim(\mathcal{S}_2 \ominus z_2\mathcal{S}_2)=1$ and $\beta_2=\{\frac{q}{p}\}$ where $p(z)=1-\overline{a}z_1$ and $\deg q \leq (0,0)$, i.e.\ $q$ is a constant.  Norm computations show that $|q|$ must equal $\sqrt{|1|^2-|a|^2}$.  If we recover the reproducing kernel $K_2$ for  $\mathcal{S}_2 \ominus z_2 \mathcal{S}_2$ from $\beta_2$, $\frac{K_2}{1-z_2\overline{w_2}}=\mathcal{K}_{\theta}$, so $\mathcal{S}_2=\mathcal{K}_{\theta}$ and $\mathcal{S}_1=\{0\}$. The formula for $S_{z_1}^*\theta$ follows immediately from \eqref{eqn:shiftformula}. \end{proof} 

\begin{lemma} \label{lemma:1_1}Suppose $\theta=\lambda \frac{\tilde{p}}{p}$ is a degree $(1,1)$ RIF  with $p(z)=b+cz_1+dz_2+ez_1z_2$.  Let \begin{align*} A_1&:=(|b|^2-|e|^2)+(|c|^2-|d|^2), \quad A_2:=(|b|^2-|e|^2)-(|c|^2-|d|^2), \\ \gamma_1&:=\overline{b}c-\overline{d}e,  \quad \gamma_2:=\overline{b}d-\overline{c}e, \quad \zeta_i:=\begin{cases} \frac{\gamma_i}{|\gamma_i|}, & \gamma_i \neq 0 \\ 1, & \gamma_i=0 \end{cases} \, \, \text{for} \, \, i= 1, 2, \quad \text{and} \\ B&:=\sqrt{A_1^2-4|\gamma_1|^2}=\sqrt{A_2^2-4 |\gamma_2|^2}. \end{align*} Define the two-variable polynomials $r^+$, $r^-$, $q^+$, $q^-$ by \begin{align}\label{eqn:qr1_1} 
r^{\pm}(z) = \sqrt{\frac{A_1 \pm B}{2}} + \zeta_1\sqrt{\frac{A_1 \mp B}{2}}z_1 \quad \text{and} \quad q^{\pm}(z)=\sqrt{\frac{A_2 \pm B}{2}} + \zeta_2\sqrt{\frac{A_2 \mp B}{2}}z_2. 
\end{align} 
Then the only orthogonal decompositions $\mathcal{K}_{\theta} = \mathcal{S}_1 \oplus \mathcal{S}_2$ of $\mathcal{K}_{\theta}$ into closed subspaces that are respectively $S_{z_1}$- and $S_{z_2}$-invariant  are   $\mathcal{K}_{\theta}=\mathcal{H}\left(\frac{r^-(z)\overline{r^-(w)}}{p(z)\overline{p(w)}(1-z_1\overline{w_1})}\right) \oplus \mathcal{H}\left(\frac{q^+(z)\overline{q^+(w)}}{p(z)\overline{p(w)}(1-z_2\overline{w_2})}\right)$   and $\mathcal{K}_{\theta}=\mathcal{H}\left(\frac{r^+(z)\overline{r^+(w)}}{p(z)\overline{p(w)}(1-z_1\overline{w_1})}\right) \oplus  \mathcal{H}\left(\frac{q^-(z)\overline{q^-(w)}}{p(z)\overline{p(w)}(1-z_2\overline{w_2})}\right),$
 and these two decompositions are the same if and only if $B=0,$ which is equivalent to $\theta$ having a singularity on $\mathbb{T}^2$.  
The corresponding pairs of orthonormal bases for $\mathcal{S}_1 \ominus z_1\mathcal{S}_1$ and   $\mathcal{S}_2 \ominus z_2\mathcal{S}_2$ of forms \eqref{eqn:S1basis} and \eqref{eqn:S2basis} are respectively 
 \begin{align*} \beta_1^-=\left\{\delta_1\frac{r^-}{p}\right\}, \,  \beta_2^+ = \left\{\delta_2 \frac{q^+}{p}\right\} \qquad \text{and} \qquad  \beta_1^+=\left\{\delta_1 \frac{r^+}{p}\right\}, \, \beta_2^-=\left\{\delta_2\frac{q^-}{p}\right\}\end{align*}  for some $\delta_1, \delta_2 \in \mathbb{T}$, and in both cases \begin{align*}M_{\theta}^1(z_2)=\frac{-\overline{ez_2} -\overline{c}}{\overline{dz_2}+\overline{b}}  \qquad \text{and} \qquad  M_{\theta}^2(z_1)=\frac{-\overline{ez_1}-\overline{d}}{\overline{cz_1}+\overline{b}}. \end{align*} Moreover, \begin{align*}(S_{z_1}^*\theta)(z)=\lambda \overline{\zeta_2}\frac{q^+(z)q^-(z)}{p(z)p(0,z_2)} \qquad \text{and} \qquad (S_{z_2}^*\theta)(z)=\lambda \overline{\zeta_1}\frac{r^+(z)r^-(z)}{p(z)p(z_1, 0)} .\end{align*}
\end{lemma}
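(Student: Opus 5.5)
The plan is to construct the Agler kernels directly and then use the uniqueness structure of Agler decompositions. Since $\deg\theta=(1,1)$, Section \ref{sec:2var} gives $\dim(\mathcal{S}_1\ominus z_1\mathcal{S}_1)=\dim(\mathcal{S}_2\ominus z_2\mathcal{S}_2)=1$. Hence any decomposition has $K_1=\frac{r(z)\overline{r(w)}}{p(z)\overline{p(w)}}$ with $\deg r\le(1,0)$ and $K_2=\frac{q(z)\overline{q(w)}}{p(z)\overline{p(w)}}$ with $\deg q\le(0,1)$. Multiplying \eqref{Agler} by $p(z)\overline{p(w)}$ gives the polynomial identity
\[p(z)\overline{p(w)}-\tilde p(z)\overline{\tilde p(w)}=(1-z_1\overline{w_1})\,q(z)\overline{q(w)}+(1-z_2\overline{w_2})\,r(z)\overline{r(w)}.\]
Write $r=\alpha_0+\alpha_1z_1$ and $q=\beta_0+\beta_1z_2$, with $\tilde p(z)=\overline{e}+\overline{d}z_1+\overline{c}z_2+\overline{b}z_1z_2$.

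Next I compare coefficients of the monomials $z^\mu\overline{w}^\nu$. From $(1,1)$, $(z_1,\overline{w_1})$, $(z_2,\overline{w_2})$ and $(z_1z_2,\overline{w_1w_2})$ I get $|\alpha_0|^2+|\beta_0|^2=|b|^2-|e|^2$, $|\alpha_1|^2-|\beta_0|^2=|c|^2-|d|^2$, $|\beta_1|^2-|\alpha_0|^2=|d|^2-|c|^2$, and $|\alpha_1|^2+|\beta_1|^2=|b|^2-|e|^2$. The cross terms give $\alpha_0\overline{\alpha_1}$-type relations, namely $\overline{\alpha_0}\alpha_1=\overline b c-\overline d e=\gamma_1$ from the coefficient of $z_1$, and similarly $\overline{\beta_0}\beta_1=\gamma_2$. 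It follows that $|\alpha_0|^2+|\alpha_1|^2=A_1$ and $|\alpha_0\alpha_1|=|\gamma_1|$. So $|\alpha_0|^2,|\alpha_1|^2$ are the roots $\frac{A_1\pm B}{2}$ of $x^2-A_1x+|\gamma_1|^2$, and the phase relation forces $r=\delta_1 r^{\pm}$. The same argument gives $q=\delta_2 q^{\pm}$. The linear equations then couple the signs: $|\alpha_0|^2+|\beta_0|^2=|b|^2-|e|^2=\frac{A_1+A_2}{2}$ forces the choice $(r^-,q^+)$ or $(r^+,q^-)$. The equality of the two expressions for $B$ is a routine identity, checked by expanding.

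Given the decompositions, I verify orthogonality of $\{z_1^kr/p,\ z_2^\ell q/p\}$ to conclude that both kernel pairs really give orthogonal decompositions. The cleanest route is this. Agler kernel pairs of $\theta$ are exactly those two, and the canonical decompositions $\mathcal{S}_1^{\max}\oplus\mathcal{S}_2^{\min}$ and $\mathcal{S}_1^{\min}\oplus\mathcal{S}_2^{\max}$ from Section \ref{sec:2var} produce Agler kernels, hence must be among them. Since every decomposition produces Agler kernels of this rank-one form, there are at most two decompositions, and both are realized by the canonical ones. When $B\ne0$ the two kernel pairs differ, so the canonical decompositions are distinct and exhaust the list. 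When $B=0$ the kernel pairs coincide, giving a unique decomposition. For the link between $B=0$ and a singularity, I use the known fact that a unique Agler decomposition occurs exactly when $\theta$ has a singularity on $\mathbb{T}^2$, as for degree $(1,1)$ in \cite{bg17}. Alternatively, I can argue directly: $B=0$ iff $A_1=2|\gamma_1|$, iff $|\alpha_0|=|\alpha_1|$, iff $r$ vanishes on $\mathbb{T}$. Evaluating the identity at $z=w$ on $\mathbb{T}^2$ relates this to common zeros of $p$ and $\tilde p$ there.

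The formula for $M^1_\theta$ is Proposition \ref{prop:n1} applied with $-(S_{z_1}^*p)=-(c+ez_2)$ and $p(0,z_2)=b+dz_2$; $M^2_\theta$ follows symmetrically. For $S_{z_1}^*\theta$, I use \eqref{eqn:shiftformula}, which gives $\lambda\frac{\tilde p(z)p(0,z_2)-\tilde p(0,z_2)p(z)}{z_1p(z)p(0,z_2)}$. The numerator over $z_1$ is a polynomial of degree $(0,2)$. Setting $w=(0,\overline{z_2}^{-1})$-type specializations in the polynomial identity, or directly comparing coefficients, identifies it with $\overline{\zeta_2}q^+q^-$, since $q^+q^-$ has constant term $|\gamma_2|$, leading coefficient $\zeta_2^2|\gamma_2|$, and middle coefficient $\zeta_2A_2$.

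The main obstacle I expect is the bookkeeping in the coefficient matching: keeping the phases $\zeta_i$ correct, and the $B=0$ ⇔ singularity equivalence. I would try the reproducing-kernel uniqueness citation first.
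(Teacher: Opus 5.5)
Your necessity half is correct, and tidier than the paper's. The paper finds $r$ and $q$ separately by restricting \eqref{Agler} to $\mathbb{T}\times\{0\}$ and $\{0\}\times\mathbb{T}$, then defers to ``tedious computations'' to decide which pairings work. Your coefficient comparison, in particular $|\alpha_0|^2+|\beta_0|^2=|b|^2-|e|^2$, excludes $(r^+,q^+)$ and $(r^-,q^-)$ at once when $B\neq0$.

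The gap is in the existence half. You first propose verifying orthogonality of $\{z_1^k r/p,\ z_2^\ell q/p\}$ but never do it, and the ``cleanest route'' you substitute does not work. ``Both are realized by the canonical ones'' is asserted without proof. ``When $B\neq0$ the two kernel pairs differ, so the canonical decompositions are distinct'' is a non sequitur: the candidate pairs differing says nothing about whether $\mathcal{S}_1^{\max}\neq\mathcal{S}_1^{\min}$. A priori the canonical decompositions could coincide and realize only one candidate; Section~\ref{sec:2var} notes that, for a non-unique Agler decomposition, $\mathcal{S}_1\cap\mathcal{S}_2=\{0\}$ must be checked separately. Two things are needed.
\begin{enumerate}
\item[(i)] Sufficiency: $(K_1^+,K_2^-)$ and $(K_1^-,K_2^+)$ must actually satisfy \eqref{Agler}. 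This follows from your system once you note two facts. The coefficients of $z_1z_2$ and $z_1\overline{w_2}$ (and their Hermitian counterparts) vanish on both sides. Those of $z_1z_2\overline{w_1}$ and $z_1z_2\overline{w_2}$ just repeat the $\gamma_2$ and $\gamma_1$ equations.
\item[(ii)] A bridge from two Agler kernel pairs to two orthogonal decompositions. The paper uses the proof of \cite[Theorem 2.8]{b12}: non-unique Agler kernels force $\mathcal{S}_1^{\max}\neq\mathcal{S}_1^{\min}$. A self-contained alternative is to check $\mathcal{S}_1\cap\mathcal{S}_2=\{0\}$ for each candidate. For $(r^-,q^+)$, a common element is $r^-g(z_1)/p=q^+h(z_2)/p$, so $r^-g=q^+h$ is a constant $C$. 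When $B>0$, $r^-$ vanishes at a point of $\mathbb{D}$, so $g=C/r^-$ lies in $H^2_1(\mathbb{D})$ only if $C=0$. For $(r^+,q^-)$, use the zero of $q^-$ in $\mathbb{D}$ instead. Aronszajn's theorem then gives $\mathcal{K}_\theta=\mathcal{S}_1\oplus\mathcal{S}_2$ in both cases.
\end{enumerate}

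A smaller issue concerns $B=0\iff$ singularity on $\mathbb{T}^2$. You lean on an external characterization; the paper proves the equivalence directly and invokes \cite[Lemma 4.1]{bg17} only for the singular case. Your direct sketch also evaluates at the wrong place: at $z=w\in\mathbb{T}^2$ both sides of the identity vanish trivially. Instead take $w=\tau\in\mathbb{T}^2$ and leave $z$ free.
\begin{enumerate}
\item[($\Rightarrow$)] If $p(\tau)=0$ then $\tilde p(\tau)=\tau_1\tau_2\overline{p(\tau)}=0$, so the right-hand side vanishes identically in $z$. Setting $z_2=\tau_2$, respectively $z_1=\tau_1$, gives $q(\tau_2)=0=r(\tau_1)$. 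Hence $|\alpha_0|=|\alpha_1|$, and since $B=\bigl||\alpha_0|^2-|\alpha_1|^2\bigr|$, $B=0$.
\item[($\Leftarrow$)] If $B=0$, then $r$ and $q$ vanish at $\tau=(-\overline{\zeta_1},-\overline{\zeta_2})\in\mathbb{T}^2$. So $p(z)\overline{p(\tau)}=\tilde p(z)\overline{\tilde p(\tau)}$ for all $z$. Since $|p(\tau)|=|\tilde p(\tau)|$, $p(\tau)\neq0$ would make $p$ a scalar multiple of $\tilde p$, contradicting atorality.
\end{enumerate}
This replaces the paper's computation $p(\tau_1,\tau_2)=bB/(A_1A_2)$. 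Your treatment of $M^j_\theta$ and $S^*_{z_j}\theta$ matches the paper.
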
 

\begin{proof} Suppose $\mathcal{K}_{\theta}=\mathcal{S}_1 \oplus \mathcal{S}_2$ is an orthogonal decomposition of $\mathcal{K}_{\theta}$ into closed subspaces that are respectively $S_{z_1}$- and $S_{z_2}$-invariant.  By the discussion in Section \ref{sec:2var}, $\mathcal{S}_1 \ominus z_1\mathcal{S}_1$ and $\mathcal{S}_2 \ominus z_2\mathcal{S}_2$ are both dimension one with corresponding orthonormal bases $\beta_1=\{\frac{r}{p}\}$ and $\beta_2 = \{\frac{q}{p}\}$ where ${\rm deg} \,  r \leq (1, 0)$ and ${\rm deg} \, q \leq (0,1)$.   Since $K_1(z,w)=r(z)\overline{r(w)}/p(z)\overline{p(w)}$ and $K_2(z,w)=q(z)\overline{q(w)}/p(z)\overline{p(w)}$ must satisfy \eqref{Agler}, we have that \begin{align*}|p(z)|^2 - |\tilde{p}(z)|^2=(1-|z_1|^2)|q(z)|^2+(1-|z_2|^2)|r(z)|^2\end{align*} for all $z \in \mathbb{D}^2$ and thus all $z \in \overline{\mathbb{D}}^2$ by continuity.  It follows that, for all $\eta \in \mathbb{T}$, \begin{align}\label{eqn:basisequality} |r(\eta, 0)|^2=|b+c\eta|^2-|d+e\eta|^2 \quad \text{and} \quad |q(0, \eta)|^2=|b+d\eta|^2-|c+e\eta|^2.\end{align}  From this we can deduce that $r$ must be a unimodular constant multiple of $r^+$ or $r^{-}$ and $q$ must be a unimodular constant multiple of $q^+$ or $q^-$. We show here the computations for $q$ and leave the similar computations for $r$ to the reader.

Suppose $q(z)=t+sz_2$.  Then expanding the second equation in \eqref{eqn:basisequality}, we obtain that $|t|^2+|s|^2 +  2{\rm Re}(\overline{t}s\eta)=A_2+2{\rm Re}(\gamma_2\eta)$ for all $\eta \in \mathbb{T}$.  Thus, \begin{align} \label{eqn:S21_1} |t|^2+|s|^2=A_2 \quad \text{and} \quad \overline{t}s=\gamma_2.\end{align}  If $\gamma_2=0$, this immediately implies that  $q(z)=\delta_2\sqrt{A_2}=\delta_2q^+(z)$ or $q(z)=\delta_2\sqrt{A_2}z_2=\delta_2q^-(z)$ for some $\delta_2 \in \mathbb{T}$.  Now suppose $\gamma_2 \neq 0$.  Then $s=\frac{\gamma_2}{\bar{t}}$, so by substituting into the first equation in \eqref{eqn:S21_1}, we obtain $|t|^4-A_2|t|^2+|\gamma_2|^2 =0$ and thus $|t|^2=\frac{A_2 \pm \sqrt{A^2_2-4|\gamma_2|^2}}{2}=\frac{A_2 \pm B}{2}$.  Hence \begin{align*} q(z) = \delta_2\sqrt{\frac{A_2 \pm B}{2}}+ \frac{\gamma_2}{\overline{\delta_2}\sqrt{\frac{A_2 \pm B}{2}}}z_2 = \delta_2 \left(\sqrt{\frac{A_2 \pm B}{2}} + \frac{\gamma_2}{|\gamma_2|}\sqrt{\frac{A_2 \mp B}{2}}z_2\right)=\delta_2q^{\pm}(z)\end{align*} for some $\delta_2 \in \mathbb{T}$ as desired. The second equality above follows from the observation that $A_2^2-B^2=4|\gamma_2|^2.$

Thus in either case, $\mathcal{S}_2 \ominus z_2\mathcal{S}_2$  must have an orthonormal basis of the form $\beta^+_2=\{\delta_2 \frac{q^+}{p}\}$ or $\beta_2^-=\{\delta_2\frac{q^-}{p}\}$, so $\mathcal{S}_2$ equals  $\mathcal{S}_2^+:=\mathcal{H}\left(\frac{K_2^+}{1-z_2\overline{w_2}}\right)$ or $\mathcal{S}_2^{-}:=\mathcal{H}\left(\frac{K_2^-}{1-z_2\overline{w_2}}\right)$ for the kernel functions \begin{align*} K^{+}_2(z,w)=\frac{q^+(z)\overline{q^+(w)}}{p(z)\overline{p(w)}} \quad \text{and} \quad K^{-}_2(z,w)=\frac{q^-(z)\overline{q^-(w)}}{p(z)\overline{p(w)}}.\end{align*}  Similarly $\mathcal{S}_1$ must be $\mathcal{S}_1^+:=\mathcal{H}\left(\frac{K_1^+}{1-z_1\overline{w_1}}\right)$ or $S_1^-:=\mathcal{H}\left(\frac{K_1^-}{1-z_1\overline{w_1}}\right)$where  \begin{align*} K^{+}_1(z,w)=\frac{r^+(z)\overline{r^+(w)}}{p(z)\overline{p(w)}} \quad \text{and} \quad K^{-}_1(z,w)=\frac{r^-(z)\overline{r^-(w)}}{p(z)\overline{p(w)}}.\end{align*}  Note that the quantity $B$ appears in the formulas for both $q^{\pm}$ and $r^{\pm}$ since $A_2^2-4|\gamma_2|^2=A_1^2-4|\gamma_1|^2$, a fact that can be verified by direct computation.

 By the structure of the kernels, it is guaranteed that  $\mathcal{S}_2^+$ and $\mathcal{S}_2^-$ are both $S_{z_2}$-invariant and  $\mathcal{S}_1^+$ and $\mathcal{S}_1^-$  are both $S_{z_1}$-invariant. Although at least one pair of choices for $\mathcal{S}_1$ and $\mathcal{S}_2$ from these possibilities must produce an orthogonal decomposition of $\mathcal{K}_{\theta}$, we do not yet know whether all pairs of choices do.

If $B=0$, $q^+=q^-$ and $r^+=r^-$, so  $\mathcal{S}_i^+=\mathcal{S}_i^-$ for $i=1, 2$, so there is actually only one choice for $\mathcal{S}_1$ and for $\mathcal{S}_2$, and these choices must must yield the desired decomposition.  We claim that $B=0$  occurs if and only if $p$ has a zero in $\mathbb{T}^2$.  To see this, let $\tau_1=\frac{-2\overline{\gamma_1}}{A_1}$ and $\tau_2=\frac{-2\overline{\gamma_2}}{A_2}$, which are well-defined since our earlier computations imply that $A_1$ and $A_2$ are nonzero.   Note that $(\tau_1, \tau_2) \in \mathbb{T}^2$ if and only if $B=0$.  The proof  \cite[Lemma 4.1]{bg17} showed that if $p$ has a zero in $\mathbb{T}^2$, then this zero must be $(\tau_1, \tau_2)$, and one can compute directly that $p(\tau_1, \tau_2)=bB/(A_1A_2)$, so p has a zero at $(\tau_1, \tau_2)$ whenever $(\tau_1, \tau_2) \in \mathbb{T}^2$. 

Now suppose $B \neq 0$.  Then $q^+ \neq q^-$ and $r^+ \neq r^-$, so we have two distinct possibilities for each $\mathcal{S}_i$. Straightforward but tedious computations show that the kernel pairs $(K_1^+, K_2^-)$  and $(K_1^-, K_2^+)$ satisfy \eqref{Agler} and are thus Agler kernels of $\theta$, but $(K_1^+, K_2^+)$  and $(K_1^-, K_2^-)$ do not satisfy \eqref{Agler} since the $B$ terms that appear in the right hand side of the equation  in these cases do not cancel.
By the proof of \cite[Theorem 2.8]{b12}, since $\theta$ does not have a unique Agler decomposition, $\mathcal{S}_1^{\max} \neq \mathcal{S}_1^{\min}$ and $\mathcal{S}_2^{\max} \neq \mathcal{S}_2^{\min}$, so $\mathcal{K}_{\theta}$ has at least two orthogonal decompositions into closed $S_{z_1}$- and $S_{z_2}$-invariant, respectively,  subspaces.  Since each such decomposition must produce a different pair of Agler kernels, we conclude that both $\mathcal{K}_{\theta} = \mathcal{S}_1^+ \oplus \mathcal{S}_2^-$ and $\mathcal{K}_{\theta} = \mathcal{S}_1^- \oplus \mathcal{S}_2^+$ are orthogonal decompositions of this type, and these are the only such decompositions of $\mathcal{K}_\theta$. 

The conclusions about the bases for $\mathcal{S}_1 \ominus z_1\mathcal{S}_1$ and $\mathcal{S}_2 \ominus z_2\mathcal{S}_2$ follow from the preceding discussion.  The formula for $M^1_{\theta}(z_2)$ is obtained from Proposition \ref{prop:n1}  by using \eqref{eqn:shiftformula} to compute $S_{z_1}^*p$.   Applying \eqref{eqn:shiftformula} to $\theta$, we obtain that  $$(S_{z_1}^*\theta)(z)=\lambda\left(\frac{\overline{\gamma_2}+A_2z_2+\gamma_2z_2^2}{p(z)p(0, z_2)}\right),$$ and a routine computation shows that $\overline{\zeta_2}q^+(z)q^-(z)=\overline{\gamma_2}+A_2z_2+\gamma_2z_2^2$.The formulas for $M^2_{\theta}(z_1)$ and $S^*_{z_2}\theta$ are obtained by using analogous versions of Proposition \ref{prop:n1} and  \eqref{eqn:shiftformula} for degree $(m,1)$ RIF and  $S_{z_2}^*$ respectively.    \end{proof} 

We now apply the four preceding results to explore several classes of RIFs built as products of degree $(1,0)$ and/or $(1,1)$ RIFs that will be used as examples in future sections. 
\begin{example} \label{example:1_0prod} Suppose $\Theta$ is a degree $(m,0)$ RIF with $m \geq 1$.  Then $\Theta$  can be written in the form $\Theta = \prod_{t=1}^m \theta_t$ where \begin{align*}\theta_t(z)=\frac{z_1-a_t}{1-\overline{a_t}z_1} \, \,  \text{for} \, \, 1 \leq t \leq m-1 \quad \text{and} \quad  \theta_m(z)=\lambda\frac{z_1-a_m}{1-\overline{a_m}z_1}, \end{align*} for some $\lambda \in \mathbb{T}$ and $a_1, \ldots, a_m \in \mathbb{D}$. 
 We  incorporate $\lambda$ into the final factor to simplify computations.  By Lemma \ref{lemma:1_0},  each $\mathcal{K}_{\theta_t}$ decomposes as $\mathcal{K}_{\theta_t}=\mathcal{S}_{1}^{\theta_t} \oplus \mathcal{S}_2^{\theta_t}$  where the orthonormal basis $\beta_2^{\theta_t}=\{f_1^t\}$ for $\mathcal{S}_2^{\theta_t} \ominus z_2 \mathcal{S}_{2}^{\theta_t}$ is defined by setting $f_1^t(z)=\frac{\sqrt{1-|a_t|^2}}{1-\overline{a_t}z_1}$.  Moreover, for $1 \leq t \leq m $, $M^1_{\theta_t, \beta^{\theta_t}_2}$ is the constant function $M_{\theta_t, \beta^{\theta_t}_2}^1(z)=a_t$, and, for $1 \leq t \leq m-1$, $S_{z_1}^*\theta_t=g_1^tf_1^t$ for the constant function $g_1^t(z)=\sqrt{1-|a_t|^2}$. 
 
 For $i=1, 2$, let $\mathcal{S}_i=\bigoplus_{t=1}^m \left(\left(\prod_{k=1}^{t-1} \theta_k\right)\!\mathcal{S}^{\theta_t}_i\right)$.    By Proposition \ref{prop:productdecomp}, $\mathcal{K}_{\Theta}=\mathcal{S}_1 \oplus \mathcal{S}_2$ is an orthogonal decomposition of $\mathcal{K}_{\Theta}$ into closed subspaces that are respectively $S_{z_1}$- and $S_{z_2}$-invariant and $\beta_2=\left\{f_1^1, \theta_1f_1^2, \ldots \left(\prod_{k=1}^{m-1}\theta_k\right)f_1^m\right\}$  is an orthonormal basis for $\mathcal{S}_2 \ominus z_2\mathcal{S}_2$. Note that if the functions in $\beta_2$ are viewed as functions of one variable instead of two, then $\beta_2$ is the standard ordering of the Takenaka-Malmquist-Walsh basis corresponding to $\Theta$ as a single-variable function. Applying the formula from Theorem \ref{thm:productM}, we obtain that  $M^1_{\Theta, \beta_2}$ is the $m \times m$ matrix 
$M^1_{\Theta, \beta_2}=[H_{st}]$ with scalar-valued entries \begin{align*}H_{st} = \left\{\begin{array}{ll}  0 & \text{if } s< t \\  a_s, &  \text{if } s=t\\\left(\prod_{\ell=t+1}^{s-1}-\overline a_{\ell}\right)\sqrt{1-|a_s|^2}\sqrt{1-|a_t|^2}, & \text{if } s>t\end{array} \right. . \end{align*}  Note that $M^1_{\Theta, \beta_2}$ is the transpose of the matrix $M_{\Theta}$ given by \eqref{eqn:SB} when $\Theta$ is viewed as a function of a single variable,  and thus it is the transpose of the $M^1_{\Theta}$ matrix obtained for $\Theta$ in \cite[Example 3.3]{bg17}.  The appearance of the transpose here is the result of utilizing a  different version of a Takenaka-Malmquist-Walsh basis than the one utilized in these other computations.
 \end{example}

\begin{example} \label{example:1_1prod} Suppose $m \in \mathbb{N}$ and $\Theta=\prod_{t=1}^m \theta_t$ where each $\theta_t=\lambda_t\frac{\tilde{p}_t}{p_t}$ is a degree (1,1) RIF and, for notational simplicity, $\lambda_t=1$ for all $t \neq m$. For each $t$ and $i=1,2$, let $\gamma_{i, t}$, $\zeta_{i,t}$, $A_{i,t}$, $B_t$, $q_{t}^{\pm}$, and $r_t^{\pm}$ be the values of $\gamma_i$, $\zeta_i$, $A_i$, $B$, $q^{\pm}$, and $r^{\pm}$, respectively,  from Lemma \ref{lemma:1_1} for the function $\theta_t$.  For each $t$, let $f_1^{t,2}=\frac{q^+_t}{p_t}$ and $f_1^{t,1}=\frac{r^-_t}{p_t}$.  Then by Lemma \ref{lemma:1_1}, we can choose to decompose $\mathcal{K}_{\theta_t}$ as  $\mathcal{K}_{\theta_t}=\mathcal{S}_{1}^{\theta_t} \oplus \mathcal{S}_2^{\theta_t}$  using  $\beta_i^{\theta_t}=\{f_1^{t,i}\}$ as the orthonormal basis for $\mathcal{S}_i^{\theta_t} \ominus z_i \mathcal{S}_{i}^{\theta_t}$ for $i=1,2$. Moreover, for all $1 \leq t \leq m-1$, $S_{z_1}^*{\theta_t}=g_1^{t}f_1^{t,2}$ for  $g_1^{t}(z_2)=\overline{\zeta_{2,t}}\frac{q^-_t(z_2)}{p_t(0,z_2)}$ and $S_{z_2}^*{\theta_t}=g_1^{t,1}\!f_{1}^{t,1}$ for   $g_1^{t,1}(z_1)=\overline{\zeta_{1,t}}\frac{r^+_t(z_1)}{p_t(z_1,0)}$.

As in the previous example, by Proposition \ref{prop:productdecomp} and the following discussion, we can now decompose $\mathcal{K}_{\Theta}$ into respectively $S_{z_1}-$ and $S_{z_2}-$invariant subspaces as $\mathcal{K}_{\Theta}=\mathcal{S}_1 \oplus \mathcal{S}_2$ where, for $i=1, 2$,  $\beta_i=\left\{f_1^{1, i}, \theta_1f_1^{2,i}, \ldots \left(\prod_{k=1}^{m-1}\theta_k\right)f_1^{m,i}\right\}$ is an orthonormal basis for $\mathcal{S}_i \ominus z_i\mathcal{S}_i$.  By Theorem \ref{thm:productM} and the following discussion, $M_{\Theta, \beta_2}^1=[H_{st}]$ and $M_{\Theta, \beta_1}^2=[\hat{H}_{st}]$  are both $m \times m$ matrices where the  $1 \times 1$ blocks $H_{st}$ and $\hat{H}_{s,t}$ are defined by   \begin{align*} H_{st} = \left\{\begin{array}{ll}  0, & \text{if } s< t \\  M_{\theta_s}^1, &  \text{if } s=t\\
\zeta_{2,t}\left(\prod_{\ell=t+1}^{s-1}\overline{\theta_{\ell}(0, z_2)}\right)\overline{\frac{q^+_s(z_2)q^-_t(z_2)}{p_s(0,z_2)p_t(0,z_2)}}& \text{if } s>t\end{array} \right.\end{align*} and 
 \begin{align*} \hat{H}_{st} = \left\{\begin{array}{ll}  0, & \text{if } s< t \\  M_{\theta_s}^2, &  \text{if } s=t\\
\zeta_{1,t}\left(\prod_{\ell=t+1}^{s-1}\overline{\theta_{\ell}(z_1, 0)}\right)\overline{\frac{r^-_s(z_1)r^+_t(z_1)}{p_s(z_1, 0)p_t(z_1, 0)}}& \text{if } s>t\end{array} \right.,\end{align*}  respectively and each  $M^i_{\theta_s}$ is defined in terms of the coefficients of $p_s$ as in Lemma \ref{lemma:1_1}. 

We now consider a special case in which this example simplifies significantly.  Suppose $B(z)=\lambda \prod_{t=1}^m \frac{z-a_t}{1-\overline{a_t}z}$ is a single-variable finite Blaschke product and $\Theta(z)=B(z_1z_2)$.  Then letting $\theta_t=\frac{z_1z_2-a_t}{1-\overline{a_t}z_1z_2}$ for $1 \leq t \leq m-1$ and $\theta_m=\lambda \frac{z_1z_2-a_m}{1-\overline{a_m}z_1z_2}$, we have that, for all $1 \leq t \leq m$,  $p_t(z)=1+0z_1+0z_2+(-\overline{a_t})z_1z_2$ so, by Lemma \ref{lemma:1_1}, $M^1_{\theta_t}=a_t\overline{z_2}$,  $M^2_{\theta_t}=a_t\overline{z_1}$, $\gamma_{1,t}=\gamma_{2,t}=0$, $\zeta_{1,t}=\zeta_{2,t}=1$, and $A_{1,t}=A_{2,t}=B_t=1-|a_t|^2$.  Hence $q_t^+(z_2)=r_t^+(z_1)=\sqrt{1-|a_t|^2}$, $q^-_t(z_2)\sqrt{1-|a_t|^2}z_2$, and $r^-_t(z_1)=\sqrt{1-|a_t|^2}z_1.$  Note that for all $t$ with $1 \le t \le m-1$, $\theta_t(0, z_2)=\theta_t(z_1,0)=-a_t$ and $p_t(0, z_2)=p_t(z_1,0)=1$.  Thus, the formula for  $H_{st}$ simplifies to   \begin{align*}H_{st} = \left\{\begin{array}{ll}  0 & \text{if } s< t \\  a_s\overline{z_2}, &  \text{if } s=t\\\left(\prod_{\ell=t+1}^{s-1}-\overline a_{\ell}\right)\sqrt{1-|a_s|^2}\sqrt{1-|a_t|^2}\overline{z_2}, & \text{if } s>t\end{array} \right., \end{align*} and the formula for $\hat{H}_{st}$ is the above with $z_2$ replaced by $z_1$.  Thus, $M_{\Theta, \beta_2}^1=\overline{z_2}M_B^T$  and $M_{\Theta, \beta_1}^2=\overline{z_1}M_B^T$, where the scalar matrix $M_B$ is defined by \eqref{eqn:SB} and $T$ denotes the transpose.
\end{example} 

\begin{example} \label{example:z_1product} Suppose $k \in \mathbb{N}$ and $\theta$ is any degree $(m,n)$ RIF with $m \geq 1$.  Define $\Theta(z)=z_1^k\theta(z)$.  
By Example \ref{example:1_0prod}, we can decompose $\mathcal{K}_{z_1^k}$ as $\mathcal{K}_{z_1^k}=\mathcal{S}_1^{z_1^k} \oplus \mathcal{S}_2^{z_1^k}$ using the orthonormal basis $\hat{\beta_2}=\{1, z_1, z_1^2, \ldots, z_1^{k-1}\}$ for $\mathcal{S}_{2}^{z_1^k} \ominus z_2\mathcal{S}_2^{z_1^k}$, and $M^1_{z_1^k, \hat{\beta_2}}$ \ is the $k \times k$ matrix that has 1's on the first subdiagonal and 0's elsewhere.  Setting $f_i^1(z)=z_1^{i-1}$ for $1 \leq i \leq k$, the functions $g_1^1, \ldots, g_k^1 \in H^2_2(\mathbb{D})$ such that $S_{z_1}^*z_1^k=\sum_{j=1}^{k}g_j^1f_j^1$ are the constant functions defined by $g_j^1(z_2)=0$ if $1 \leq j \leq k-1$ and $g_k^1(z_2)=1$. 

Fix any decomposition  $\mathcal{K}_{\theta}=\mathcal{S}_1^{\theta} \oplus \mathcal{S}_2^{\theta}$ of $\mathcal{K}_{\theta}$ into respectively $S_{z_1}$- and $S_{z_2}$-invariant subspaces and any orthonormal basis $\beta_2^{\theta}=\{f_1^2, \ldots, f_m^2\}$ of $\mathcal{S}_2^{\theta} \ominus z_2\mathcal{S}_2^{\theta}$.  Let $g_1^2, \ldots, g_m^2 \in H^2_2(\mathbb{D})$ be the functions that satisfy $S_{z_1}^*\theta=\sum_{j=1}^{m}g_j^2f_j^2$.

We can now apply Proposition \ref{prop:productdecomp} and Theorem \ref{thm:productM} to $\Theta$ considering $z_1^k$ as the first function in the product and $\theta$ as the second, which yields a decomposition $\mathcal{K}_{\Theta}=\mathcal{S}_1 \oplus \mathcal{S}_2$ where $\beta_2=\{1, z_1, \ldots, z_1^{k-1}, z_1^kf_1^2, z_1^kf_2^2, \ldots, z_1^kf_m^2\}$ is an orthonormal basis for $\mathcal{S}_2 \ominus z_2\mathcal{S}_2$ and $M^1_{\Theta, \beta_2}$  is the block matrix \begin{align*} M_{\Theta, \beta_2}^1=\left[\begin{array}{cc} M_{z_1^k, \hat{\beta_2}}^1 & 0_{k \times m} \\ C & M_{\theta, \beta_2^{\theta}}^1\end{array}\right]\end{align*}  where $C=[c_{ij}]$ is the $m \times k$ matrix-valued function with entries  \begin{align*}c_{ij}=\overline{f_i^2(0, z_2)g_j^1(z_2)}= \left\{\begin{array}{ll} \overline{f_i^2(0, z_2)}, & \text{if} \, j=k \\ 0, & \text{otherwise} \end{array}\right.\end{align*}

The  entries in $C$ are particularly simple  if $\theta(z)=B(z_1z_2)$ for a finite Blaschke product  $B(z)=\lambda \prod_{t=1}^m \frac{z-a_t}{1-\overline{a_t}z}$ and we define $\theta_1, \ldots, \theta_m$ as in Example  \ref{example:1_1prod} and choose the decomposition of $\mathcal{K}_{\theta}$ and orthonormal basis $\beta_2^2=\{f_1^2, \ldots, f_m^2\}$ as in that example.  Recall that $f_i^2= \left(\prod_{\ell=1}^{i-1}\theta_\ell\right)\tilde{f}_1^i$ for $\tilde{f}_1^i=\frac{\sqrt{1-|a_i|^2}}{1-\overline{a_i}z_1z_2}$ for all $1 \leq i \leq m$.  
Hence, in this case, \begin{align*} c_{ij}=\left\{\begin{array}{ll} \left(\prod_{\ell=1}^{i-1} -\overline{a_{\ell}}\right) \sqrt{1-|a_i|^2}, & \text{if} \, j = k\\ 0, & \text{otherwise}\end{array}\right..\end{align*} 
Returning to the general case, note that we could also consider $\theta$ as the first function and $z_1^k$ as the second.  This yields a decomposition  $\mathcal{K}_{\theta}=\tilde{\mathcal{S}}_1 \oplus \tilde{\mathcal{S}}_2$ with orthonormal basis $\widetilde{\beta_2}=\{f_1^2, f_2^2, \ldots, f_m^2, \theta, \theta z_1, \ldots, \theta z_1^{k-1}\}$ for $\tilde{\mathcal{S}}_2 \ominus z_2\tilde{\mathcal{S}}_2$.  Then  \begin{align*} M_{\Theta, \widetilde{\beta_2}}^1=\left[\begin{array}{cc} M_{\theta, \beta^{\theta}_2}^1 & 0_{m \times k} \\ D & M_{z_1^k, \hat{\beta_2}}^1 \end{array}\right]\end{align*} where $D=[d_{ij}]$ is the $k \times m$ matrix-valued function with entries 
\begin{align*} d_{ij}=\overline{f_i^1(0, z_2)g_j^2(z_2)}=\left\{\begin{array}{ll} \overline{g_j^2(z_2)}, & \text{if} \, i=1 \\ 0, & \text{otherwise}\end{array}\right..\end{align*}
The matrix $D$ also simplifies if $\theta(z)=B(z_1z_2)$ for a finite Blaschke product $B$, but we leave those computations to the interested reader. \end{example} 

Notice that in the previous example, we  found two  $M^1_{\Theta}$ matrices for  $\Theta$ that will, in at least most cases, be different.  This raises some key questions:  \emph{If we have two decompositions with corresponding $\beta_2$ and $\widetilde{\beta_2}$, what relationships, if any, exist between $M^1_{\theta, \beta_2}$ and $M^1_{\theta, \widetilde{\beta_2}}$?  How different can they be?} We'll answer these questions as part of our explorations in the next section.


\section{Uniqueness of $M^1_\theta$} \label{sec:M}

Now that we have a method of generating $M_\theta^1$ examples, we will generalize several of our results for degree $(1,n)$ RIFs from Section \ref{sec:1n}. First, recall that Proposition \ref{prop:n1} implies that if $\deg \theta = (1,n)$,
then $M^1_\theta$ is independent of the chosen decomposition of $\mathcal{K}_\theta$ and associated orthonormal basis. Meanwhile, Proposition \ref{prop:n1B} characterizes when two degree $(1,n)$ RIFs possess the same $M^1_\theta$ function.

In this section, we first generalize Proposition \ref{prop:n1B} to all RIFs of two variables and then use that result to answer Question $1$ about the uniqueness of $M_\theta^1$ and $M_\theta^2$ in Corollary  \ref{cor:Munique}. We then apply this answer to the setting of one RIF $\theta$ with different decompositions of $\mathcal{K}_\theta$ as described at the end of Section \ref{sec:product}. 
Example \ref{ex:Mtheta_2} uses the constructions from Section \ref{sec:product} to illustrate the subtleties of this situation.

Here is some notation for this section. Let  $\theta = \lambda_1\frac{\tilde{p}}{p}$, $\phi = \lambda_2\frac{\tilde{q}}{q}$ be RIFs on $\mathbb{D}^2$ with $\deg_1 (\theta), \deg_1(\phi) \ge 1$ and with associated model space decompositions $\mathcal{K}_\theta = \mathcal{S}_1 \oplus \mathcal{S}_2$ and  $\mathcal{K}_\phi = \widetilde{\mathcal{S}}_1 \oplus \widetilde{\mathcal{S}}_2$. Let $S_\theta^1$ and $S_\phi^1$ denote the associated compressions of the shift in $z_1$, i.e. $S_{z_1}$, defined by
\[ S_\theta^1 = P_{\mathcal{S}_2} S_{z_1} |_{\mathcal{S}_2} \ \ \text{ and }  \ \  S_\phi^1 = P_{\widetilde{\mathcal{S}}_2} S_{z_1} |_{\widetilde{\mathcal{S}}_2}.\]
Then $m:=\deg_1( \theta) = \dim ( \mathcal{S}_2 \ominus z_2 \mathcal{S}_2)$ and $\tilde{m}:=\deg_1( \phi) = \dim ( \widetilde{\mathcal{S}}_2 \ominus z_2 \widetilde{\mathcal{S}}_2)$. 
Let $\beta_2=\{\frac{q_1}{p}, \dots, \frac{q_m}{p}\}$ be an orthonormal basis of $\mathcal{S}_2 \ominus z_2 \mathcal{S}_2$ of form \eqref{eqn:S2basis} and 
$\widetilde{\beta}_2=\{\frac{t_1}{q}, \dots, \frac{t_{\tilde{m}}}{q}\}$ be an orthonormal basis of $\widetilde{\mathcal{S}}_2 \ominus z_2 \widetilde{\mathcal{S}}_2$ also of form \eqref{eqn:S2basis}.
Let $M^1_\theta$ and $M^1_\phi$ denote the associated matrix-valued functions from Theorem \ref{thm:Mconstructive} defined using these bases.  We can fully determine when these two functions are point-wise unitarily equivalent on $\mathbb{T}$.  

\begin{theorem} \label{thm:uniqueness} Given the above setup, there is an $m \times m$ unitary matrix-valued function $U(\cdot)$ on $\mathbb{T}$ such that 
\begin{equation} \label{eqn:MthetaU} M^1_\theta(\tau) = U(\tau) M^1_\phi(\tau) U(\tau)^*\end{equation}
for all $\tau \in \mathbb{T}$ if and only if there are finite Blaschke products $B_1, B_2$ such that 
\begin{equation} \label{eqn:B1B2} B_1(z_2) \phi(z_1, z_2) = B_2(z_2) \theta(z_1, z_2).\end{equation}
\end{theorem}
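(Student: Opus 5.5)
The plan is to use the slice interpretation from Section \ref{sec:2var}. For every $\tau \in \mathbb{T}\setminus E_\theta$, $M^1_\theta(\tau)$ is unitarily similar to the one-variable matrix $M_{\theta_\tau}$ of \eqref{eqn:SB}, where $\theta_\tau = \theta(\cdot,\tau)$ is a finite Blaschke product in $z_1$ of degree $m$; the same holds for $\phi$. Combining this with Theorem \ref{thm:unique1}, the pointwise relation \eqref{eqn:MthetaU} at such $\tau$ holds exactly when $\theta_\tau$ and $\phi_\tau$ have the same zeros, i.e. when $\phi_\tau = \mu(\tau)\theta_\tau$ for some $\mu(\tau)\in\mathbb{T}$. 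So the theorem reduces to showing that ``slices agree up to a unimodular constant for all $\tau$'' is equivalent to \eqref{eqn:B1B2}. Implicitly $\tilde m = m$ is needed; in the forward direction this follows since the matrices have equal size.

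\emph{Forward direction.} Assume \eqref{eqn:MthetaU}. For $\tau\notin E_\theta\cup E_\phi$ (a finite set) the eigenvalues of $M^1_\theta(\tau)$ and $M^1_\phi(\tau)$ coincide with multiplicity. Hence $\tilde p(\cdot,\tau)$ and $\tilde q(\cdot,\tau)$, which are degree-$m$ polynomials in $z_1$ whose zero sets are these eigenvalues, are proportional. Then $p(\cdot,\tau)$ and $q(\cdot,\tau)$ are also proportional, since $p(\cdot,\tau)$ is recovered by reflecting $\tilde p(\cdot,\tau)$ in $z_1$ up to a factor depending only on $\tau$. This gives $p(z_1,\tau)\,c_q(\tau) = q(z_1,\tau)\,c_p(\tau)$, where $c_p(\tau), c_q(\tau)$ are suitable nonzero coefficients, for instance the coefficients of $z_1^0$, chosen nonvanishing after choosing a coefficient not identically zero in $z_2$. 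Since both sides are polynomial in $z_2$ and the identity holds for infinitely many $\tau$, we get the polynomial identity $r_1(z_2)p(z) = r_2(z_2)q(z)$.

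The main obstacle is arranging that $r_1, r_2$ have no zeros in $\overline{\mathbb{D}}$. To handle it, I would take $r_1, r_2$ to be the $z_1^0$ coefficients $q(0,z_2)$ and $p(0,z_2)$. These are nonvanishing on $\overline{\mathbb{D}}$ because $p$ and $q$ have no zeros on $\mathbb{D}^2\cup(\mathbb{D}\times\mathbb{T})$. From there, exactly as in the proof of Proposition \ref{prop:n1B}, I would set $C_j = \tilde r_j/r_j$ and take reflections of both sides of $r_1p=r_2q$ with respect to the total degrees. This produces monomials $\gamma_j z_2^{\ell_j}$ such that $B_1=\gamma_1z_2^{\ell_1}C_1$ and $B_2=\gamma_2z_2^{\ell_2}C_2$ satisfy \eqref{eqn:B1B2} (with the roles of $\theta,\phi$ matched to the statement). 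Degree bookkeeping forces the leftover unimodular factor to be a monomial in $z_2$.

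\emph{Converse.} Assume \eqref{eqn:B1B2}. Then for every $\tau\in\mathbb{T}$, $\phi_\tau = (B_2(\tau)/B_1(\tau))\theta_\tau$ with $|B_2(\tau)/B_1(\tau)|=1$. Thus for $\tau\notin E_\theta\cup E_\phi$, Theorem \ref{thm:unique1} and the slice identification give a unitary $U(\tau)$ with \eqref{eqn:MthetaU}. For the finitely many exceptional $\tau$, I would use continuity: $M^1_\theta$ and $M^1_\phi$ are continuous on $\mathbb{T}$ by Theorem \ref{thm:Mconstructive}. Choose $\tau_k\to\tau$ and use compactness of the unitary group to extract a convergent subsequence $U(\tau_k)\to U$, which gives the relation at $\tau$. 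No measurability or continuity of $U(\cdot)$ is required by the statement, so this completes the argument.
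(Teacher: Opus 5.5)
Your proposal is correct, but it differs from the paper in both directions.

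\textbf{Forward direction.} The paper never passes to the denominator polynomials. It sets $F=\theta/\phi$ and uses $\theta_\tau=\lambda(\tau)\phi_\tau$ on an arc of $\mathbb{T}$. A local expansion $F=F_1(z_2)+(z_1-w_1)F_2$ and the identity theorem then show $\partial F/\partial z_1\equiv 0$. It finishes by citing the fact that a rational function unimodular on $\mathbb{T}$ is a quotient of finite Blaschke products.

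You instead turn equality of eigenvalues into proportionality of $\tilde p(\cdot,\tau)$ and $\tilde q(\cdot,\tau)$. This step is legitimate, though you should say why. For $\tau\notin E_\theta$, the $z_1^m$-coefficient of $\tilde p(\cdot,\tau)$ is $\tau^n\overline{p(0,\tau)}\neq 0$. Also $p(\cdot,\tau)$ has no zeros in $\overline{\mathbb{D}}$, so nothing cancels in $\theta_\tau$. You then normalize by $p(0,\tau)$ and $q(0,\tau)$ and get the polynomial identity $q(0,z_2)p=p(0,z_2)q$. The algebra of Proposition \ref{prop:n1B} then gives explicit $B_1,B_2$.

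This route is more elementary and makes the Blaschke factors explicit. It is in fact the same identity the paper borrows from Proposition \ref{prop:n1B} in its own converse. The cost is that it leans on the atoral normalization and on $p,q$ having no zeros on $\mathbb{D}\times\mathbb{T}$, whereas the paper's argument works with $\theta/\phi$ directly.

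\textbf{Converse.} Your compactness argument at the finitely many exceptional $\tau$ is valid, because the statement imposes no regularity on $U(\cdot)$. The paper instead solves $A(\tau)U(\tau)=B(\tau)$ by Cramer's rule. This makes $U$ a rational function of $z_2$ whose singularities on $\mathbb{T}$ are removable. Its $U$ is therefore continuous, unitary on all of $\mathbb{T}$, and computable. That explicit form is what Example \ref{ex:Mtheta_2} uses to write $U$ down and to analyze $T_U$; your existence argument does not give this.
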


\begin{proof} ($\Rightarrow$) Assume that such a matrix-valued function $U$ exists. Fix $\tau \in \mathbb{T} \setminus  (E_\phi \cup E_\theta).$ 
Then the dimensions of $M^1_\theta(\tau)$ equal the dimensions of $M^1_\phi(\tau)$, so the degrees of the slice functions  $\theta_\tau$ and $\phi_\tau$ are equal and $m = \tilde{m}$.
$M^1_\theta(\tau)$ and $M^1_\phi(\tau)$ certainly have the same eigenvalues, and by the discussion in Section \ref{sec:2var}, these eigenvalues are the zeros of  $\theta_{\tau}$ and $\phi_{\tau}$, respectively.  Thus, $\theta_\tau$ and $\phi_\tau$ are finite Blaschke products with exactly the same set of zeros (including multiplicity). This means that there is a constant $\lambda(\tau) \in \mathbb{T}$ such that for all $z_1 \in \mathbb{C}$, 
\begin{equation} \label{eqn:1} \theta(z_1, \tau) = \theta_\tau(z_1) =\lambda(\tau) \phi_\tau(z_1) =\lambda(\tau) \phi(z_1, \tau).\end{equation}
In particular, for each  $\tau \in  \mathbb{T} \setminus  (E_\phi \cup E_\theta)$, there is a  $\lambda(\tau)\in \mathbb{T}$ such that \eqref{eqn:1} holds.

To prove the claim, define the rational function $F$ by 
\[ F(z_1, z_2) = \frac{\theta(z_1, z_2)}{\phi(z_1, z_2)},\]
where we cancel any common factors
 and note that we need to show that $F$ is a ratio of two finite Blaschke products.
Choose $(w_1, \gamma) \in \mathbb{D} \times \mathbb{T}$ such that $\phi(w_1, \gamma) \ne 0$ and $\gamma \not \in E_\phi \cup E_\theta.$ Then $F$ is analytic near $(w_1, \tau)$, so we can write 
\[ F(z_1, z_2) = F_1(z_2) + (z_1-w_1) F_2(z_1, z_2),\]
for functions $F_1, F_2$ analytic  in a neighborhood $V \subseteq \mathbb{C}^2$ of $(w_1, \gamma)$. By shrinking $V$ if necessary, we can assume $V = V_1 \times V_2$ where $V_1, V_2$ are open disks centered around $w_1, \gamma$, $V_2 \cap (E_\phi\cup E_\theta) = \emptyset$, and $\phi$ does not vanish on $V$. 
Fix any $\tau \in V_2 \cap \mathbb{T}$ and $z_1 \in V_1$. Then \eqref{eqn:1} shows that 
\[ F_1(\tau) + (z_1-w_1) F_2(z_1, \tau) = \lambda(\tau) =  F_1(\tau) + (w_1-w_1) F_2(w_1, \tau),\]
so $F_2(z_1, \tau) =0$. This implies that for each $z_1 \in V_1$, the one-variable holomorphic function $G_{z_1}(z_2): = F_2(z_1, z_2)$ vanishes on the curve $V_2 \cap \mathbb{T}$. This implies $G_{z_1} \equiv 0$, so $F_2 \equiv 0$. Thus, in a neighborhood of $(w_1, \gamma)$, $F(z_1, z_2) = F_1(z_2)$ depends only on $z_2$. Then $\frac{\partial F}{\partial z_1} \equiv 0$ on that neighborhood and hence on the entire domain of $F$, so $F$ is a rational function in $z_2$ only. 

By \eqref{eqn:1}, we also know that $|F(\tau)| =1$ for almost every $\tau \in \mathbb{T}$. Since we already cancelled any common factors from the numerator and denominator of $F$, this implies that $F$ must be continuous on a neighborhood of $\mathbb{T}$.  Then \cite[p. 50, Corollary 3.5.4]{gmr18} implies that $F = \frac{B_1}{B_2}$, for some pair of finite Blaschke products $B_1$ and $B_2$. It follows immediately that on $\mathbb{D}^2$,
\[  \frac{\theta(z_1, z_2)}{\phi(z_1, z_2)} = \frac{B_1(z_2)}{B_2(z_2)},\]
which establishes \eqref{eqn:B1B2}.

($\Leftarrow$) For the reverse direction, assume \eqref{eqn:B1B2} holds and fix $\tau \in \mathbb{T}\setminus(E_{\phi} \cup E_{\theta})$.  Notice that $\theta_{\tau}(z_1)=\lambda \phi_{\tau}(z_1)$ for $\lambda = B_1(\tau)/B_2(\tau) \in \mathbb{T}$, which implies that $\mathcal{K}_{\theta_\tau}=\mathcal{K}_{\phi_\tau}$ and hence $S_{\theta_\tau}=S_{\phi_\tau}$.  Thus, $M^1_{\theta}(\tau)$ and $M^1_{\phi}(\tau)$ are both matrix representations of the same operator but potentially with respect to different orthonormal bases, so there must exist a unitary change of basis matrix $U(\tau)$ such that \eqref{eqn:MthetaU} holds.  

To show that the equality actually holds for all $\tau \in \mathbb{T}$ and not just $\mathbb{T}\setminus(E_{\phi} \cup E_{\theta})$, we construct an explicit formula for $U(\tau)=[u_{ij}(\tau)]$. By change of basis, we know that, for each $\tau \in \mathbb{T}\setminus(E_{\phi} \cup E_{\theta})$ and $1 \leq j \leq m$, $[u_{1j}(\tau),  \ldots, u_{mj}(\tau)]^{t}$ is the unique solution to
\begin{align}\label{eqn:unitary1} \sum_{i=1}^m u_{ij}(\tau)\frac{q_i}{p}(\cdot, \tau)= \frac{t_j}{q}(\cdot, \tau). \end{align}  As noted in the proof of Proposition \ref{prop:n1B}, \eqref{eqn:B1B2} implies that $r_1(z_2)q(z)=cr_2(z_2)p(z)$ for some nonzero constant $c$, where $r_1$ and $r_2$ are the denominators of the Blaschke products $B_1$ and $B_2$, respectively, and thus do not vanish on $\overline{\mathbb{D}}$.  Thus, by multiplying both sides of \eqref{eqn:unitary1} by $r_2(\tau)p(\cdot, \tau)$, we see that $U(\tau)$ must satisfy 
\begin{align}\label{eqn:unitary2} \sum_{i=1}^m u_{ij}(\tau)r_2(\tau)q_i(\cdot, \tau) = \frac{1}{c}r_1(\tau)t_{j}(\cdot, \tau)\end{align} for all $1 \leq j \leq m$ and $\tau \in \mathbb{T}\setminus(E_{\phi} \cup E_{\theta})$.  Note that, for all $1 \leq i \leq m$, $\hat{q_i}(z):=r_2(z_2)q_i(z)$ and $\hat{t_i}(z):=\tfrac{1}{c}r_1(z_2)t_i(z)$ are polynomials in $z_1$ and $z_2$ whose degree in $z_1$ is at most $m-1$.  Thus, there exist polynomials $q_{ik}(z_2), t_{ik}(z_2)$ for $1 \leq i \leq m, 0 \leq k \leq m-1$ such that 
\begin{align*}\hat{q_i}(z)=\sum_{k=0}^{m-1} q_{ik}(z_2)z_1^k \qquad \hat{t_i}(z)=\sum_{k=0}^{m-1}t_{ik}(z_2)z_1^k.\end{align*}
Let $A(z_2)=[q_{j(i-1)}(z_2)]$ and $B(z_2)=[t_{j(i-1)}(z_2)]$.  Then satisfying \eqref{eqn:unitary2} for all $1 \leq j \leq m$ is equivalent to satisfying $A(\tau)U(\tau)=B(\tau).$ For all  $\tau \in \mathbb{T}\setminus(E_{\phi} \cup E_{\theta})$, $\det(A(\tau)) \neq 0$ since otherwise one of $r_2(\tau)q_1(\cdot, \tau), \ldots, r_2(\tau)q_m(\cdot, \tau)$ would be a linear combination of the others contradicting the assumption that $\frac{q_1}{p}(\cdot, \tau), \ldots \frac{q_m}{p}(\cdot, \tau)$ is a basis for $\mathcal{K}_{\theta_{\tau}}$.  
Thus, by Cramer's Rule,  for $\tau \in  \mathbb{T}\setminus(E_{\phi} \cup E_{\theta})$,$$u_{ij}(\tau)=\frac{\det(A_{B}^{i,j}(\tau))}{\det(A(\tau))},$$ where $A_{B}^{i,j}(\tau)$ is formed from $A(\tau)$ by replacing the $i$th column of $A(\tau)$ by the $j$th column of $B(\tau)$.   Note that $U(\tau)$ is the boundary function of $U(z_2)=[\det(A_{B}^{i,j}(z_2))/\det(A(z_2))]$, which  is a matrix-valued function whose entries are all rational functions in $z_2$ having a common set of singularities.  Since $U(z_2)$ is unitary for all but at most finitely many points in $\mathbb{T}$, all of these singularities that occur in $\mathbb{T}$ must be removable singularities.  Removing the singularities, we obtain  that $U$ must now be unitary-valued on $\mathbb{T}$ and \eqref{eqn:1} must hold for all $\tau \in \mathbb{T}$ since $M^1_{\theta}$ and $M^1_{\phi}$ are continuous on $\overline{\mathbb{D}}$. 
\end{proof} 

Versions of Theorem \ref{thm:Mconstructive} and Theorem \ref{thm:uniqueness} hold if we switch the roles of $z_1$ and $z_2$, require $\deg_2 \theta, \deg_2 \phi \ge 1$, and define 
\[ S_\theta^2 = P_{\mathcal{S}_1} S_{z_2} |_{\mathcal{S}_1} \ \ \text{ and }  \ \  S_\phi^2 = P_{\widetilde{\mathcal{S}}_1} S_{z_2} |_{\widetilde{\mathcal{S}}_1}.\]
Let $M_\theta^1,$ $M_\theta^2$, $M_\phi^1$, $M_\phi^2$ be the matrix-valued rational functions for $S_\theta^1$, $S_\theta^2$, $S_\phi^1$, $S_\phi^2$ guaranteed by Theorem \ref{thm:Mconstructive}. Using this notation, we have the following corollary, which fully answers Question 1 by characterizing when $\theta = \lambda \phi$ using only the symbols associated to the two-variable compressions of shifts.

\begin{corollary} \label{cor:Munique}There are unitary matrix-valued functions $U_1, U_2$ on $\mathbb{T}$ such that 
\[ 
M^1_\theta(\tau) = U_1(\tau) M^1_\phi(\tau) U_1(\tau)^* \quad \text{ and } \quad M^2_\theta(\tau) = U_2(\tau) M^2_\phi(\tau) U_2(\tau)^* \quad  \] for all  $\tau \in \mathbb{T}$
  if and only  $\theta=\lambda \phi,$ for some $\lambda \in \mathbb{T}$.
\end{corollary}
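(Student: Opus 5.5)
The plan is to apply Theorem \ref{thm:uniqueness} twice, once in each variable, and then compare the two resulting factorizations.

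For the reverse direction, suppose $\theta = \lambda\phi$. Then \eqref{eqn:B1B2} holds with $B_1 \equiv \lambda$ and $B_2 \equiv 1$, which are constant (degree zero) finite Blaschke products. Theorem \ref{thm:uniqueness} then gives a unitary $U_1$ with the required relation. Its $z_1 \leftrightarrow z_2$ version, stated after the theorem, gives $U_2$ in the same way. If one of the degrees vanishes, say $\deg_1\theta = 0$, then $M^1$ is empty on both sides and that half of the statement is vacuous.

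For the forward direction, the two unitary equivalences and the two versions of Theorem \ref{thm:uniqueness} produce finite Blaschke products $B_1, B_2$ in $z_2$ and $C_1, C_2$ in $z_1$ with
\[ \frac{\theta(z)}{\phi(z)} = \frac{B_1(z_2)}{B_2(z_2)} = \frac{C_1(z_1)}{C_2(z_1)}. \]
The right-hand equality expresses a rational function of $z_2$ alone as a rational function of $z_1$ alone. Differentiating in $z_1$ shows the common value is independent of $z_1$, and differentiating in $z_2$ shows it is independent of $z_2$. So it equals a constant $\lambda$, and $\theta = \lambda\phi$. Evaluating on $\mathbb{T}^2$ away from the finitely many singularities, where both $\theta$ and $\phi$ have modulus one, gives $|\lambda| = 1$.

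The main obstacle is the degenerate case where one of the partial degrees is zero, because then Theorem \ref{thm:uniqueness} (which assumes $\deg_1 \ge 1$) does not literally apply. I would first argue that $\deg_1\theta = \deg_1\phi$ and $\deg_2\theta = \deg_2\phi$, since the matrix sizes must agree. Suppose, for instance, $\deg_1 = 0$ for both. Then $\theta$ and $\phi$ are finite Blaschke products in $z_2$ alone, and $\mathcal{S}_1 = \mathcal{K}_\theta$ and $\mathcal{S}_1 = \mathcal{K}_\phi$. The version of Theorem \ref{thm:uniqueness} in the $z_2$ variable then shows that $\theta/\phi$ is a ratio of finite Blaschke products in $z_1$. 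Since $\theta/\phi$ also depends only on $z_2$, the same argument forces it to be a unimodular constant.

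If both partial degrees are zero, then $\theta$ and $\phi$ are unimodular constants, and the conclusion is immediate.
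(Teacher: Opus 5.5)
Your proposal is correct and matches the paper's argument: apply Theorem \ref{thm:uniqueness} and its $z_1 \leftrightarrow z_2$ version to see that $\theta/\phi$ depends only on $z_2$ and also only on $z_1$, so it is a constant, which is unimodular because $|\theta| = |\phi| = 1$ almost everywhere on $\mathbb{T}^2$. Your separate handling of vanishing partial degrees goes slightly beyond the paper, whose setting implicitly assumes $\deg_1, \deg_2 \ge 1$ for both functions, but it is harmless.
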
 

\begin{proof} The backward direction is immediate from Theorem \ref{thm:uniqueness}. To prove the forward direction, just note that the forward direction of the proof of Theorem \ref{thm:uniqueness} implies that $\frac{\theta}{\phi}$ is a rational function of only $z_2$ and a symmetric argument says  $\frac{\theta}{\phi}$ is a rational function of only $z_1$. Thus, $\frac{\theta}{\phi} \equiv \lambda$ for some $\lambda \in \mathbb{C}$. Since $|\theta(\tau)| = |\phi(\tau)| =1$ for almost every $\tau \in \mathbb{T}^2$, we must have $\lambda \in \mathbb{T}$. 
\end{proof}

Theorem \ref{thm:uniqueness} also shows that for a single $\theta$,  the choice of decomposition of $\mathcal{K}_\theta$ has a limited effect on the resulting compressed shift $S_\theta^1$:

\begin{corollary} \label{cor:2decomps} Assume that  $\mathcal{K}_\theta = \mathcal{S}_1 \oplus \mathcal{S}_2= \widetilde{\mathcal{S}}_1 \oplus \widetilde{\mathcal{S}}_2$ gives two orthogonal decompositions of $\mathcal{K}_\theta$ and let 
\[ S_\theta^1 = P_{\mathcal{S}_2} S_{z_1} |_{\mathcal{S}_2} \ \ \text{ and }  \ \ \tilde{S}_\theta^1 = P_{\tilde{\mathcal{S}}_2} S_{z_1} |_{\tilde{\mathcal{S}}_2}.\]
For any choices of orthonormal bases $\beta_2$ and $\widetilde{\beta_2}$ of $\mathcal{S}_2 \ominus z_2 \mathcal{S}_2$ and $\tilde{\mathcal{S}}_2 \ominus z_2 \tilde{\mathcal{S}}_2$, respectively, the associated matrix-valued functions $M^1_{\theta, \beta_2}$ and $M^1_{\theta, \widetilde{\beta_2}}$ from Theorem \ref{thm:Mconstructive} satisfy
\[  M^1_{\theta, \beta_2}(\tau) = U(\tau)M^1_{\theta, \widetilde{\beta_2}}(\tau) U(\tau)^*,\]
where each $U(\tau)$ is an $m\times m$ unitary matrix and $\tau \in \mathbb{T}.$
\end{corollary}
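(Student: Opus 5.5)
This is a direct application of Theorem \ref{thm:uniqueness} with $\phi = \theta$. Take $\phi := \theta$ in the setup preceding that theorem. The model space is the same, $\mathcal{K}_\phi = \mathcal{K}_\theta$, but it now carries the second decomposition $\widetilde{\mathcal{S}}_1 \oplus \widetilde{\mathcal{S}}_2$ and the basis $\widetilde{\beta_2}$. The setup in Theorem \ref{thm:uniqueness} allows the decompositions of $\mathcal{K}_\theta$ and $\mathcal{K}_\phi$, and the bases $\beta_2$ and $\widetilde{\beta}_2$, to be chosen independently. The only requirement is that each basis have the form \eqref{eqn:S2basis}. So $M^1_\theta$ in that theorem is $M^1_{\theta,\beta_2}$ and $M^1_\phi$ is $M^1_{\theta,\widetilde{\beta_2}}$.

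Next I check the hypotheses. We need $\deg_1\theta = m \ge 1$, which is implicit because the matrices $M^1_\theta$ are defined. We also need both bases to have the form \eqref{eqn:S2basis}, which is guaranteed by the discussion in Section \ref{sec:2var}, namely \cite[Theorem 2.2]{bg20}. If an arbitrary orthonormal basis were intended, I would note that any orthonormal basis of the $m$-dimensional space $\widetilde{\mathcal{S}}_2\ominus z_2\widetilde{\mathcal{S}}_2$ is a constant unitary change of a basis of that form. Each element is then still $\tfrac{\text{poly}}{p}$ with the same degree bounds, so the form \eqref{eqn:S2basis} is automatic.

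The ($\Leftarrow$) direction of Theorem \ref{thm:uniqueness} requires finite Blaschke products $B_1, B_2$ with $B_1(z_2)\phi = B_2(z_2)\theta$. Choosing $B_1 = B_2 \equiv 1$ trivially gives $\theta = \theta$. The theorem then supplies a unitary-valued $U$ on all of $\mathbb{T}$ with $M^1_{\theta,\beta_2}(\tau) = U(\tau) M^1_{\theta,\widetilde{\beta_2}}(\tau) U(\tau)^*$, as claimed.

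The only point needing care is the "for all $\tau\in\mathbb{T}$" part, including points of the exceptional set $E_\theta$. This is already handled inside the proof of Theorem \ref{thm:uniqueness} by the Cramer's rule construction and the removable-singularity argument. I therefore do not expect a genuine obstacle; the main thing is to confirm that nothing in that proof used $\theta\ne\phi$ or the decompositions being the same. I would also remark that in this case $r_1 = r_2 = 1$ and $c = 1$ in that proof, which simplifies the construction of $U$.
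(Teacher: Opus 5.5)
Your proposal is correct and follows the paper's own route. The paper gives no separate proof and presents the corollary as an immediate consequence of the ($\Leftarrow$) direction of Theorem \ref{thm:uniqueness} with $\phi=\theta$ and $B_1=B_2\equiv 1$. The validity on all of $\mathbb{T}$, including the exceptional set, already comes from the Cramer's-rule and removable-singularity step in that theorem's proof.
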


We illustrate this Corollary \ref{cor:2decomps} with the following example:

\begin{example} \label{ex:Mtheta_2} Let $\theta(z) = \frac{2z_1z_2-z_1-z_2}{2-z_1-z_2}$ and set $\Theta(z) = z_1 \theta(z).$  To put us in the setting of Corollary \ref{cor:2decomps}, we need to find two orthogonal decompositions of $\mathcal{K}_{\Theta}$.  We begin by considering $\theta$.  Applying Lemma \ref{lemma:1_1} to $\theta$, we find that $A_2=4$, $\gamma_2=-2$, $\zeta_2=-1$, $B=0$, and $q^{\pm}(z_2)=\sqrt{2}(1-z_2)$,  so there is a unique decomposition of $\mathcal{K}_{\theta}=\mathcal{S}^{\theta}_1 \oplus \mathcal{S}^{\theta}_2$ into respectively $S_{z_1}$- and $S_{z_2}$-invariant subspaces and, setting   $f_1^2(z):=\frac{\sqrt{2}(1-z_2)}{2-z_1-z_2}$, $\beta_2^{\theta}:=\{f_1^2\}$ is an orthonormal basis for $\mathcal{S}_2^{\theta} \ominus z_2\mathcal{S}_2^{\theta}$. Moreover, $M^1_{\theta}=\frac{1}{2-\overline{z_2}}$ and $S_{z_1}^*\theta=g_1^2f_1^2$ for $g_1^2(z_2)=-\frac{\sqrt{2}(1-z_2)}{2-z_2}$ since $\zeta_2=-1$. 

By Example \ref{example:z_1product}, we have two decompositions of $\mathcal{K}_{\Theta}$ as $\mathcal{K}_{\Theta}=\mathcal{S}_1 \oplus \mathcal{S}_2$ with orthonormal basis $\beta_2:=\{1, z_1f_1^2\}$ for $\mathcal{S}_2 \ominus z_2\mathcal{S}_2$ and as $\mathcal{K}_{\Theta}=\tilde{S}_1 \oplus \tilde{\mathcal{S}}_2$ with orthonormal basis $\widetilde{\beta_2}:=\{f_1^2, \theta\}$.  Thus, by the formulas in that example,  \[M^1_{\theta, \beta_2}=\frac{1}{2-\overline{z_2}}\begin{bmatrix}0 & 0 \\ \sqrt{2}(1-\overline{z_2}) & 1\end{bmatrix}  \quad \text{and} \quad M^1_{\theta, \widetilde{\beta_2}} = \frac{1}{2-\overline{z_2}}\begin{bmatrix} 1 & 0 \\ -\sqrt{2}(1-\overline{z_2}) & 0 \end{bmatrix}. \]
since as noted in the example, $M^1_{z_1}=0$.

We can now apply the algorithm from the proof of Theorem \ref{thm:uniqueness} to find $U(\tau)$.  Since $p=q=2-z_1-z_2$, $$\hat{q_1}=(2-z_2)+(-1)z_1, \quad \hat{q_2}=0+\sqrt{2}(1-z_2)z_1$$ and $$\hat{t_1}=\sqrt{2}(1-z_2)+0z_1, \quad  \hat{t_2}=-z_2+(2z_2-1)z_1.$$  
Thus, $A(z_2)=\begin{bmatrix} 2-z_2 & 0\\ -1 & \sqrt{2}(1-z_2)\end{bmatrix},$ $B(z_2)=\begin{bmatrix}  \sqrt{2}(1-z_2) & -z_2 \\ 0 & 2z_2-1 \end{bmatrix},$  and \begin{align*} U(z_2)&=\frac{1}{\sqrt{2}(2-z_2)(1-z_2)}\begin{bmatrix} 2(1-z_2)^2 &  -\sqrt{2}z_2(1-z_2) &  \\ \sqrt{2}(1-z_2) &-2(1-z_2)^2 \end{bmatrix}\\&=\frac{1}{2-z_2}\begin{bmatrix}  \sqrt{2}(1-z_2) &  -z_2 \\  1 & -\sqrt{2}(1-z_2) \end{bmatrix}\end{align*} after cancelling common factors. 
It is easy to double check that, for all $\tau \in \mathbb{T}$,  $U(\tau)$ is a unitary matrix and $M^1_{\theta, \beta_2}(\tau) = U(\tau) M^1_{\theta, \widetilde{\beta_2}}(\tau) U(\tau)^*$.

Note that since $U$ and $M_{\theta, \beta_2}^{1 \, *}$ are bounded and analytic on $\mathbb{D}$ and $U$ is unitary-valued on $\mathbb{T}$, it follows from basic facts about matrix-valued Toeplitz operators that $T_U$ is an isometry and $T_{U}^*T_{M^1_{\theta, \beta_2}}T_{U}=T_{M^1_{\theta, \widetilde{\beta_2}}}$.  However, $T_U$ is not a unitary operator and $T_UT_{M^1_{\theta, \widetilde{\beta_2}}}T_U^* \neq T_{M^1_{\theta}, \beta_2}$. To see this, let $\vec{r}(z)=\begin{bmatrix}0 \\ 1\end{bmatrix}$.  Then $||\vec{r}||_{H^2_2}=1$ but \[T_U^*\vec{r}=\begin{bmatrix} P_{H^2_2}\left(\frac{1}{2-\overline{z_2}}\right) \\P_{H^2_2}\left(\frac{-\sqrt{2}(1-\overline{z_2})}{2-\overline{z_2}}\right) \end{bmatrix}=\begin{bmatrix}   \frac{1}{2}\\  -\frac{\sqrt{2}}{2}\end{bmatrix},\] which is not norm one. Above, $P_{H^2_2}$ denotes the orthogonal projection from $L^2(\mathbb{T})$ with variable $z_2$ to $H^2_2$.  Moreover, similar computations show that \[T_{U}T_{{M}^1_{\theta, \widetilde{\beta_2}}}T_{U}^*\vec{r}=\frac{1}{4(2-z_2)}\begin{bmatrix}\sqrt{2}\\ 3-2z_2\end{bmatrix}\] while the first component of $T_{M^1_{\theta, \beta_2}}\vec{r}$ clearly must be $0$.

While the unitary operator $U(\tau)$ we obtained above depends on $\tau$, it is natural to ask whether it is possible for this example to establish the matrix equality in Corollary \ref{cor:2decomps} using a constant unitary matrix.  The answer is no. If we consider an arbitrary  matrix $U=\begin{bmatrix} a & b \\ c& d \end{bmatrix}$ that satisfies both $M^1_{\theta, \beta_2}(\tau)U=UM^1_{\theta, \widetilde{\beta_2}}(\tau)$ and $UU^*=I$, it is easy to see that the resulting system of equations forces $|b|=\frac{1}{|2-\tau|}$, so the matrix must vary with $\tau$.\end{example}


\section{Non-Uniqueness of $W(S_\theta^1)$}  \label{sec:W}

In this section, we investigate Question $2$, which asks if the equality $W(S^j_\theta)= W(S^j_\phi)$ for $j=1,2$ guarantees that RIFs $\theta$ and $\phi$ satisfy a nice relationship. Again, we will use the constructions from Section \ref{sec:product} to illustrate the challenges that arise. First, as noted in Theorem \ref{thm:unique1}, in the one-variable setting, $W(S_\theta) = W(S_\phi)$ if and only if $\theta = \lambda \phi$ for some $\lambda \in \mathbb{T}$. In the two-variable setting of $S^1_{\theta}, S^1_{\phi}$, there is an interesting analogue of the backwards direction of that statement (up to closures). Specifically, we have the following corollary of Theorem \ref{thm:uniqueness}: 

\begin{corollary} \label{cor:B1B2} Let $\theta, \phi$ be RIFs on $\mathbb{D}^2$. If there exist finite Blaschke products $B_1, B_2$ such that 
$B_1(z_2) \phi(z_1, z_2) = B_2(z_2) \theta(z_1, z_2),$ then $\text{Closure }(W(S^1_{\theta}))= \text{Closure }(W(S^1_{\phi})).$
\end{corollary}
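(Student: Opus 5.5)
By Theorem \ref{thm:uniqueness}, the hypothesis gives a unitary matrix-valued function $U$ on $\mathbb{T}$ with $M^1_\theta(\tau) = U(\tau) M^1_\phi(\tau) U(\tau)^*$ for every $\tau \in \mathbb{T}$. In particular $W(M^1_\theta(\tau)) = W(M^1_\phi(\tau))$ for each $\tau \in \mathbb{T}$, since the numerical range of a matrix is a unitary invariant. The plan is to show that for any RIF $\theta$ (with $\deg_1\theta\ge1$; if $\deg_1 \theta=0$ then the relation forces $\deg_1\phi=0$ too and both numerical ranges are empty) the closure of $W(S^1_\theta)$ is determined solely by the family $\{W(M^1_\theta(\tau))\}_{\tau\in\mathbb{T}}$. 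More precisely, I would prove
\[
\text{Closure}\,(W(S^1_\theta)) \;=\; \text{Closure}\Big(\operatorname{conv}\bigcup_{\tau\in\mathbb{T}} W(M^1_\theta(\tau))\Big).
\]
Applying this formula to $\theta$ and $\phi$ then finishes the proof.

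Since $S^1_\theta$ is unitarily equivalent to $T_{M^1_\theta}$ by Theorem \ref{thm:Mconstructive}, it suffices to prove the displayed formula for a matrix Toeplitz operator $T_M$ with continuous symbol $M$ on $\mathbb{T}$.

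For the inclusion ``$\subseteq$'', take $\vec f\in H^2_2(\mathbb{D})^m$ of norm one. Then
\[
\langle T_M\vec f,\vec f\rangle=\int_{\mathbb{T}}\langle M(\tau)\vec f(\tau),\vec f(\tau)\rangle\,dm(\tau),
\]
and wherever $\vec f(\tau)\ne0$ the integrand equals $\|\vec f(\tau)\|^2$ times a point of $W(M(\tau))$. With weights $\|\vec f(\tau)\|^2$ integrating to $1$, this expresses $\langle T_M\vec f,\vec f\rangle$ as an average of points of the compact set $\bigcup_\tau W(M(\tau))$ (compact because $M$ is continuous). Hence it lies in the closed convex hull.

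For ``$\supseteq$'', since $\text{Closure}(W(T_M))$ is convex, it suffices to show that each $\langle M(\tau_0)v,v\rangle$ with $\|v\|=1$ lies in it. I would use normalized Szeg\H{o} kernels $k_r(z_2)=\sqrt{1-r^2}/(1-r\overline{\tau_0}z_2)$ and take $\vec f_r=k_r v$. The quantity $\langle T_M\vec f_r,\vec f_r\rangle=\int\langle M(\tau)v,v\rangle |k_r(\tau)|^2\,dm$ is a Poisson integral of a continuous function, so it tends to $\langle M(\tau_0)v,v\rangle$ as $r\to1$.

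The main obstacle is making sure continuity of $M^1_\theta$ on all of $\mathbb{T}$, including the exceptional set, is available; Theorem \ref{thm:Mconstructive} gives continuity on $\overline{\mathbb{D}}$, so this should be fine. One should also note that the unitary equivalence of Theorem \ref{thm:uniqueness} holds on all of $\mathbb{T}$, which is exactly what its proof provides.
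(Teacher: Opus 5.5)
Your proof is correct and follows the paper's argument: you dispose of the $\deg_1 = 0$ case separately, then combine the pointwise unitary equivalence from Theorem \ref{thm:uniqueness} with the fact that the closure of $W(S^1_\psi)$ equals the convex hull of $\bigcup_{\tau\in\mathbb{T}} W(M^1_\psi(\tau))$. The only difference is that the paper cites this formula from \cite[Corollary 3.4]{bg17}, whereas you prove it directly via the integral representation of $\langle T_M\vec f,\vec f\rangle$ and normalized Szeg\H{o} kernels, which is a valid self-contained substitute.
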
 

\begin{proof} First consider the case when $\deg_1(\phi)=0$. Then our assumptions imply that $\deg_1(\theta)=0$ as well. By our comments after the statement of Theorem \ref{thm:Mconstructive}, $W(S^1_{\theta}) =\emptyset = W(S^1_{\phi}).$ An analogous argument holds if $\deg_1(\theta)=0$.

Now assume $\deg_1(\phi), \deg_1(\theta) \ge 1$. Then \cite[Corollary 3.4]{bg17} states that for any such RIF $\psi$ (in particular, for $\theta$ and $\phi$), the closure of $W(S_{\psi}^1)$ is the convex hull of $\bigcup_{\tau \in \mathbb{T}} W(M^1_{\psi}(\tau))$. By Theorem \ref{thm:uniqueness}, 
each $M_\theta^1(\tau)$ is unitarily similar to $M_\phi^1(\tau)$ and hence, $W(M_\theta^1(\tau)) = W(M_\phi^1(\tau))$ for each $\tau \in \mathbb{T}$. Thus, the closure of  $W(S^1_{\theta})$ must equal the closure of $W(S^1_{\phi}).$ \end{proof}

Corollary \ref{cor:B1B2} only shows equality up to taking closures. However, Proposition \ref{prop:n1B} shows that if degree $(1,n)$ RIFs $\theta$ and $\phi$ only differ by an inner factor in $z_2$, then $W(S^1_{\theta})= W(S^1_{\phi}).$ Even in that degree $(1,n)$ case, two RIFs $\theta, \phi$ can yield equal numerical ranges without differing only by an inner factor in $z_2$. Consider the following example.

\begin{example} \label{ex:nrunique1} Let $p(z) = 2-z_1-z_2$ and $q(z) = 2- z_1 - z_2^2$ and set 
\[ \theta(z) = \frac{2 z_1 z_2-z_1-z_2}{2-z_1-z_2}  \ \ \text{ and } \ \  \phi(z) = \frac{2 z_1 z^2_2-z_1-z^2_2}{2-z_1-z^2_2}.\]
Then by Proposition \ref{prop:n1}, $S_\theta^1$ is unitarily equivalent to $T_{M_\theta^1}: H^2_2 \rightarrow H^2_2$ with $M_\theta^1(z_2) = \frac{1}{2-\bar{z}_2}$ and  $S_\phi^1$ is unitarily equivalent to $T_{M_\phi^1}: H^2_2 \rightarrow H^2_2$  with $M_\phi^1(z_2) = \frac{1}{2-\bar{z}_2^2}.$ By Corollary \ref{cor:n1nr},
\[ W(S_\theta^1) =  \text{the convex hull of }M_\theta^1(\mathbb{D}) \ \text{ and }  \  W(S_\phi^1)  = \text{the convex hull of } M_\phi^1(\mathbb{D}).\] It is easy to see that these two sets are exactly the same.
\end{example}

Example \ref{ex:nrunique1} suggests that one might try to prove that $W(S^1_\theta) = W(S^1_\phi)$ implies $\theta$, $\phi$ can only differ by either inner factors in $z_2$ or composition by finite Blaschke products in $z_2$.  However, it turns out that RIFs can yield the same numerical ranges but have quite different formulas. Specifically, in the following example, we construct two RIFs $\phi$ and $\psi$ so that $\deg \phi = (2,2) = \deg \psi$ and for given decompositions of $\mathcal{K}_\phi$ and $\mathcal{K}_{\psi}$, the associated numerical ranges in both variables are the same:
\[ W(S_\phi^1) = W(S_\psi^1) \text{ and } W(S_\phi^2) = W(S_\psi^2),\]
but there is no obvious connection between  $\phi$ and $\psi$.

\begin{example} \label{ex:nonuniquest} In this example, we consider two classes of RIFs $\phi_t$ and $\psi_s$, parameterized by $s,t \in (0,1)$, and show that for particular values of $s,t$, their associated numerical ranges coincide. First, let $\theta_t(z) = \frac{z_1 z_2 -t}{1-t z_1z_2}$ and define 
\begin{equation} \label{eqn:exfunction1} \phi_t(z) : = \theta_t(z)^2  = \left( \frac{z_1 z_2 -t}{1-t z_1z_2}\right)^2.\end{equation}
Then $\phi_t$ is of the form $B(z_1 z_2)$, where $B$ is a finite Blaschke product, and thus falls into the setting of Example \ref{example:1_1prod}. As described in that example, we can decompose 
$\mathcal{K}_{\phi_t} = \mathcal{S}_1 \oplus \mathcal{S}_2$, where each $\mathcal{S}_j$ is $S_{z_j}$-invariant and orthonormal bases $\beta_1$ and $\beta_2$ of $\mathcal{S}_1 \ominus z_1 \mathcal{S}_1$ and $\mathcal{S}_2 \ominus z_2 \mathcal{S}_2$ respectively are given below:
\[ \beta_1 =\left \{ \frac{z_1 \sqrt{1-t^2}}{1-t z_1 z_2}, \theta_t  \frac{z_1 \sqrt{1-t^2}}{1-t z_1 z_2} \right \}  \ \text{ and } \ \beta_2 = \left \{ \frac{ \sqrt{1-t^2}}{1-t z_1 z_2}, \theta_t  \frac{ \sqrt{1-t^2}}{1-t z_1 z_2} \right \}.\] 
Then, as in Example \ref{example:1_1prod}, the matrix-valued functions $M^1_{\phi_t}$ and  $M^2_{\phi_t}$ with respect to these bases are given by:

\[ M^1_{\phi_t}(z_2)  =  \begin{bmatrix} t & 0 \\ (1-t^2)  & t  \end{bmatrix} \bar{z}_2 \ \ \text{ and } \ \ M^2_{\phi_t}(z_1) =  \begin{bmatrix} t & 0 \\ (1-t^2)  & t  \end{bmatrix} \bar{z}_1.\]
By Theorem \ref{thm:Mconstructive}, we know $W(S^j_{\phi_t} )= W(T_{M^j_{\phi_t}})$ for $j=1,2$. By the symmetry between $M^1_{\phi_t} $ and $M^2_{\phi_t}$, we can see that 
$W(S^1_{\phi_t}) = W(S^2_{\phi_t})$ and so, we just need to compute $W(S^1_{\phi_t})$. For ease of computation, we first compute the closure of $W(S^1_{\phi_t})$.  To that end, fix $\tau \in \mathbb{T}$ and note that since $M^1_{\phi_t}(\tau)$ is a $2\times 2$ matrix, we can use the elliptical range theorem to conclude that $W (M^1_{\phi_t}(\tau))$ equals $\overline{D\big ( \bar{\tau}t, \tfrac{1-t^2}{2}\big)},$ the closed disk with center $\bar{\tau} t$ and radius $\frac{1-t^2}{2}$. Then by \cite[Corollary $3.4$]{bg17},
\[
\begin{aligned}
 \text{Closure  of } W(S^1_{\phi_t}) &= \text{Convex hull of } \bigcup_{\tau \in \mathbb{T}} W (M^1_{\phi_t}(\tau)) \\
 & = \text{Convex hull of } \bigcup_{\tau \in \mathbb{T}}\overline{D\big ( \bar{\tau} t, \tfrac{1-t^2}{2}\big)}\\
 & = \text{Convex hull of } \bigcup_{\tau \in \mathbb{T}} \bar{\tau} \overline{D\big (t, \tfrac{1-t^2}{2}\big)},
\end{aligned}
\]
where $\overline{D(t, \frac{1-t^2}{2})}$ is the closed disk centered at $t$ with radius $\frac{1-t^2}{2}$. Since the set $\cup_{\tau \in \mathbb{T}} \bar{\tau} \overline{D(t, \frac{1-t^2}{2})}$ is invariant under rotation about the origin and this property is preserved under taking convex hulls, we can conclude that the closure of $W(S^1_{\phi_t})$ is a closed disk centered at the origin. By considering $\tau =1$, we can see that the radius of this disk is $t + \frac{1-t^2}{2}$. 

Now, we claim that $W(S^1_{\phi_t}) = D(0, t + \frac{1-t^2}{2})$, the open disk centered at $0$ with radius $t + \frac{1-t^2}{2}$. Since $W(S^1_{\phi_t})$ and $D(0, t + \frac{1-t^2}{2})$ are convex sets with the equal closures and  $D(0, t + \frac{1-t^2}{2})$ is open, well-known properties of convex sets (see, for example, \cite[Corollary 6.3.2]{RT70}) imply that
\[ D\big (0, t + \tfrac{1-t^2}{2} \big) \subseteq W(S^1_{\phi_t}).\]
For the other containment, we show that if $x \in W(S^1_{\phi_t}) = W(T_{M^1_{\phi_t}})$, then $|x| < t + \frac{1-t^2}{2}.$ To that end, fix $x \in W(S^1_{\phi_t})$, so that there is an $\vec{f} =  \begin{bmatrix} f_1 \\ f_2  \end{bmatrix}\in H_2^2 \oplus H^2_2$  with $\| \vec{f} \|^2 := \|f_1 \|_{H^2_2}^2 + \|f_2\|_{H^2_2}^2 =1$ such that
\[ 
\begin{aligned}
x &= \langle T_{M^1_{\phi_t}} \vec{f}, \vec{f} \rangle_{H^2_2 \oplus H^2_2} = \left \langle   \begin{bmatrix} t & 0 \\ (1-t^2)  & t  \end{bmatrix} \begin{bmatrix} f_1 \\ f_2  \end{bmatrix},  \begin{bmatrix} z_2 f_1 \\ z_2 f_2  \end{bmatrix} \right \rangle_{H^2_2 \oplus H^2_2} \\
& = t \langle f_1, z_2 f_1 \rangle_{H_2^2} + (1-t^2) \langle f_1, z_2 f_2 \rangle_{H^2_2} +t \langle f_2, z_2 f_2\rangle_{H^2_2}.
\end{aligned}
\]
If $f_1$ or $f_2$ is the zero function, then $| x | \le t$ and so $x \in D(0, t + \frac{1-t^2}{2})$. Otherwise, recall that the numerical range of $S_z$ on $H^2(\mathbb{D})$ is $\mathbb{D}$, see \cite{klein}. Then
$W(S_{z_2}|_{H^2_2}) = \mathbb{D}$ and since $t >0$, we have
\[ 
\begin{aligned}
|x| &\le t \| f_1 \|^2 \left | \left \langle \frac{f_1}{\|f _1\|}, z_2 \frac{f_1}{\|f _1\|} \right \rangle_{H^2_2} \right | + (1-t^2) \|f_1 \|  \| f_2\| + t \| f_2 \|^2 \left | \left \langle \frac{f_2}{\|f _2\|}, z_2 \frac{f_2}{\|f _2\|} \right \rangle_{H^2_2}\right |  \\
&< t \| f_1\|^2 + (1-t^2) \| f_1\| \sqrt{ 1- \|f_1\|^2} + t \| f_2\|^2  \\
& \le t +\tfrac{(1-t^2)}{2},
\end{aligned} 
\] 
as needed.  This means $W(S^1_{\phi_t})$ (and hence, $W(S^2_{\phi_t})$) equals  the open disk $D(0, t + \frac{1-t^2}{2})$.

For our second collection of functions, fix $s \in[0,1)$ and consider 
\begin{equation} \label{eqn:exfunction2} \psi_s(z) : = z_1 z_2\theta_s(z)  = z_1z_2\left( \frac{z_1 z_2 -s}{1-s z_1z_2}\right).\end{equation}
Again, this function is of the form $B(z_1 z_2)$, for $B$ a finite Blaschke product and thus, we can apply Example \ref{example:1_1prod} using $\theta_s$ as the first factor and $z_1z_2$ as the second. This allows us to decompose
$\mathcal{K}_{\psi_s} =  \widetilde{\mathcal{S}}_1\oplus   \widetilde{\mathcal{S}}_2$, where each $\widetilde{\mathcal{S}}_j$ is $S_{z_j}$-invariant and orthonormal bases $\widetilde{\beta}_1$ and $\widetilde{\beta}_2$ of $ \widetilde{\mathcal{S}}_1 \ominus z_1 \widetilde{\mathcal{S}}_1$ and $ \widetilde{\mathcal{S}}_2 \ominus z_2  \widetilde{\mathcal{S}}_2$ respectively are given below:
\[ \widetilde{\beta}_1 =\left \{ \frac{z_1 \sqrt{1-s^2}}{1-s z_1 z_2}, \theta_s z_1 \right \}  \ \text{ and } \ \widetilde{\beta}_2 = \left \{ \frac{ \sqrt{1-s^2}}{1-s z_1 z_2}, \theta_s \right \}.\] 
Then, as in Example \ref{example:1_1prod}, the matrix-valued functions $M^1_{\psi_s}$ and  $M^2_{\psi_s}$ with respect to these bases are given by:
\[ M^1_{\psi_s} =  \begin{bmatrix} s & 0 \\ \sqrt{1-s^2}  & 0  \end{bmatrix} \bar{z}_2 \ \ \text{ and } \ \ M^2_{\psi_s} =  \begin{bmatrix} s & 0 \\ \sqrt{1-s^2}  & 0  \end{bmatrix} \bar{z}_1.\]
Thus, we have $W(S_{\psi_s}^1) = W(S_{\psi_s}^2)$. To compute $W(S_{\psi_s}^1)$, fix  $\vec{f} =  \begin{bmatrix} f_1 \\ f_2  \end{bmatrix}\in H_2^2 \oplus H_2^2$  with $\| \vec{f} \|^2 = \|f_1 \|_{H^2_2}^2 + \|f_2\|_{H^2_2}^2 =1$. Then 
\[ \begin{aligned}  \langle T_{M^1_{\psi_s}} \vec{f}, \vec{f} \rangle_{H_2^2 \oplus H_2^2}  &= \left \langle   \begin{bmatrix} s & 0 \\  \sqrt{1-s^2}  & 0  \end{bmatrix} \begin{bmatrix} f_1 \\ f_2  \end{bmatrix},  \begin{bmatrix} z_2 f_1 \\ z_2 f_2  \end{bmatrix} \right \rangle_{H_2^2 \oplus H_2^2} \\
&= s \langle f_1, z_2 f_1 \rangle_{H^2_2} + \sqrt{1-s^2} \langle f_1, z_2 f_2 \rangle_{H^2_2} \\
& = \sqrt{1-s^2}  \| f_1 \| \| f_2 \| w +s \| f_1\|^2 \zeta  \\
& = xw\sqrt{1-s^2}\sqrt{1-x^2} +s x^2 \zeta,
\end{aligned}
\]
where $x = \| f_1 \|_{H_2^2} \in [0,1]$ and (with a slight adjustment  of the formulas if $f_1$ or $f_2$ are equivalently zero),
\[   w = \frac{\langle f_1, z_2 f_2 \rangle_{H_2^2} }{ \| f_1 \|_{H_2^2}  \| f_2 \|_{H_2^2}}  \in \overline{\mathbb{D}} \ \text{ and } \ \zeta = \left \langle \frac{f_1}{\| f_1 \|_{H_2^2}}, z_2\frac{f_1}{\| f_1 \|_{H_2^2}} \right \rangle_{H_2^2} \in W(S_{z_2}|_{H^2_2})= \mathbb{D}.\]
 If $f_1\equiv 0$, then $ \langle T_{M^1_{\psi_s}} \vec{f}, \vec{f} \rangle =0$. If $f_2 \equiv 0$ so $\|f_1 \| =1$,  then $ \langle T_{M^1_{\psi_s}} \vec{f}, \vec{f} \rangle =s x^2 \zeta$ and ranging over all such $f_1$ causes $\zeta$ to range over  all of $\mathbb{D}$. Meanwhile, allowing $f_1$ and $f_2$ to range over all pairs of functions with both $f_1, f_2 \not \equiv 0$ in $H_2^2$ and $\| \vec{f}\|=1$ exactly causes $w$ and $\zeta$ to range over $\overline{\mathbb{D}}$ and $\mathbb{D}$ respectively and $x$ to cover the range $(0,1)$. Thus, we have 
\[ W(T_{M^1_{\psi_s}}) = \left \{ xw\sqrt{1-s^2}\sqrt{1-x^2} +s x^2 \zeta: x \in [0,1], w \in \overline{\mathbb{D}}, \zeta \in \mathbb{D}. \right\}.\]
This characterization implies that if $\alpha \in W(T_{M^1_{\psi_s}})$, then $e^{i r} \alpha \in W(T_{M^1_{\psi_s}})$ for all $r\in [0, 2\pi)$. Thus, $W(T_{M^1_{\psi_s}})$ is a disk centered at the origin. The radius of that circle is 
\[
\begin{aligned}
 w(T_{M^1_{\psi_s}}) &= \sup \left \{ |\alpha |: \alpha \in  W(T_{M^1_{\psi_s}}) \right \} \\
&= \max \left \{ x\sqrt{1-s^2}\sqrt{1-x^2} +s x^2 : x \in [0,1] \right \} \\
& = \tfrac{ s+1}{2},
\end{aligned}
\]
where the last step is a calculus computation.
Because we only allow $\zeta \in \mathbb{D}$ and $s >0$, one can see that the numerical radius is never actually attained by a point in the numerical range. Thus $W(S^1_{\psi_s})=W(T_{M^1_{\psi_s}})$ is the open disk centered at the origin with radius $\frac{s+1}{2}$. 

To summarize, for all $s, t \in (0, 1)$, we have 
\[ \begin{aligned}
W(S^1_{\phi_t}) &= W(S^2_{\phi_t})= D\big (0, t + \tfrac{1-t^2}{2} \big), \\
W(S^1_{\psi_s}) &= W(S^2_{\psi_s}) = D\big(0, \tfrac{s+1}{2} \big). 
\end{aligned}
\]
Fix $t \in (0,1)$. Setting $t + \tfrac{1-t^2}{2} = \tfrac{s+1}{2}$ gives $s= t (2-t) \in (0,1)$. Thus, the functions 
\[ \phi_t(z) =  \left( \frac{z_1 z_2 -t}{1-t z_1z_2}\right)^2 \ \ \text{ and } \  \ \psi_{t(2-t)}(z) =  z_1z_2\left( \frac{z_1 z_2 - t(2-t)}{1-t(2-t) z_1z_2}\right) \]
have degree $(2,2)$ and identical numerical ranges $W(S^1_{\phi_t}) = W(S^1_{\psi_{t(2-t)}})$ and  $W(S^2_{\phi_t}) = W(S^2_{\psi_{t(2-t)}})$, but there is no obvious relationship (at least to us) between the two functions.
\end{example}


\section{Open and closed numerical ranges} \label{sec:open}

In this section, we investigate Question $3$, which asks when the numerical ranges $W(S_\theta^j)$ are open and when they are closed. In Remark \ref{rem:open}, we saw that if $\theta$ has degree $(1,n)$, then there are two cases: if  $\theta$ is the product of a degree $1$ Blaschke product in $z_1$ and a degree $n$ Blaschke product in $z_2$, then $W(S_\theta^1)$ is closed. Otherwise,  $W(S_\theta^1)$ is open.

 This suggests the following somewhat speculative conjecture:

\begin{conjecture}  \label{conj:open} Let $\theta =\lambda \frac{\tilde{p}}{p}$ be an RIF on $\mathbb{D}^2$. If $\theta$ does not factor as $B_1(z_1) B_2(z_2)$, for finite Blaschke products $B_1$ and $B_2$, then $W(S_\theta^1)$ and $W(S_\theta^2)$ are open.  If $\theta$ does factor as $B_1(z_1) B_2(z_2)$, then  $W(S_\theta^1)$ and $W(S_\theta^2)$ are closed. 
\end{conjecture}

As shown in the following proposition, the factorization condition in Conjecture \ref{conj:open} implies that there is at least one decomposition of $\mathcal{K}_\theta$ where the resulting $W(S_\theta^1)$ is closed. 

\begin{proposition} \label{prop:open} Let $\theta =  B_1(z_1) B_2(z_2)$ for finite Blaschke products $B_1, B_2$. Then there is a decomposition of $\mathcal{K}_\theta =\mathcal{S}_1 \oplus \mathcal{S}_2$ with $W(S_\theta^1)$ closed. 
\end{proposition}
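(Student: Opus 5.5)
We will build the decomposition from the product structure, using Proposition \ref{prop:productdecomp} and Theorem \ref{thm:productM} with $B_1(z_1)$ as the first factor and $B_2(z_2)$ as the second. If $\deg B_1 = 0$ the statement is trivial: then $m=0$, $\mathcal{S}_2=\{0\}$ and $W(S_\theta^1)=\emptyset$, which is closed. So assume $m:=\deg B_1\ge 1$ and write $n:=\deg B_2$.

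For the first factor, Example \ref{example:1_0prod} gives a decomposition of $\mathcal{K}_{B_1}$ in which $\mathcal{S}_2^{B_1}=\mathcal{K}_{B_1}$. Its Takenaka--Malmquist--Walsh basis $\beta_2^{B_1}=\{f_1^1,\dots,f_m^1\}$ consists of functions of $z_1$ alone, and $M^1_{B_1}$ is the constant scalar matrix $M_{B_1}^T$.

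For the second factor $B_2(z_2)$, the degree is $(0,n)$, so $\mathcal{S}_2^{B_2}=\{0\}$ and $\mathcal{S}_1^{B_2}=\mathcal{K}_{B_2}$. Proposition \ref{prop:productdecomp} as stated assumes each $m_t\ge 1$. I would check that its orthogonality argument still works when $m_2=0$: the product $\mathcal{K}_\theta=\mathcal{K}_{B_1}\oplus B_1\mathcal{K}_{B_2}$ is standard. This produces $\mathcal{S}_2=\mathcal{K}_{B_1}$ (as a subspace of $H^2(\mathbb{D}^2)$) and $\mathcal{S}_1=B_1\mathcal{K}_{B_2}$, where $\mathcal{S}_1$ is clearly $z_1$-invariant. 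Here $\mathcal{K}_{B_1}$ in two variables means $K_{B_1}(z_1)\otimes H^2_2$, which is $z_2$-invariant. The basis $\beta_2=\{f_1^1,\dots,f_m^1\}$ of $\mathcal{S}_2\ominus z_2\mathcal{S}_2$ is unchanged.

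Next we compute the symbol directly via Theorem \ref{thm:Mconstructive}. Since each $f_i^1$ depends only on $z_1$, $S_{z_1}^*f_i^1$ lies in the one-variable model space $K_{B_1}$ and is a constant linear combination of the $f_j^1$. Hence the $h_{ij}$ are constants, and $M^1_\theta\equiv M$ is a constant $m\times m$ matrix, unitarily similar to $M_{B_1}$ (indeed $M=M_{B_1}^T$ up to the basis convention). Then $T_{M^1_\theta}$ acts on $H^2_2(\mathbb{D})^m=\mathbb{C}^m\otimes H^2_2$ as $M\otimes I$.

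It remains to show that $W(M\otimes I)=W(M)$, which is a compact convex set. The inclusion $W(M)\subseteq W(M\otimes I)$ follows by taking $\vec f=v\cdot 1$ with $v$ a unit vector. For the reverse inclusion, write $\vec f=\sum_k v_k z_2^k$ with $\sum_k\|v_k\|^2=1$. Then
\[\langle (M\otimes I)\vec f,\vec f\rangle=\sum_k \langle Mv_k,v_k\rangle,\]
which is a convex combination of points of $W(M)$ and therefore lies in $W(M)$ by the Toeplitz--Hausdorff theorem. Since $S_\theta^1$ is unitarily equivalent to $T_{M^1_\theta}$, we conclude that $W(S_\theta^1)=W(M)$ is closed.

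The main obstacle is justifying the decomposition when the second factor has $m_2=0$, because Proposition \ref{prop:productdecomp} is stated only for $m_t\ge1$. I would verify it directly: $\mathcal{K}_\theta=\mathcal{K}_{B_1}\oplus B_1\mathcal{K}_{B_2}$, the two summands have the required invariance, and the first summand equals $\mathcal{S}_2$.
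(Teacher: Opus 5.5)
Your proof is correct and follows the paper's argument. You use the same decomposition $\mathcal{S}_2=\mathcal{K}_{B_1}$, $\mathcal{S}_1=B_1\mathcal{K}_{B_2}$ and a Takenaka--Malmquist--Walsh basis, which gives a constant symbol unitarily similar to $M_{B_1}$, and you conclude $W(T_{M^1_\theta})=W(M^1_\theta)$ as the paper does. There are two small differences. First, you get orthogonality from the standard splitting $\mathcal{K}_{B_1B_2}=\mathcal{K}_{B_1}\oplus B_1\mathcal{K}_{B_2}$ instead of the paper's reproducing-kernel and $\mathcal{S}_1\cap\mathcal{S}_2=\{0\}$ argument. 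Second, you actually prove $W(M\otimes I)=W(M)$, which the paper only asserts; in that step, $\sum_k\langle Mv_k,v_k\rangle$ can be an infinite convex combination, so it is the compactness of $W(M)$, not Toeplitz--Hausdorff alone, that places it in $W(M)$.
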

\begin{proof} If $\deg_1(\theta)=0$, then our remarks after Theorem \ref{thm:Mconstructive} imply that $W(S_\theta^1) =\emptyset$ and thus, is closed. 

Now assume $\deg_1(\theta) \ge 1$. By examining reproducing kernels, we can decompose $\mathcal{K}_{\theta} = \mathcal{K}_{B_1} + B_1  \mathcal{K}_{B_2}$ and define
\[
\begin{aligned}
\mathcal{S}_1 &:=   B_1  \mathcal{K}_{B_2} = \mathcal{H}\left( \tfrac{B_1(z_1) \overline{B_1(w_1)}(1-B_2(z_2) \overline{B_2(z_2)})}{(1-z_1 \bar{w}_1)(1-z_2 \bar{w}_2)} \right),\\
\mathcal{S}_2 &:= \mathcal{K}_{B_1} = \mathcal{H}\left( \tfrac{1-B_1(z_1) \overline{B_1(w_1)}}{(1-z_1 \bar{w}_1)(1-z_2 \bar{w}_2)} \right).
\end{aligned}
\]
Since $\mathcal{S}_1  \cap \mathcal{S}_2 = \{0\}$, the discussion in Section \ref{sec:2var} implies that we have an orthogonal decomposition $\mathcal{K}_{\theta} =\mathcal{S}_1 \oplus \mathcal{S}_2$ with each $\mathcal{S}_j$ being $z_j$-invariant and   $\mathcal{S}_2 \ominus z_2 \mathcal{S}_2 =  \mathcal{H}\left( \frac{1-B_1(z_1) \overline{B_1(z)}}{1-z_1 \bar{w}_1} \right),$ the one-variable model space associated to $B_1$.

As in \cite[Example 3.3]{bg17}, we can choose a nice basis for this one-variable model space and obtain a matrix symbol $M^1_\theta$ that agrees with the constant matrix $M_{B_1}$ given in \eqref{eqn:SB}. Since $M_{B_1}$ is a constant matrix, $W(T_{M_{\theta}^1}) = W(T_{M_{B_1}}) = W(M_{B_1})$ is closed and hence, $W(S^1_\theta)$ is closed. 
\end{proof}

Unfortunately, the factorization condition in Conjecture \ref{conj:open} does not imply that the $M^1_\theta$ associated with every $\mathcal{K}_\theta$ decomposition is constant. For example, consider the following:

\begin{example}\label{ex:open1} Let $\theta(z) = z_1 z_2$ and $\Theta(z) = z_1 \theta(z)$. Then by Lemma \ref{lemma:1_1}, we can decompose $\mathcal{K}_\theta = \mathcal{S}_1^\theta \oplus \mathcal{S}_2^\theta$ where each $\mathcal{S}_j^\theta$ is $S_{z_j}$-invariant and $\{ z_2\}$ is an orthonormal basis of $\mathcal{S}_2^\theta \ominus z_2 \mathcal{S}_2^\theta$. 
Given that,  Example \ref{example:z_1product} implies that we can decompose $\mathcal{K}_\Theta = \mathcal{S}_1 \oplus \mathcal{S}_2$, where each $\mathcal{S}_j$ is $S_{z_j}$-invariant and $\{ 1, z_1 z_2\}$ is an orthonormal basis of $\mathcal{S}_2 \ominus z_2 \mathcal{S}_2$. 
Then the formula in Example \ref{example:z_1product} (or direct computation) gives the associated matrix $M_\Theta^1$: 
\[M^1_{\Theta}=\begin{bmatrix}0 & 0 \\ \bar{z}_2 & 0 \end{bmatrix}.\]
Although $M^1_{\Theta}$ is not constant, one can show that the numerical range of $T_{M^1_\Theta}: H^2_2 \rightarrow H^2_2$ is the closed Euclidean disk centered at the origin with radius $\tfrac{1}{2}$. This is implied by \cite[Theorem 2]{bs}, but we can also prove it directly. To identify the elements in $W(T_{M^1_{\Theta}})$, fix any
\[ \vec{f} = \begin{bmatrix} f_1 \\ f_2 \end{bmatrix}  \text{ with each } f_1, f_2 \in H^2_2\ \text{ and } \ \| f_1\|_{H^2_2}^2 + \|f_2\|_{H^2_2}^2 =1.\]
Then 
\begin{equation} \label{eqn:ex7:nr}
\left \langle T_{M^1_{\Theta}} \vec{f} , \vec{f} \right \rangle_{H^2_2 \oplus H^2_2} =   \left \langle f_1, z_2f_2 \right \rangle_{H^2_2}.
\end{equation}
By replacing $f_1$ with $e^{i\alpha} f_1$, one can check that the set of points in \eqref{eqn:ex7:nr} is closed under rotation. Thus, $W(T_{M^1_{\Theta}})$ must be a disk centered at the origin. Moreover each point satisfies
\[ \left | \left\langle T_{M^1_{\Theta}} \vec{f} , \vec{f} \right \rangle_{H^2_2 \oplus H^2_2} \right|  =\left | \left \langle f_1, z_2f_2 \right \rangle_{H^2_2} \right | \le \| f_1 \|_{H^2_2}\cdot \| f_2 \|_{H^2_2} \le \tfrac{1}{2}.\]
Since $\tfrac{1}{2}$ can be obtained by setting $f_1 = \frac{1}{\sqrt{2}} z_2$, $f_2 = \frac{1}{\sqrt{2}}$, we can see that  $W(T_{M^1_{\Theta}})$  is the closed Euclidean disk centered at the origin with radius $\frac{1}{2}$. Thus, $W(S^1_{\Theta})$ is closed even though $M^1_\Theta$ is nonconstant. This example supports Conjecture \ref{conj:open} but shows that proving it might be challenging. 
\end{example} 

A full answer to Conjecture \ref{conj:open} is beyond the scope of this paper.  However, we do have the following partial result, which uses some ideas from \cite{klein}. 

\begin{theorem}  \label{thm:open} Let $\theta =\lambda \frac{\tilde{p}}{p}$ be an RIF on $\mathbb{D}^2$ with $\deg_1(\theta) \ge 1$. If there is no single straight line that intersects the numerical ranges $W(M^1_{\theta}(\tau))$ for all $\tau \in \mathbb{T}$, then $W(S_\theta^1)$ must be open.
\end{theorem}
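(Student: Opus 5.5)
By Theorem \ref{thm:Mconstructive}, $W(S_\theta^1) = W(T_{M})$ with $M := M^1_\theta$. By \cite[Corollary 3.4]{bg17}, the closure of this numerical range is $K := \operatorname{conv}\bigcup_{\tau\in\mathbb{T}} W(M(\tau))$. Since $W(T_M)$ is convex, it is open exactly when it contains no point of $\partial K$. So I would argue by contradiction: assume some $x_0 \in W(T_M)\cap \partial K$. Since $K$ is a compact convex set, there is a supporting line through $x_0$. After rotating and translating (replacing $M$ by $e^{i\alpha}(M - x_0 I)$, which rotates and translates every $W(M(\tau))$ and $W(T_M)$ in the same way), I may take this line to be the real axis, with $x_0 = 0$ and $K \subseteq \{\operatorname{Im} z \ge 0\}$.

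Next, take a unit vector $\vec f \in H^2_2(\mathbb{D})^m$ with $\langle T_M \vec f, \vec f\rangle = 0$. Since $\vec f$ lies in $H^2_2(\mathbb{D})^m$, the projection in $T_M$ can be dropped, giving
\[ 0 = \operatorname{Im}\langle M\vec f,\vec f\rangle_{L^2} = \int_{\mathbb{T}} \langle \operatorname{Im} M(\tau)\, \vec f(\tau), \vec f(\tau)\rangle_{\mathbb{C}^m}\, dm(\tau), \]
where $\operatorname{Im} M(\tau) = (M(\tau)-M(\tau)^*)/2i$. Because $W(M(\tau)) \subseteq K \subseteq$ the closed upper half plane, $\operatorname{Im} M(\tau) \ge 0$ for every $\tau$, so the integrand is nonnegative. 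Hence $\langle \operatorname{Im}M(\tau)\vec f(\tau),\vec f(\tau)\rangle = 0$ for a.e. $\tau$. This is the idea borrowed from \cite{klein}.

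The key step is to turn this pointwise vanishing into a contradiction with the hypothesis. The function $\vec f$ is a nonzero vector-valued $H^2$ function, so $\vec f(\tau) \neq 0$ for a.e. $\tau \in \mathbb{T}$: each nonzero component of an $H^2$ function vanishes only on a null set of the circle. For such $\tau$, the vector $v = \vec f(\tau)/\|\vec f(\tau)\|$ satisfies $\operatorname{Im}\langle M(\tau)v,v\rangle = 0$, so $W(M(\tau))$ meets the real axis for a.e. $\tau$. Each $W(M(\tau))$ is compact, and the entries of $M$ are continuous on $\mathbb{T}$ (Theorem \ref{thm:M}), so $\tau\mapsto \min \sigma(\operatorname{Im}M(\tau))$ is continuous. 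This minimum is $\ge 0$ everywhere and equals $0$ on a set of full measure, hence on a dense set, and therefore everywhere. Since $\operatorname{Im}M(\tau)\ge0$, a zero minimal eigenvalue means $W(M(\tau))$ touches the real axis for every $\tau\in\mathbb{T}$. Thus the real axis, a single straight line, intersects all $W(M^1_\theta(\tau))$, contradicting the hypothesis. So $W(S^1_\theta)$ contains no boundary point of its closure and is open.

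I expect the main care to be needed in two places: justifying that $\langle T_M\vec f,\vec f\rangle = \langle M\vec f,\vec f\rangle_{L^2}$ and that $\vec f(\tau)\ne0$ almost everywhere, which are both standard Hardy space facts, and passing from "almost every $\tau$" to "every $\tau$" via continuity, which relies on $M^1_\theta$ being continuous on $\mathbb{T}$. I would also note that the rotation and translation normalization preserves both the hypothesis and the conclusion.
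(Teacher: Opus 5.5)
Your proposal is correct and follows essentially the same route as the paper. Both arguments normalize a point of $W(T_{M^1_\theta})\cap\partial\overline{W(T_{M^1_\theta})}$ to $0$ with the numerical range in the closed upper half-plane, write $\text{Im}\langle T_F\vec g,\vec g\rangle$ as $\int_{\mathbb{T}}\langle \text{Im}F(\tau)\vec g(\tau),\vec g(\tau)\rangle\,dm(\tau)$, and finish with the a.e.\ nonvanishing of $H^2$ boundary values and the continuity of $M^1_\theta$ on $\mathbb{T}$. The only difference is how you get $\text{Im}\,M(\tau)\ge 0$ for every $\tau$: you cite the inclusion $W(M^1_\theta(\tau))\subseteq\overline{W(S^1_\theta)}$ from \cite[Corollary 3.4]{bg17}, whereas the paper proves this pointwise positivity directly by testing against the localizing functions $\vec x\,\frac{1}{z-(1+\epsilon)\tau_0}$.
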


\begin{proof} We prove this via contrapositive. Specifically, we will assume $W(S_\theta^1) = W(T_{M^1_\theta})$  is not open and show that there are $\beta, \alpha \in \mathbb{R}$ such that $\beta \in \text{Im} \left (e^{i\alpha} W(M^1_\theta(\tau)\right))$ for each $\tau \in \mathbb{T}$.  Then $y = \beta$ intersects each set $e^{i\alpha} W(M^1_\theta(\tau))$. Equivalently, rotating $y=\beta$ by $-\alpha$ radians implies that that line $\cos(\alpha) y + \sin(\alpha) x = \beta$ intersects each $M^1_\theta(\tau)$.

Assume that $W(T_{M^1_\theta})$  is not open and let $b$ be a point that is in $W(T_{M^1_\theta}) \cap \partial W(T_{M^1_\theta})$. Let $m$ be the dimension of $M^1_\theta$ and let $I_m$ denote the $m\times m$ identity matrix. Then $ 0 \in W(T_{M^1_\theta - bI_m}) \cap \partial W(T_{M^1_\theta - bI_m})$. Because $W(T_{M^1_\theta - bI_m})$ is convex, there is an angle $\alpha$ so that if we rotate $W(T_{M^1_\theta - bI_m})$ by $\alpha$, the new set is fully contained in closed upper-half-plane $ \{ x+iy \in \mathbb{C}: y \ge 0\}$. Equivalently, 
\[ \text{Im}\left(W(T_{ e^{i \alpha} M^1_\theta -  e^{i \alpha} bI_m})\right) = \text{Im} \left( e^{i \alpha} W(T_{M^1_\theta - bI_m})\right) \subseteq [0, \infty)\]
and contains $0$. For simplicity, define the function
\[F_\theta: = e^{i \alpha} M^1_\theta -  e^{i \alpha} bI_m,\]
a continuous matrix-valued function on $\overline{\mathbb{D}}$. Then $\text{Im}\left(W(T_{F_\theta})\right)  \subseteq [0, \infty)$. Thus, for all $\vec{g} \in H^2_2(\mathbb{D})^m$, 
\[ 
\begin{aligned}
0 \le \text{Im} \left( \left \langle T_{F_\theta} \vec{g}, \vec{g} \right \rangle_{H^2_2(\mathbb{D})^m} \right)  
 &= \int_{\mathbb{T}} \text{Im}\left( \left \langle F_\theta(\tau) \vec{g}(\tau), \vec{g}(\tau) \right \rangle_{\mathbb{C}^m} \right) \ d m (\tau) \\
 & = \int_{\mathbb{T}}\left \langle  \text{Im}\left( F_\theta(\tau)\right) \vec{g}(\tau), \vec{g}(\tau) \right \rangle_{\mathbb{C}^m} \ d m (\tau),
\end{aligned}
\]
where $dm$ is normalized Lebesgue measure on $\mathbb{T}$ and $\text{Im}\left( F_\theta(\tau)\right) = \frac{1}{2i} \left( F_\theta(\tau) - F_\theta(\tau)^*\right).$ We claim that the above inequality implies that $ \text{Im}\left( F_\theta (\tau)\right)$ must be a positive semi-definite matrix for every $\tau \in \mathbb{T}$. Otherwise,  if there was a $\tau_0 \in \mathbb{T}$ and a vector $\vec{x} \in \mathbb{C}^m$ such that 
\[  \left \langle  \text{Im}\left( F_\theta(\tau_0)\right) \vec{x}, \vec{x} \right \rangle_{\mathbb{C}^m} <0,\]
then we could find a vector-valued function $\vec{G} \in H^2_2(\mathbb{D})^m$ such that 
\[  \int_{\mathbb{T}}\left \langle  \text{Im}\left( F_\theta(\tau)\right) \vec{G}(\tau), \vec{G}(\tau) \right \rangle_{\mathbb{C}^m} \ d m (\tau) <0,\]
a contradiction. For example, one could take $\vec{G}(z) = \vec{x} \frac{1}{z - (1+ \epsilon) \tau_0}$ for sufficiently small $\epsilon>0$.

Thus,  $ \text{Im}\left( F_\theta (\tau)\right)$ must be a positive semi-definite matrix for every $\tau \in \mathbb{T}$. Recall that $0 \in W(T_{F_\theta})$. Thus, there exists $\vec{g}_0 \in H^2_2(\mathbb{D})^m$ with $\| \vec{g}_0 \| =1$ and 
\[ 0 = \text{Im}\left(  \left \langle T_{F_\theta} \vec{g}_0, \vec{g}_0 \right \rangle_{H^2_2(\mathbb{D})^m} \right)=  \int_{\mathbb{T}}\left \langle  \text{Im}\left( F_\theta(\tau)\right) \vec{g}_0(\tau), \vec{g}_0(\tau) \right \rangle_{\mathbb{C}^m} \ d m(\tau). \]
Our earlier arguments imply that 
\[ \left \langle \text{Im}\left( F_\theta(\tau)\right) \vec{g}_0(\tau), \vec{g}_0(\tau) \right \rangle_{\mathbb{C}^m} \ge 0\] 
for all $\tau \in \mathbb{T}$ where $g_0$ is defined and so, it must be the case that 
\[ \left \langle \text{Im}\left( F_\theta(\tau)\right) \vec{g}_0(\tau), \vec{g}_0(\tau) \right \rangle_{\mathbb{C}^m} = 0\] 
for almost every $\tau \in \mathbb{T}$.  Because $\vec{g}_0$ is an element in $H^2_2(\mathbb{D})^m$  that is not identically $\vec{0}$, its boundary values cannot equal $\vec{0}$ on a set of positive measure in $\mathbb{T}$. This implies that 
\[ 0 \in W(\text{Im} ( F_\theta(\tau)) ),\]
for almost every $\tau \in \mathbb{T}$ and since $F_\theta$ is continuous on $\mathbb{T}$, this implies that $0 \in W(\text{Im} ( F_\theta(\tau)))$ for every $\tau \in \mathbb{T}$. Then, using properties of the numerical range and the definition of $F_\theta$, we have 
\[ 0 \in W(\text{Im} ( F_\theta(\tau)))  = \text{Im}\left( W( e^{i \alpha} M^1_\theta -  e^{i \alpha} bI_m)\right) = \text{Im} \left(e^{i\alpha} W(M^1_\theta(\tau))\right) - \text{Im}\left(e^{i\alpha} b\right).\]
Setting $\beta =    \text{Im}\left(e^{i\alpha} b\right)$, this implies $\beta \in \text{Im} \left(e^{i\alpha} W(M^1_\theta(\tau)\right)$ for each $\tau \in \mathbb{T}$ as needed. Combined with our earlier analysis, this proves the claim. 
\end{proof}

We illustrate this theorem with an example:

\begin{example} \label{ex:open2}
Fix $t \in (0,1).$ Let $\theta_t(z) = \frac{z_1 z_2 - t}{1 - t z_1 z_2}$ and define $$\phi_t(z):=\theta_t(z)^2=\bigg(\frac{z_1 z_2 - t}{1 - t z_1 z_2}\bigg)^2$$ an RIF as in Example \ref{ex:nonuniquest}. We will use Theorem \ref{thm:open} to show that for $t>\frac{\sqrt{5}-1}{2},$ $W(S_\theta^1)$ must be open.  
Specifically, we will show that for such $t$-values, there is no single straight line that intersects all numerical ranges $W(M^1_{\theta}(\tau))$ for $\tau=1, -1, i.$ By Example \ref{ex:nonuniquest}, each $W (M^1_{\phi_t}(\tau))=\bar{\tau} \overline{D\big (t, \tfrac{1-t^2}{2}\big)},$ the closed disk centered at $\bar{\tau}t$ and radius $\frac{1-t^2}{2},$ and so equivalently, we will show that there is no single straight line that intersects all of 
$$D_1:=\overline{D_1 (t, r)}, \; D_2:=\overline{D_2 (-t, r)}\;\; \text{and} \;\;  D_3:= \overline{D_3 (-it, r)},$$ where $r=\frac{1-t^2}{2}.$ Let $p_1$ be a point on the boundary of the disk $D_1$ given by $$p_1=(t, 0)+r(\cos\delta_1, \sin\delta_1)=(t+r\cos\delta_1, \sin\delta_1),$$ $p_2$ be a point on the boundary of the disk $D_2$ given by $$p_2=(-t, 0)+r(\cos\gamma_1, \sin\gamma_1)=(-t+r\cos\gamma_1, r\sin\gamma_1)$$ and $p_3$ a point on the boundary of the disk $D_3$ given by $$p_3=(0, -t)+r(\cos\gamma_2, \sin\gamma_2)=(r\cos\gamma_2,-t+r\sin\gamma_2 ).$$ The slope from $D_2$ to $D_3$ is given by $$s_{23}=\frac{-t+r(\sin\gamma_2-\sin\gamma_1)}{t+r(\cos\gamma_2-\cos\gamma_1)},$$ where $$-t+r(\sin\gamma_2-\sin\gamma_1)\in [-t-2r, -t+2r]\; \text{and}\; t+r(\cos\gamma_2-\cos\gamma_1)\in [t-2r, t+2r].$$ If $t>\frac{\sqrt{5}-1}{2},$ then $t^2+t-1=\left(t-\frac{\sqrt{5}-1}{2}\right)\left(t+\frac{\sqrt{5}+1}{2}\right)>0.$ Then $-t+2r=-t^2-t+1<0$ and $t-2r=t^2+t-1>0,$ which implies $s_{23}<0.$ Similarly, we obtain a positive slope $$s_{13}=\frac{-t+r(\sin\delta_2-\sin\delta_1)}{-t+r(\cos\delta_2-\cos\delta_1)}$$ from $D_1$ to $D_3$ for $t>\frac{\sqrt{5}-1}{2}.$ Therefore, for $t>\frac{\sqrt{5}-1}{2},$ the lines from $D_2$ to $D_3$ have a negative slope, while those from $D_1$ to $D_3$ have a positive slope. This forces a contradiction, that is, for $t>\frac{\sqrt{5}-1}{2},$ no single line can intersect all three disks arranged as stated and by Theorem \ref{thm:open}, $W(S_\theta^1)$ must be open.
\end{example}

\end{document}